\newtheorem{theorem}{Theorem}[section]
\newtheorem{hypothesis}[theorem]{Hypothesis}
\newtheorem{definition}[theorem]{Definition}
\newtheorem{lemma}[theorem]{Lemma}
\newtheorem{proposition}[theorem]{Proposition}
\newtheorem{corollary}[theorem]{Corollary}
\theoremstyle{definition}
\newtheorem{example}[theorem]{Example}
\newtheorem{question}{Question}
\def\D{{\mathscr{D}}}
\def\PP{{\mathscr{P}}}
\def\B{{\mathscr{B}}}
\def\C{{\mathscr{C}}}
\def\I{\mathscr{I}}
\def\Sym{{\mathrm{Sym}}}
\def\A{\mathscr{A}}
\def\CC{\bm{\mathscr{C}}}
\def\Bdy{\partial}
\def\c{{\rm c}}
\newcommand{\ff[1]}{F^{(#1)}}
\newcommand{\cc[1]}{\CC^{(#1)}}
\definecolor{amber}{rgb}{1.0, 0.7, 0.0}
\title{Block-transitive designs with a poset of imprimitive partitions}
\author{Carmen Amarra$^{\rm\lowercase{b}}$}
\author{Alice Devillers$^{\rm\lowercase{a,\ast}}$ }
\author{Cheryl E. Praeger$^{\rm \lowercase{a}}$}
\address{$^{\rm\lowercase{a}}$Centre for the Mathematics of Symmetry and Computation, The University of Western Australia, 35 Stirling Highway, Crawley, Western Australia 6009, Australia}
\email{alice.devillers@uwa.edu.au, cheryl.praeger@uwa.edu.au}
\address{$^{\rm\lowercase{b}}$ Institute of Mathematics, University of the Philippines Diliman, C.P. Garcia Avenue, Quezon City 1101, Philippines}
\email{mcamarra@math.upd.edu.ph}
\address{$^{\ast}$Corresponding author}
\date{\today}
\begin{document}

\begin{abstract}

    We study block designs which admit an automorphism group that is transitive on blocks and points, and leaves invariant every partition in a given finite poset of partitions of the point set. The full stabiliser $G$  of all the partitions in the poset is a generalised wreath product. We use the theory of generalised wreath products to give necessary and sufficient conditions, in terms of  the `array' of a point-subset $B$, for the set of $G$-images of $B$ to form the block-set of a $G$-block-transitive $2$-design. This generalises  previous results for the special cases where the poset is a chain or an anti-chain. We also give explicit infinite families of examples of $2$-designs for each poset involving three proper partitions, and for the famous $N$-poset with four partitions.  (Posets with two proper partitions have been treated previously.) This suggests the problem of finding explicit examples for other posets.

\bigskip
\noindent{\bf Key words:} block-transitive designs, point-imprimitive designs, posets, wreath products of groups, generalised wreath products\\
\noindent{\bf 2000 Mathematics subject classification:} 05B05, 05C25, 20B25\\
\noindent{This work forms part of the Australian Research Council Discovery Grant project  
DP200100080. }
    
\end{abstract}

\maketitle

\section{Introduction} \label{s:intro}

In a designed statistical experiment, the experimental units are sometimes partitioned according to different factors that may influence the observations, such that in the case of two or more factors the corresponding set of partitions is a partially ordered set under refinement. A class of such posets was described by Nelder in \cite{N}, and other examples by Throckmorton in \cite{Thr}.
We study block designs which admit an automorphism group $G$ that is transitive on blocks and preserves a given poset $(\CC^*,\preccurlyeq)$ of partitions of the point set, that is, $G$ leaves invariant every partition $\C \in \CC^*$.

\begin{definition}\label{d:Iimp}
Given a partially ordered set $\I = (I,\preccurlyeq)$ and a nonempty set $\PP$. Let $(\CC^*,\preccurlyeq')$ be a poset of partitions of $\PP$ such that $(\CC^*,\preccurlyeq') \cong (I,\preccurlyeq)$.  A group $G\leqslant \Sym(\PP)$ is said to be \emph{$\I$-imprimitive} on $\PP$ if $G$ is transitive on $\PP$ and $G$ leaves invariant every partition in $\CC^*$.
\end{definition}

\noindent
 Since the relations $\preccurlyeq$ and $\preccurlyeq'$ are often very closely linked, we usually use the same symbol $\preccurlyeq$ for both. If $|I|=1$,  then the poset $(\CC^*,\preccurlyeq)$ consists of a single trivial partition with all parts of size $1$, and if this partition, and the partition with a single part, are the only $G$-invariant partitions, then $G$ is primitive on $\PP$. For this reason we are interested in posets $(I,\preccurlyeq)$ with $|I|\geq 2$. In our analysis we work with a partition poset $(\CC,\preccurlyeq)$ which may contain $(\CC^*,\preccurlyeq)$ as a proper subset. We use the notation $\CC^*$ in Definition~\ref{d:Iimp} to be consistent with later usage, see Subsection~\ref{s:posetbkstr} for more details.

A \emph{block design} is an incidence structure $\D = (\PP,\B)$ consisting of a finite set $\PP$ of \emph{points} and a collection of point-subsets called \emph{blocks}, such that, for some constants $k,r > 0$, every block has size $k$ and every point appears in exactly $r$ blocks. If the point set $\PP$ has size $v$, and if, in addition, for some constants $t,\lambda > 0$ any $t$-subset of points appears in $\lambda$ blocks, then the design is a $t$-$(v,k,\lambda)$ design. An \emph{automorphism} of $\D$ is a permutation of the set $\PP$ that preserves the block set $\B$, and the design $\D$ is \emph{point-transitive} (respectively, \emph{block-transitive}) if it admits a subgroup of automorphisms that acts transitively on $\PP$ (respectively, $\B$). It was shown in 1967 by Block in \cite{Block67} that, for $t \geq 2$, block-transitivity implies point-transitivity.
It was proved by Delandtsheer and Doyen \cite{DD} that nearly all block-transitive $2$-designs are point-primitive, and in \cite{CP93} general constructions of families of point-imprimitive $2$-designs were described by Cameron and the third author. The designs constructed involved one or two nontrivial invariant partitions.

Our goal in this paper is a vast generalisation of the approach of \cite{CP93}, developing a general theory of block-transitive $\I$-imprimitive $2$-designs for arbitrary  finite posets $\I=(I,\preccurlyeq)$ with $|I|\geq 2$. We also give explicit constructions of such $2$-designs for small posets. The designs that we study admit a \emph{generalised wreath product} as a group of automorphisms. The generalised wreath products considered are those introduced by Bailey, Rowley, Speed and the third author in \cite{BPRS} as permutation groups, based on a partially ordered set $(I,\preccurlyeq)$, see Subsection~\ref{s:posets}. They were constructed independently by Holland~\cite{Ho} and Silcock~\cite{Sil} in a more restricted setting. These group actions arise as a specialisation of a construction which Wells \cite[\S 7]{W} described in 1976 for actions of semigroups, and which he called the `wreath product of an ordered set of actions'. It was shown in \cite[Theorem A]{BPRS} that Wells' wreath product action is a group action if the input actions are all group actions and the poset $(I,\preccurlyeq)$ satisfies the maximal condition.  Moreover, for a certain important sub-family of posets $(I,\preccurlyeq)$, the generalised wreath products we consider are the automorphism groups of structures described by J. A. Nelder~\cite{N}, and frequently used to analyse designed statistical experiments.
The general question we address is the following.

\begin{question}\label{qu}
    Given a partially ordered set $\I$, does there exist a $2$-design admitting a block-transitive group of automorphisms that is $\I$-imprimitive on points? If so, give an explicit construction. 
\end{question}

Partial answers are available in the literature when the poset is a chain or anti-chain. We explain these concepts and give a brief overview of these results in Subsection~\ref{sec:chains}. We also mention in Subsection~\ref{sec:chains} some examples of  $\I$-imprimitive block-transitive $2$-designs which have `accidentally' occurred in various classification results in the literature.

 In this paper we use results from \cite{BPRS} about $\I$-imprimitive groups such that $\I=(I\preccurlyeq)$ satisfies a mild finiteness condition called  the \emph{maximal condition}, that is to say,  every nonempty subset $J\subseteq I$ contains a \emph{maximal element} (an element $j\in J$  such that there is no element $i\in J\setminus\{j\}$ with $j\preccurlyeq i$). While every finite poset $\I$ satisfies  this condition, some of our results hold for all (not necessarily finite) $\I$ satisfying the maximal condition, namely those in Sections~\ref{s:dirpdt} and~\ref{s:kron}, so in Section~\ref{s:prelims} we refrain from assuming finiteness until it is needed.

In analogy with the construction in \cite{CP93}, for a poset $\I=(I\preccurlyeq)$ satisfying the maximal condition, we define the point set as a Cartesian product $\PP = \prod_{i \in I} \Delta_i$ of sets $\Delta_i$, with $|\Delta_i|=e_i \geq 2$ for each $i\in I$. For any subset $J \subseteq I$ we define $\Delta_J := \prod_{i \in J} \Delta_i$, with the convention that $\Delta_\varnothing$ is a singleton set, and we let $\pi_J$ denote the natural projection $\PP \rightarrow \Delta_J$. A subset $J\subseteq I$ is called \emph{ancestral} if $j\in J$ and $j\preccurlyeq i$ implies that $i\in J$. In particular, $A[i] := \{ j \in I \ | \ i\preccurlyeq j\}$ is ancestral for each $i\in I$.
We denote by $\A(\I)$  the family of ancestral subsets of $I$. For each $J \in \A(\I)$ the sets $\{ \bm{\varepsilon} \in \PP \ | \ \bm{\varepsilon} \pi_J = \bm{\nu} \}$, as $\bm{\nu}$ ranges over $\Delta_J$, form a partition $\C_J$ of $\PP$, and these partitions $\C_J$ form a poset $(\CC,\preccurlyeq)$ under the operation of refinement. In particular the  subset $\CC^* = \{ \C_{A[i]} \ | \ i \in I \}$ forms a sub-poset $(\CC^*,\preccurlyeq)$ of $(\CC,\preccurlyeq)$ isomorphic to $\I=(I,\preccurlyeq)$.

For a subset $B \subseteq \PP$, the \emph{array function} $\chi_B$ of $B$  is the function
    \begin{equation} \label{arrayfn}
    \chi_B \ : \ \bigcup_{J \in \A(\I)} \Delta_J \rightarrow \mathbb{N}_{\geq 0}, \ \text{where $\forall\,J \in \A(\I)$ and $\forall\,\bm{\nu} \in \Delta_J$,} \ (\bm{\nu})\chi_B := |\{ \bm{\varepsilon} \in B \ | \ \bm{\varepsilon} \pi_J = \bm{\nu} \}|
    \end{equation}
and the \emph{array} of $B$ is the multiset of all the image sets $(\bm{\nu})\chi_B$. As above we consider the $1$-design $\D = (\PP,\B)$ with $\B=B^F$ the set of all images of the `base block' $B\subseteq \PP$ under elements of a suitable $\I$-imprimitive subgroup $F\leq \Sym(\PP)$, namely $F$ is a generalised wreath product $\prod_{(I,\preccurlyeq)}(G_i,\Delta_i)$ with each $G_i\leq \Sym(\Delta_i)$ acting $2$-transitively on $\Delta_i$. We describe such groups in detail in Section~\ref{ss:genwr}. Our first major result gives necessary and sufficient arithmetic conditions, in terms of the array function $\chi_B$, for $\D = (\PP,\B)$ to be a $2$-design.

\begin{theorem} \label{t:gwp-design}
Let $\I=(I,\preccurlyeq)$ be a finite partially ordered set  with $|I|\geq 2$, and  for each $i \in I$ let $\Delta_i$ be a finite set of size $e_i \geq 2$, and set $\PP = \prod_{i \in I} \Delta_i$, of size $v = \prod_{i \in I} e_i$. For each $i\in I$ let $G_i \leq \Sym(\Delta_i)$ acting $2$-transitively on $\Delta_i$, and let $F = \prod_{(I,\preccurlyeq)} (G_i,\Delta_i)$, as in Section~\ref{ss:genwr}. Let $B\subseteq \PP$ of size $k$, and set $\B := B^F$. Then $\D = (\PP,\B)$ is a $2$-design if and only if, for each proper nonempty ancestral subset $J\subset I$, 
    \begin{equation} \label{2des}
       \sum_{S \subseteq \Bdy{J}}(-1)^{|S|} \left(\sum_{\bm{\nu} \in \Delta_{J \cup S}}  \big((\bm{\nu})\chi_B\big)^2\right) 
    = \frac{k(k-1)}{v-1} \left( \prod_{i \in \Bdy{J}} (e_i - 1) \right) \left( \prod_{j \in (J \cup \Bdy{J})^\c} e_j \right)
    \end{equation}
where $J^\c$ denotes the complement of $J$ in $I$, and $\Bdy{J}$ is the set of all maximal elements in $J^\c$.
\end{theorem}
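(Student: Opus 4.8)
The plan is to convert the $2$-design condition into a family of counting identities indexed by the $F$-orbits on ordered pairs of distinct points, and then to show that these orbits are parametrised by the proper ancestral subsets of $I$. Since $F$ is $\I$-imprimitive it is transitive on $\PP$, and it is transitive on $\B=B^F$ by construction; hence $\D$ is a point- and block-transitive $1$-design with blocks of size $k$. For an $F$-orbit $O$ on ordered pairs of distinct points put $n_O:=|O\cap(B\times B)|$, and let $\lambda_O$ be the number of blocks containing a given pair in $O$, which is well defined since $F\leqslant\Aut(\D)$ is transitive on $O$. Counting in two ways the triples consisting of a pair $(\bm{\alpha},\bm{\beta})\in O$ and a block $C\in\B$ with $\bm{\alpha},\bm{\beta}\in C$ gives
\[
|O|\,\lambda_O \;=\; \sum_{C\in\B}|O\cap(C\times C)| \;=\; |\B|\,n_O ,
\]
the last equality because every $C\in\B$ equals $B^g$ for some $g\in F$ and $|O\cap(B^g\times B^g)|=|O\cap(B\times B)|=n_O$, as $O$ is $F$-invariant. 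Summing over all orbits and using $\sum_O n_O=k(k-1)$ and $\sum_O|O|=v(v-1)$, it follows that $\D$ is a $2$-design if and only if $\lambda_O$ is independent of $O$, equivalently
\begin{equation}\label{e:orbcond}
n_O=\frac{k(k-1)}{v(v-1)}\,|O|\qquad\text{for every $F$-orbit $O$ on ordered pairs of distinct points.}
\end{equation}

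The key step is the classification of these orbits. For distinct $\bm{\alpha},\bm{\beta}\in\PP$ let $D=D(\bm{\alpha},\bm{\beta})$ be the (nonempty) set of coordinates in which they differ, i.e.\ $D=\{i\in I\mid\bm{\alpha}\pi_{\{i\}}\ne\bm{\beta}\pi_{\{i\}}\}$. Using the $2$-transitivity of each $G_i$ together with the description of $F=\prod_{(I,\preccurlyeq)}(G_i,\Delta_i)$ from Section~\ref{ss:genwr} --- in which the action of an element of $F$ on the $i$-th coordinate of a point is carried out by an element of $G_i$ depending only on the coordinates indexed by $A[i]\setminus\{i\}$ --- I will show that two ordered pairs of distinct points lie in the same $F$-orbit if and only if their difference sets have the same down-closure in $\I$; that as $(\bm{\alpha},\bm{\beta})$ varies the associated ancestral set $J:=\{j\in I\mid A[j]\cap D=\varnothing\}$ runs over exactly the proper ancestral subsets of $I$; and that the orbit corresponding to a proper ancestral subset $J\subset I$ is
\[
O_J=\bigl\{(\bm{\alpha},\bm{\beta}) \ \bigm| \ \bm{\alpha}\pi_J=\bm{\beta}\pi_J \ \text{ and }\ \bm{\alpha}\pi_{\{i\}}\ne\bm{\beta}\pi_{\{i\}}\ \text{ for every } i\in\Bdy{J}\bigr\},
\]
with no constraint on the coordinates indexed by $(J\cup\Bdy{J})^\c$. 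That $O_J$ is a single orbit is verified by processing the coordinates from the top of $\I$ downwards, using the maximal condition: when $i\in J\cup\Bdy{J}$, every element of $A[i]\setminus\{i\}$ lies in $J$ (since $J$ is ancestral and $\Bdy{J}$ consists of maximal elements of $J^\c$), so $\bm{\alpha}$ and $\bm{\beta}$ agree there, the same element of $G_i$ acts on both $i$-th coordinates, and it can be chosen by transitivity of $G_i$ if $i\in J$ and by $2$-transitivity if $i\in\Bdy{J}$; when $i\in(J\cup\Bdy{J})^\c$, there is some $m\in\Bdy{J}$ with $i\prec m$, so $\bm{\alpha}$ and $\bm{\beta}$ already differ somewhere in $A[i]\setminus\{i\}$ and the two elements of $G_i$ acting on their $i$-th coordinates may be prescribed independently. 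A direct count then gives $|O_J|=v\,\prod_{i\in\Bdy{J}}(e_i-1)\,\prod_{j\in(J\cup\Bdy{J})^\c}e_j$, so the right-hand side of \eqref{2des} equals $\frac{k(k-1)}{v(v-1)}|O_J|$.

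It remains to identify $n_{O_J}$ with the left-hand side of \eqref{2des}. For any ancestral subset $T$ of $I$, the number of ordered pairs $(\bm{\alpha},\bm{\beta})\in B\times B$ with $\bm{\alpha}\pi_T=\bm{\beta}\pi_T$ equals $\sum_{\bm{\nu}\in\Delta_T}\bigl((\bm{\nu})\chi_B\bigr)^2$, directly from the definition \eqref{arrayfn}. Since $J\cup S$ is ancestral for every $S\subseteq\Bdy{J}$ (a maximal element of $J^\c$ has no strict ancestor in $J^\c$), inclusion--exclusion over the events ``$\bm{\alpha}$ and $\bm{\beta}$ agree in coordinate $i$'', for $i\in\Bdy{J}$, among the pairs of $B$ agreeing on $J$, gives
\[
n_{O_J}=\sum_{S\subseteq\Bdy{J}}(-1)^{|S|}\sum_{\bm{\nu}\in\Delta_{J\cup S}}\bigl((\bm{\nu})\chi_B\bigr)^2 ,
\]
which is the left-hand side of \eqref{2des}. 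Therefore condition \eqref{e:orbcond} for the orbit $O_J$ is exactly equation \eqref{2des} for $J$, and $\D$ is a $2$-design if and only if \eqref{2des} holds for all proper ancestral $J\subset I$. Finally, the instance $J=\varnothing$ is redundant: summing \eqref{2des} over the proper nonempty ancestral $J$ and subtracting from the identities $\sum_J n_{O_J}=k(k-1)$ and $\sum_J|O_J|=v(v-1)$ forces the $J=\varnothing$ instance as well. This gives the criterion in the statement.

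The only substantial ingredient is the orbit classification of the second paragraph; the rest is double counting, inclusion--exclusion, and elementary arithmetic in the $e_i$. I expect the delicate point to be establishing $F$-transitivity on $O_J$ along the coordinates in $(J\cup\Bdy{J})^\c$, where the design imposes no condition but the generalised wreath product structure, together with the maximal condition on $\I$, still has to be used to make the downward induction on the poset go through.
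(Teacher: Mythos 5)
Your proof is correct and follows essentially the same route as the paper: it likewise reduces the $2$-design condition to $|(B\times B)\cap O_J| = \tfrac{k(k-1)}{v(v-1)}|O_J|$ for each proper ancestral $J$ (the paper cites \cite[Proposition 1.3]{CP93} for this), takes the orbital decomposition into the sets $O_J$ (the paper cites \cite[Theorem C]{BPRS} rather than re-proving it), evaluates $|(B\times B)\cap O_J|$ by the same inclusion--exclusion over $S\subseteq\Bdy{J}$, and removes the $J=\varnothing$ condition by the same summation argument. The only difference is that you sketch self-contained proofs of the two cited ingredients (the orbit-counting criterion and the $F$-transitivity on each $O_J$), and those sketches are sound.
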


To demonstrate how Theorem~\ref{t:gwp-design} may be used to construct new $\I$-imprimitive block-transitive $2$-designs, we construct infinite families of such designs for each poset $(I,\preccurlyeq)$ with $|I|=3$ that is not a chain or an antichain (see Section \ref{sec:chains} for these posets), and also for the famous `N'-poset  in Figure~\ref{fig:posets-4} which was originally described by Throckmorton (see \cite[Diagram(7), Figure 3, p. 26]{Thr}, or  \cite[Example 1, Figure 3]{BPRS}). As a result of our new constructions in Section~\ref{sec:ex} (and previous work on chains and antichains which we describe in Section~\ref{sec:chains}), we have the following theorem. 
\begin{theorem}
For each partially ordered set $\I = (I,\preccurlyeq)$ with $2\leq |I| \leq 3$, and for $\I$ the `N'-poset in Figure~\ref{fig:posets-4},  there exists infinitely many $2$-designs $\D = (\PP,\B)$ such that $\D$ admits a block-transitive automorphism group $G$ which is $\I$-imprimitive on $\PP$.
\end{theorem}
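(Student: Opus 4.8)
The plan is to derive this theorem directly from Theorem~\ref{t:gwp-design}. Given a poset $\I = (I,\preccurlyeq)$ from the list, choose for each $i \in I$ any $2$-transitive group $G_i \leq \Sym(\Delta_i)$ (for instance $\Sym(\Delta_i)$), form $F = \prod_{(I,\preccurlyeq)}(G_i,\Delta_i)$ as in Section~\ref{ss:genwr}, and take $\B = B^F$ for a base block $B \subseteq \PP$. By construction $F$ is transitive on $\PP$ and leaves invariant every partition $\C_J$ with $J$ ancestral, in particular every partition in $\CC^*$, so $F$ is $\I$-imprimitive on $\PP$; and $\B$ is a single $F$-orbit, so $\D = (\PP,\B)$ is automatically $F$-block-transitive. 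By Theorem~\ref{t:gwp-design}, $\D$ is a $2$-design exactly when the array function $\chi_B$ satisfies the system~\eqref{2des}, one equation for each proper nonempty ancestral $J \subset I$; and whether~\eqref{2des} holds depends only on the integers $e_i$, on $k = |B|$, and on $\chi_B$, not on the choice of the $G_i$. So the whole statement reduces to exhibiting, for each $\I$ on the list and for infinitely many values of a size parameter, integers $e_i \geq 2$ and a subset $B$ (with $2 \le |B| < v$) whose array function solves~\eqref{2des}.

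For $|I| = 2$ the only posets are the $2$-chain and the $2$-antichain, and suitable block-transitive $\I$-imprimitive $2$-designs are already known; see the discussion in Section~\ref{sec:chains} (and \cite{CP93}). It remains to handle the three posets on three points that are neither chains nor antichains — the disjoint union of a $2$-chain and an isolated point, the ``V'' poset with a unique minimum below two incomparable maxima, and its dual ``$\Lambda$'' poset — together with the N-poset of Figure~\ref{fig:posets-4}. For each, the family $\A(\I)$ of ancestral subsets is small, so there are only a few proper nonempty ones (three for ``V'' and ``$\Lambda$'', four for the disjoint union, six for the N-poset), and one writes out the corresponding instances of~\eqref{2des}: each left-hand side is an inclusion--exclusion combination, over $S \subseteq \Bdy{J}$, of the ``sum of squares of fibre sizes'' $\sum_{\bm{\nu} \in \Delta_{J \cup S}} ((\bm{\nu})\chi_B)^2$, and each right-hand side is a fixed rational number in $k$, $v$ and the $e_i$.

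The constructions themselves are carried out in Section~\ref{sec:ex}. The idea is to take the $\Delta_i$ to be structured sets (affine lines over a common finite field, or small index sets) and to build $B$ in a very regular manner — as a union of sub-grids $\prod_{i \in I} C_i^{(s)}$, or as the solution set of a short system of linear equations — so arranged that each fibre count $(\bm{\nu})\chi_B$ takes only one of two nearby values with prescribed multiplicities. Under such an ansatz each sum of squares in~\eqref{2des} becomes an explicit polynomial in the parameters, the whole system collapses to a handful of Diophantine conditions, and letting a free parameter (the field size, or the number of sub-grids) grow without bound produces designs on $v = \prod_i e_i \to \infty$ points, hence infinitely many pairwise non-isomorphic ones.

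I expect the N-poset to be the hardest case. It forces six instances of~\eqref{2des} to hold simultaneously, and its boundary sets $\Bdy{J}$ overlap, coupling the different ``levels'' of the array function, so no one-parameter family is likely to work off the shelf: the construction must be engineered with several interacting parameters and an array that is piecewise constant on large pieces, and one must then verify that the resulting system of Diophantine conditions has infinitely many solutions. By contrast each of the three three-point posets reduces to at most four identities with ample freedom left in the choice of $B$, and should go through routinely once the two-valued-array device is in place.
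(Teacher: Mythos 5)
Your reduction is sound and matches the paper's strategy: the choice of $2$-transitive groups $G_i$ is irrelevant, $F$-block-transitivity and $\I$-imprimitivity are automatic, the $|I|=2$ cases (and the $3$-chain and $3$-antichain) are covered by \cite{chainspaper} and \cite{multigrids} as recalled in Section~\ref{sec:chains}, and everything reduces to exhibiting, for each of the three remaining $3$-element posets and the N-poset, infinitely many parameter sets $(e_i)$ and base blocks $B$ whose array satisfies \eqref{2des}. But that last step \emph{is} the theorem: it is an existence statement whose entire content is the construction, and your proposal stops exactly where the work begins. Saying that one should take $B$ to be a union of sub-grids or a solution set of linear equations so that ``the whole system collapses to a handful of Diophantine conditions'' with infinitely many solutions is a research plan, not a proof; you yourself concede for the N-poset that ``no one-parameter family is likely to work off the shelf'' and that one ``must then verify'' solvability. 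Nothing in the proposal verifies any instance of \eqref{2des} for any poset.

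The gap is not merely one of omitted routine detail, because the specific ansatz you propose would not deliver the N-poset case. The paper's constructions (Examples~\ref{ex:ch-grid}, \ref{ex:V}, \ref{ex:V-inv}, \ref{ex:N}) hinge on a particular normalisation you do not mention: taking $e_1=p^2+p+1$, $e_2=p^2-p+1$, $e_3=p^4-p^2+1$ (and $e_4=p^8-p^4+1$), so that $v=q^2+q+1$ and $k=q+1$ with $q=p^4$ or $p^8$, forcing $k(k-1)/(v-1)=1$ and making the right-hand sides of \eqref{2des} the integers $|O_J|/v$. The base blocks are then hand-crafted unions $B_1\cup B_2\cup B_3(\cup B_4)$ whose fibre counts are \emph{not} two-valued: for the N-poset the array on $\Delta_{\{1,3\}}$ takes five distinct values ($p$, $1$, $p^2+1$, $p^2$, $p^2-p$) with carefully arranged multiplicities, precisely because the overlapping borders couple six simultaneous conditions. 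A two-valued array cannot satisfy all of them at once, so your ``two-valued-array device'' would have to be abandoned and replaced by something like the paper's multi-piece construction before the argument could be completed. As it stands, the proposal establishes the (correct and useful) reduction to \eqref{2des} but does not prove the theorem.
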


\subsection{Previous results on chains and antichains}\label{sec:chains}

If $\I=(I,\preccurlyeq)$ is an \emph{$s$-chain} with  $s \geq 3$, that is to say, $|I|=s$ and $\preccurlyeq$ is a total order, then the design is said to be \emph{$s$-chain-imprimitive}. For a given point set $\PP$ admitting an $s$-chain-imprimitive group $G$ of automorphisms, and block set $\B$ that forms a single orbit under $G$, we provided in  \cite{chainspaper} necessary and sufficient conditions, in terms of the array, in order for the block-transitive, $s$-chain-imprimitive incidence structure $(\PP,\B)$ to be a $2$-design. 
We note that  \cite[Theorem 1.3]{chainspaper} is a special case of Theorem \ref{t:gwp-design} for $(I,\preccurlyeq)$ being an $s$-chain (we explain this in more detail after the proof of Theorem \ref{t:gwp-design} in Section \ref{s:proof}).
Furthermore, for any $s\geq 3$, we gave an explicit construction for an infinite family of block-transitive, $s$-chain-imprimitive $2$-designs in \cite{chainspaper}, and an infinite family of flag-transitive, $s$-chain-imprimitive $2$-designs in \cite{ftchainspaper}.

\medskip 

The partially ordered set $\I = (I,\preccurlyeq)$ is an \emph{$s$-antichain}, for arbitrary $s \geq 2$, if $|I| = s$ and $i \preccurlyeq j$ if and only if $i = j$. We call such designs \emph{$s$-grid-imprimitive}, since the point set can be identified with a Cartesian product $\Delta_1 \times \ldots \times \Delta_s$, which can be visualised as an $s$-dimensional grid. The case where $\I$ is a $2$-antichain  corresponds to two point-partitions with the property that any two sets that belong in different partitions intersect in exactly one point, and the point set can be arranged in a rectangular grid such that the two nontrivial partitions are the set of rows and the set of columns. For brevity we refer to $2$-grid-imprimitive designs as grid-imprimitive. A systematic approach to constructing $G$-block-transitive, $G$-grid-imprimitive $2$-designs began as part of the study of block-transitive point-imprimitive designs \cite[Section3]{CP93}, and was developed further by Alavi, Daneshkhah, and the second and third authors in \cite{grids22}, and general constructions and explicit examples of block-transitive, grid-imprimitive 2-designs were described in \cite{grids22}. The approach in \cite{CP93} was developed combinatorially and computationally by Brasi\v{c} et. al. \cite{BMV2018, BMSVV} to classify flag-transitive grid-imprimitive designs with bounded parameters. A number of examples of grid-imprimitive, block-transitive $2$-designs arise from examples where the group $\widehat{G} = S_n \wr S_2$ acts point-primitively and its elements preserve or transpose a square $n \times n$ grid, as described in \cite{grids22} (see the design $\widehat{\mathcal{D}}(\Delta)$ in \cite[(1.3)]{grids22}) and \cite{BMV2018}. The index $2$ base group $S_n^2$ is grid-imprimitive; it acts block-transitively if and only if a block-stabiliser $\widehat{G}_B$ in $\widehat{G}$ also interchanges the rows and columns of the grid. Through this observation some $G$-block-transitive designs with $G$ grid-imprimitive have been found either by computational enumeration (for example, \cite[Examples 8.1 and 8.2]{BMV2018}), or as part of theoretical classifications, see examples below. 

In further work with Alavi and Daneshkhah \cite{multigrids} we generalised the analysis of \cite{grids22} to an $s$-antichain $\I=(I\preccurlyeq)$ for arbitrary $s \geq 2$. For a given point set $\PP$ admitting an $s$-grid-imprimitive group $G$ of automorphisms, and block set $\B$ that forms a single orbit under $G$, we were able to find necessary and sufficient conditions, in terms of a set of parameters called the \emph{array} (defined in \eqref{arrayfn}), in order for the block-transitive, $s$-grid-imprimitive incidence structure $(\PP,\B)$ to be a $2$-design. Note that each of the results \cite[Theorem 1.3]{multigrids} is a special case of Theorem \ref{t:gwp-design} for $(I,\preccurlyeq)$ an $s$-antichain (we explain this in more detail after the proof of Theorem \ref{t:gwp-design} in Section \ref{s:proof}). Furthermore we were able to construct an infinite family of block-transitive, $s$-grid-imprimitive $2$-designs with $s=3$, and one example with $s=4$. Additional examples with smaller block sizes can be found in other work:
	\begin{itemize}
    \item For $k = 4$, the classification by Zhan et. al. \cite{ZZC} of block-transitive $2$-$(v,4,\lambda)$ designs has produced two $G$-block-transitive, $G$-grid-imprimitive $2$-designs with $G = S_5 \times S_5$, $v = 25$, and $\lambda = 18$ or $72$ \cite[Lemmas 4.3 and 4.4]{ZZC}.
    \item  For $k = 5$, Zhao and Zhou~\cite{ZZ} found two 2-$(81,5,\lambda)$ designs  (with $\lambda=14112, 7056$ respectively) which  admit a block-transitive action of $S_9\times S_9$,  namely the designs with base block shown in \cite[Fig. 2, Fig. 3]{ZZ}, since we see from these figures that a block-stabiliser in $S_9\wr S_2$ interchanges the rows and columns of the grid.
	\item For $k = 6$, there is a flag-transitive biplane (that is, a symmetric $2$-$(v,k,2)$ design) that admits a block-transitive automorphism group that preserves a $3$-dimensional grid structure on points. Its parameters are $(v,k,\lambda) = (16,6,2)$ with automorphism group $2^4.S_6$, and there is a grid-imprimitive automorphism subgroup $2^4.S_3$ that preserves three independent point-partitions that form a $2 \times 2 \times 4$ grid (see \cite[Table 2, line 1]{multigrids}).
	\item For $k = 8$, we again refer to \cite[Table 3]{DP21}: There is a $2$-$(36,8,4)$ design with automorphism group $S_6$ which is flag-regular and preserves a $6 \times 6$ grid structure, and a $2$-$(15,8,4)$ design that has a block-transitive (but not flag-transitive) automorphism subgroup that preserves a $3 \times 5$ structure.
	\item For $k = 20$, there are two $2$-$(96,20,4)$ designs that admit a flag-transitive, grid-imprimitive automorphism group (see \cite{LPR} and \cite[Table 3]{DP21}). One of these designs has automorphism group $2^8.S_6$ with a subgroup $2^4.S_5$ that is also flag-transitive and preserves a $6 \times 16$ grid. The other design has automorphism group $2^6.S_5$ with a subgroup $2^4.S_5$ that is flag-transitive and preserves a $6 \times 16$ grid (in fact there are one invariant partition with parts of size $16$ and two independent partitions with parts of size $6$.)
	\end{itemize}

\section{Poset block structures and generalised wreath products of groups}\label{s:prelims}

We present here some background required to prove Theorem~\ref{t:gwp-design}, including important concepts about posets, and their associated block structures and  symmetry groups. Most of this information is taken from \cite{BPRS}.

\subsection{Posets and their ancestral subsets}\label{s:posets}

Let $\I=(I,\preccurlyeq)$ be a partially ordered set, that is, a set $I$ with a reflexive and transitive relation $\preccurlyeq$ , where we write $i\preccurlyeq j$ if $(i,j)$ lies in this relation, and we write $i\prec j$ if in addition $i\ne j$. 
As usual, we visualise a poset $\I=(I,\preccurlyeq)$ with nodes representing elements of $I$ and distinct nodes $i$ and $j$ joined by an edge with $i$ below $j$ if $i\prec j$.
A subset $J\subseteq I$ is called an \emph{ancestral} subset if, for each $j\in J$, the relation $j\prec i$ implies that $i\in J$. For example,  the set $I$ is clearly ancestral, and the empty set $\varnothing$ is ancestral since it vacuously satisfies this condition. Also, for each $i\in I$, setting 
\[
A(i) := \{ j \in I \ | \ i\prec j \}\quad \mbox{and}\quad A[i] := A(i) \cup \{i\} = \{ j \in I \ | \ i\preccurlyeq j \},
\]
it follows from the definitions that both $A(i)$ and $A[i]$ are ancestral. Observe that for any $i, j \in I$, we have $A[i] \supsetneq A[j]$ if and only if $i\prec j$.

For a subset $J\subseteq I$, an element $j\in J$ is said to be a \emph{maximal element} of $J$ if no element $i\in J$ is such that $j\prec i$, in other words  $A(j)\cap J=\varnothing$, and the poset $\I$ is said to \emph{satisfy the maximal condition} if every subset of $I$ contains at least one maximal element.

We denote by $\A(\I)$ the set of all ancestral subsets of $I$. It is straightforward to see that $\A(\I)$ is closed under taking set-theoretic union. 
 For any ancestral subset $J \in \A(\I)$, the \emph{border} $\Bdy{J}$ of $J$ is the set of all maximal elements in the complement $J^\c = I\setminus J$. Note that, for any $S \subseteq \Bdy{J}$ the union $J \cup S \in \A(\I)$. In particular, for any $i \in I$ we have $i \in \Bdy{A(i)}$. 
 Moreover, each $J \in \A(\I)$ can be written as $J = \bigcup_{j \in J} A[j]$.

\subsection{Poset block structures and partitions}\label{s:posetbkstr}

Let $\I=(I,\preccurlyeq)$ be a partially ordered set. For each $i \in I$ let $\Delta_i$ be a set of size $e_i \geq 2$, and for each subset $J \subseteq I$, write  $\Delta_J := \prod_{j \in J} \Delta_j$ (the Cartesian product). In particular we set $\PP:= \Delta_I= \prod_{i \in I} \Delta_i$, and we use the convention that $\Delta_{\varnothing}$ is a singleton. For our main result Theorem~\ref{t:gwp-design} we will assume that both $\I$ and the $e_i$ are finite. 

Now choose $J \subseteq I$, and recall the projection $\pi_J:\PP \to \Delta_J$ in \eqref{piJ}.  Let $\sim_J$ be the equivalence relation on $\PP$ such that
    \[ 
\mbox{for all $\bm{\delta}, \bm{\varepsilon} \in \PP$,}\quad    \bm{\delta} \sim_J \bm{\varepsilon} \quad \text{if and only if} \quad \bm{\delta} \pi_J = \bm{\varepsilon} \pi_J. 
    \]
Also let $\C_J$ denote the set of equivalence classes of $\sim_J$, so that
    \begin{equation} \label{partition} 
    \mbox{setting}\quad C_{\bm{\nu}} = \left\{ \bm{\delta} \in \PP \ \vline \ \bm{\delta} \pi_J = \bm{\nu} \right\} \quad 
     \mbox{for each $\bm{\nu} \in \Delta_J$, we have} \quad   \C_J = \left\{ C_{\bm{\nu}} \ \vline \ \bm{\nu} \in \Delta_J \right\}.
    \end{equation}
In particular, for $J \in \{\varnothing, I\}$ the associated partitions are $\C_\varnothing = \{\PP\}$ and the set $\C_I = \binom{\PP}{1}$ of singleton subsets of $\PP$. In the case where $\I$ and all the $e_i=|\Delta_i|$ are finite, the partition $\C_J$ consists of $d_J$ classes of size $c_J$, where
    \begin{equation} \label{cd}
    c_J = |\Delta_{J^\c}| = \prod_{j \in J^\c} e_j \quad \text{and} \quad d_J = |\Delta_J| = \prod_{j \in J} e_j.
    \end{equation}

For $J\subseteq J'\subseteq I$, the projection $\pi_J$ induces a natural map $\Delta_{J'}\to \Delta_J$, namely
\begin{equation}\label{piJ}
\mbox{for $\bm{\nu}'=(\nu_i)_{i \in J'} \in \Delta_{J'}$, define}\quad  \bm{\nu}'\pi_{J}= \bm{\nu}  \quad \mbox{where $\bm{\nu} =(\nu_i)_{i \in J} \in \Delta_J$.} 
\end{equation}
Note that, for any $\bm{\delta} \in \PP$ such that $ \bm{\delta}\pi_{J'} = \bm{\nu}'$, we have $ \bm{\delta}\pi_{J'}\pi_J =\bm{\nu}'\pi_J= \bm{\nu}$ and $ \bm{\delta}\pi_J = \bm{\nu}$. In other words, 
\begin{equation}\label{eq:proj}
  \text{whenever }J\subseteq J'\subseteq I, \text{ the composition }\pi_{J'}\circ \pi_J =\pi_J.  
\end{equation}
Then, by \eqref{partition}, for each $ \bm{\nu}' \in \Delta_{J'}$, the class $C_{\bm{\nu}'}$ of $\C_{J'}$ is contained in the class  $C_{\bm{\nu}}$ of $\C_J$, where $\bm{\nu} := \bm{\nu}'\pi_J\in\Delta_J$, and this inclusion is strict if $J \subsetneq J'$ (since all the $e_j \geq 2$). It follows from this discussion that, if $J \subsetneq J'$ then, for $\bm{\nu}' \in \Delta_{J'}$ and $\bm{\nu} \in \Delta_J$, the $\C_{J'}$-class $C_{\bm{\nu}'}$ is contained in the $\C_J$-class $C_{\bm{\nu}}$ if and only if $\bm{\nu}'\pi_J = \bm{\nu}$, and the number of $\C_{J'}$-classes contained in a fixed $\C_J$-class is $|\Delta_{J' \setminus J}| = \prod_{j \in J' \setminus J} e_j$.

\smallskip
In summary, it follows easily from \eqref{partition} that 
\begin{itemize}
    \item $J\subseteq J'$ if and only if $\C_{J'}$ is a refinement of $\C_{J}$; 
    \item $J\subsetneq J'$ if and only if $\C_{J'}$ is a proper refinement of $\C_{J}$;
    \item  and for any $J, J' \subseteq I$, $\C_{J \cup J'} = \{ C \cap C' \ | \ C \in \C_J, \ C' \in \C_{J'} \}$. 
\end{itemize}    
We are particularly interested in the case of ancestral subsets of $I$, and we write, for $J, J' \in \A(\I)$,
    \begin{equation}\label{e:prec}
    \C_{J'} \prec \C_J \ \text{if and only if} \ \text{$\C_{J'}$ is a proper refinement of $\C_J$},   \ \text{or equivalently} \    J \subsetneq J'.
    \end{equation} 
Also we write $\C_{J'} \preccurlyeq \C_J$ if either $\C_{J'} \prec \C_J$ or $\C_{J'} = \C_J$.

Recalling that $A[j]$ is ancestral for each $j\in I$ and that, for each ancestral $J\in\A(\I)$ we have 
    \begin{equation} \label{CJ}
\mbox{$J = \bigcup_{j \in J} A[j]$,\quad  it follows that }\quad     \C_J = \Big\{ \bigcap_{j \in J} C_j \ \Big| \ C_j \in \C_{A[j]} \ \forall\,j \in J \Big\}. 
    \end{equation}
Note that the partition $\C_{A[j]}$ consists of singletons if and only if $j$ is the unique minimal element of $I$, and hence the only ancestral subset $J$ containing such an element $j$ is $J=A[j]=I$. This is the case, for example if $\I$ is a chain as in Figure~\ref{fig:posets-2}(a) or~\ref{fig:posets-3}(a), or the poset in Figure~\ref{fig:posets-3}(d). 

The pair $(\PP,\CC)$, where
    $ \CC := \{ \sim_J \ | \ J \in \A(\I)\}$,
was called a \emph{poset block structure} in \cite[Definition 4]{BPRS}. Alternatively, by identifying each equivalence relation $\sim_J$ with its set $\C_J$ of equivalence classes, the set $\CC$ in the poset block structure can be written as 
    \begin{equation} \label{posblkstr}
    \CC := \{ \C_J \ | \ J \in \A(\I)  \}.
    \end{equation}
The relation $\preccurlyeq$ on the set of point-partitions in \eqref{e:prec} is a partial order on $\CC$. 
In particular, for distinct $i, j \in I$,  the inclusion $A[i] \supsetneq A[j]$ holds if and only if $i \prec j$, and hence $\C_{A[i]} \prec \C_{A[j]}$ if and only if $i \prec j$. It follows that, 
    \begin{equation}\label{C*}
       \mbox{for}\quad  \CC^* := \{ \C_{A[i]} \ | \ i \in I \}, \quad \mbox{the `sub-poset'\quad $(\CC^*,\preccurlyeq)$ \quad is isomorphic to\quad $(I,\preccurlyeq)$}
    \end{equation} 
 under the map $i\to \C_{A[i]}$, for $i \in I$. Note that $\C_\varnothing\in\CC\setminus \CC^*$ for all poset block structures $(\PP, \CC)$. In addition $\CC\setminus \CC^*$ may contain other partitions $\C_J$, where $J$ is a nonempty ancestral subset    not of the form $A[i]$ for any $i\in I$. 
  This is the case, for example, for the posets in Figure~\ref{fig:posets-3}(c) and ~\ref{fig:posets-3}(d) where the subset $J=\{2,3\}$ is ancestral but not of the form  $A[i]$ for any $i\in I$, and so $\C_J\not\in\CC^*$.  In general,  if $\C_J\in\CC\setminus \CC^*$ with $J\neq\varnothing$, then by \eqref{CJ}, the classes of $\C_J$ are simply intersections of classes of some partitions $\C_{A[i]}$ in $\CC^*$, for example, those $A[i]$ such that $i\in J$ since $J=\cup_{i\in J} A[i]$. 

Illustrations for $\CC^*$ are provided in Figures \ref{fig:posets-2}, \ref{fig:posets-3}, and  \ref{fig:posets-4}, for $|I|=2,3$ and one example with $|I|=4$. A more detailed description of the partitions in $\CC$ will be provided in Section \ref{sec:ex} for those examples that are not chains or antichains.   

\begin{figure}[ht]
    
    \begin{subfigure}{\textwidth}
    \centering
    \includegraphics{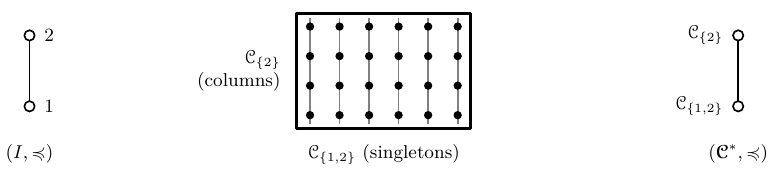}
    \caption{$\prod_{(I,\preccurlyeq)}(G_i,\Delta_i) \cong G_1 \wr G_2$}
    \label{fig:2ch}
    \end{subfigure}

    \vspace{14pt}
    \begin{subfigure}{\textwidth}
    \centering
    \includegraphics{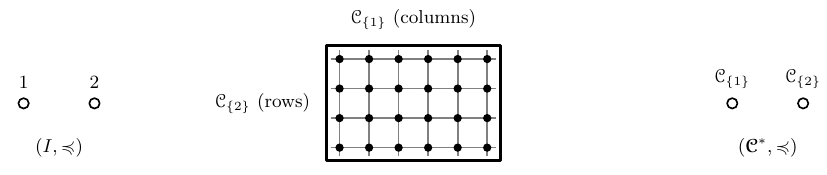}
    \caption{$\prod_{(I,\preccurlyeq)}(G_i,\Delta_i) \cong G_1 \times G_2$}
    \label{fig:2grid}
    \end{subfigure}
    
\caption{Poset block structures for partially ordered sets $(I,\preccurlyeq)$ with $|I| = 2$}
\label{fig:posets-2}
\end{figure}

\begin{figure}[h]

    \begin{subfigure}{\textwidth}
    \centering
    \includegraphics{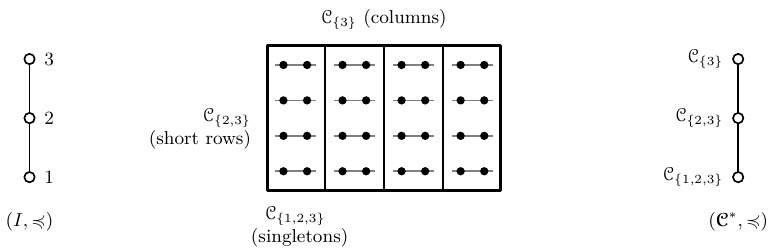}
    \caption{$\prod_{(I,\preccurlyeq)}(G_i,\Delta_i) \cong G_1 \wr G_2 \wr G_3$}
    \label{fig:3ch}
    \end{subfigure}

    \vspace{14pt}
    \begin{subfigure}{\textwidth}
    \centering
    \includegraphics{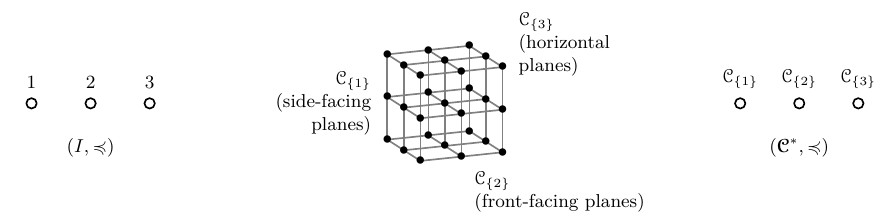}
    \caption{$\prod_{(I,\preccurlyeq)}(G_i,\Delta_i) \cong G_1 \times G_2 \times G_3$}
    \label{fig:3grid}
    \end{subfigure}    
    
\caption{Poset block structures for partially ordered sets $(I,\preccurlyeq)$ with $|I| = 3$; chain and antichain}
\label{fig:posets-3}
\end{figure}

\begin{figure}[ht] \ContinuedFloat
     \begin{subfigure}{\textwidth}
    \centering
    \includegraphics{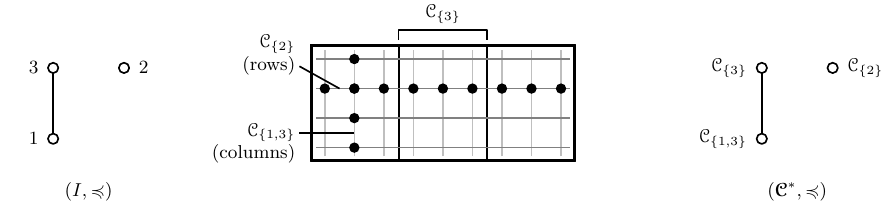}
    \caption{$\prod_{(I,\preccurlyeq)}(G_i,\Delta_i) \cong (G_1 \wr G_3) \times G_2$}
    \label{fig:ch-grid}
    \end{subfigure}
 
 \vspace{14pt}
    \begin{subfigure}{\textwidth}
    \centering
    \includegraphics{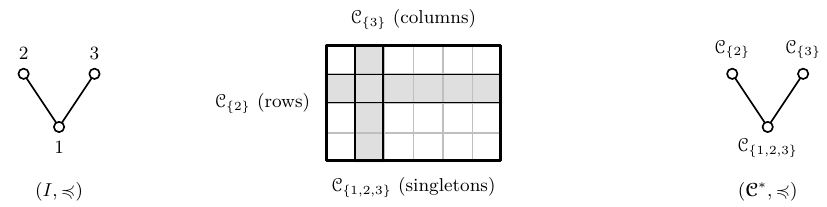}
    \caption{$\prod_{(I,\preccurlyeq)}(G_i,\Delta_i) \cong G_1 \wr (G_2 \times G_3)$}
    \label{fig:V}
    \end{subfigure}

    \vspace{14pt}
    \begin{subfigure}{\textwidth}
    \centering
    \includegraphics{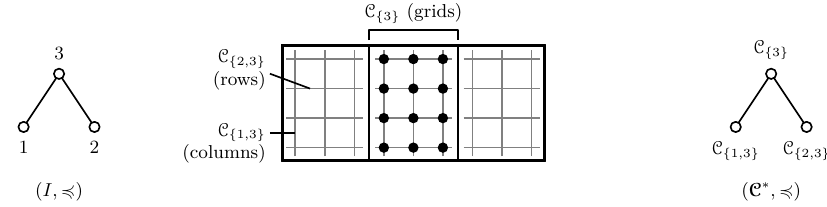}
    \caption{$\prod_{(I,\preccurlyeq)}(G_i,\Delta_i) \cong (G_1 \times G_2) \wr G_3$}
    \label{fig:V-inv}
    \end{subfigure}
    
\caption{Poset block structures for partially ordered sets $(I,\preccurlyeq)$ with $|I| = 3$; not chain or antichain}
\end{figure}

\begin{figure}[ht]
\centering
\includegraphics{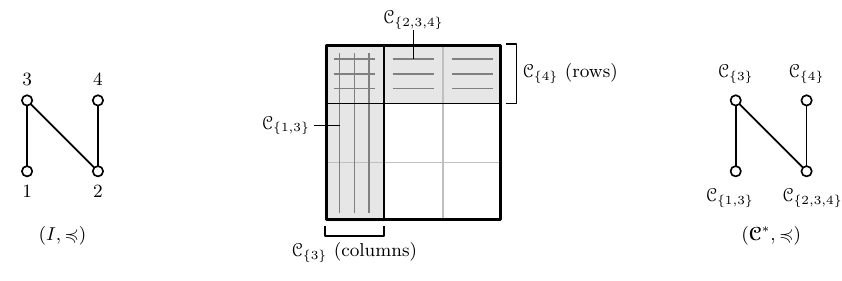}
\caption{Poset block structure for the `N' poset}
\label{fig:posets-4}
\end{figure}

\subsection{Generalised wreath products of groups}\label{ss:genwr}

Let $(\PP,\CC)$ be  a poset block structure relative to the poset $\I=(I,\preccurlyeq)$, with $\CC$ as in \eqref{posblkstr}  and point set $\PP=\Delta_I= \prod_{i \in I} \Delta_i$, and assume that $\I=(I,\preccurlyeq)$ satisfies the maximal condition (see Subsection~\ref{s:posets}). When we prove Theorem~\ref{t:gwp-design} both $\I$ and the $e_i$ will be finite, but for this section we do not assume finiteness.  For each $i \in I$  let $G_i \leq \Sym(\Delta_i) = S_{e_i}$ where $e_i=|\Delta_i|\geq2$, and denote by $F_i$ the set of all functions $f_i : \Delta_{A(i)} \to G_i$. Note that $F_i$ is a group under pointwise multiplication $\ast$ given by
     \begin{equation} \label{ptwise}
   \bm{\nu}(f_i \ast h_i) := \bm{\nu}f_i \cdot \bm{\nu}h_i \ \forall\,\bm{\nu} \in \Delta_{A(i)} \ \text{and} \ \forall\,f_i, h_i \in F_i,
    \end{equation} 
so $F_i$ is isomorphic to the cartesian product $G_i^{|\Delta_{A(i)}|}$. 
Consider 
\begin{equation}\label{F}
F := \prod_{i \in I} F_i = \prod_{i \in I} G_i^{|\Delta_{A(i)}|}\quad \mbox{with elements $f = (f_i)_{i \in I}$}.    
\end{equation}
As in \cite[Section 3]{BPRS} we identify $F$ with a subgroup of $\Sym(\PP)$: for each $f \in F$, define the map $f : \PP \to \PP$ (see  \cite[Section 3, Definition]{BPRS}) as follows:
     \begin{equation} \label{action}
\text{for each} \ \bm{\delta} = (\delta_i)_{i\in I} \in \PP, \quad    \bm{\delta} f := \bm{\varepsilon} = (\varepsilon_i)_{i\in I} \in \PP, \quad \text{where} \quad \varepsilon_i = \delta_i \big((\bm{\delta}\pi_{A(i)} )f_i\big) \ \forall\,i \in I. 
    \end{equation}
By \cite[Lemma 1]{BPRS}, distinct elements of $F$ determine distinct maps $\PP \to \PP$; since $\I$ satisfies the maximal condition it follows from \cite[Lemma 8]{BPRS} that the map $f$ in \eqref{action} is a bijection and hence lies in $\Sym(\PP)$; and by \cite[Theorem A]{BPRS} the subset $F$ of  $\Sym(\PP)$ is a subgroup. We note in passing that the maximality condition on $\I$ is necessary for the maps $f$ to be invertible,  see \cite[Example 2]{BPRS} for an example where  $\I$ contains an infinite ascending chain.

This subgroup is  called the \emph{generalised wreath product} of the permutation groups $G_i$ on $\Delta_i$, and as in \cite{BPRS} we denote it by $F=\prod_{(I,\preccurlyeq)}(G_i,\Delta_i)$ (see the discussion following \cite[Theorem A]{BPRS}).

For each $J\subseteq I$, recall that $\pi_J:\PP\to \Delta_J$ denotes the natural projection map. We also write $F_J=\prod_{j\in J} F_j$ and $\varphi_J:F\to F_J$ for the natural projection $\varphi_J:(f_i)_{i\in I}\to (f_j)_{j \in J}$.  Suppose that $J$ is an ancestral subset of $I$. Then by the discussion after \cite[Lemma 2]{BPRS}, the action \eqref{action} of $F$ on $\PP$ induces an action of $F_J=F\varphi_J$ on $\Delta_J$, namely,  for $f \in F$ and $\bm{\nu}=(\nu_j)_{j \in J} \in \Delta_{J}$,  choose a point $\bm{\delta}\in \PP$ such that $\bm{\delta} \pi_{J} = \bm{\nu}$ and define
    \begin{equation} \label{action-J}
    \bm{\nu} (f \varphi_J) = (\nu_j)_{j \in J} (f \varphi_J) = ({\bm{\delta}} f) \pi_J,
    \end{equation}
    noting that this is independent of the choice of $\bm{\delta}$. By \cite[Theorem A]{BPRS} (and the lemmas leading up to it),  this $F_J$-action is faithful and the induced group on $\Delta_J$ is the generalised wreath product $F_J=\prod_{(J,\preccurlyeq)}(G_j,\Delta_j)$. Further, again by \cite[Lemma 2]{BPRS}, the partition $\CC_J$ is $F$-invariant and by \cite[Theorem A (ii)]{BPRS}, the induced group $F^{\C_J}$ on $\C_J$ is permutationally isomorphic to $F_J$ on $\Delta_J$, and the kernel $F_{(\C_J)}$ of this $F$-action is
        \begin{equation} \label{ker}
    F_{(\C_J)} = \{ f \in F \ | \ f_j = \iota_j\in F_j, \ \forall\,j \in J \},\quad \mbox{where $\iota_j:\bm{\nu}\to 1_{G_j}$ for all }\bm{\nu}\in \Delta_{A(j)}
    \end{equation}
By \eqref{ptwise},  $\iota_j$ is the identity of the group $F_j$. Thus $F_{(\C_J)}$ may be identified with $F_{J^\c} = \prod_{j \in J^\c} F_j$. 

By \cite[Theorem B]{BPRS}, the full automorphism group of the poset block structure $(\PP,\CC)$, with $\CC$ as in \eqref{posblkstr}, is the generalised wreath product $\prod_{(I,\preccurlyeq)} (S_{e_i},\Delta_i)$ (taking the group $G_i=\Sym(\Delta_i)$ for each $i\in I$). As discussed at the end of Section~\ref{s:posetbkstr}, each $\C_J\in\CC\setminus \CC^*$, with  $\CC^*$ as in \eqref{C*}, corresponds to an ancestral set $J=\cup_{j\in J} A[j]$, and if a permutation preserves each of the partitions $\C_{A[j]}$, for $j\in J$, then it also preserves $\C_J$. Therefore a permutation preserving each partition in $\CC^*$ also preserves all the partitions in $\CC$, that is, 
$\prod_{(I,\preccurlyeq)} (S_{e_i},\Delta_i)$ is also the full automorphism group of the substructure `$(\PP,\CC^*)$'.  

We finish this subsection with examples for the two posets $\I=(I,\preccurlyeq)$ with $|I|=2$. More general versions of these examples will be discussed in Subsection~\ref{s:dirpdt}.  

\begin{example} \label{poset:grid}
Let $(I,\preccurlyeq)$ be the antichain $\{1,2\}$ (see Figure~\ref{fig:2grid}), and let $G_i\leq \Sym(\Delta_i)$ for $i=1,2$. Then $A(1) = A(2) = \varnothing$, and thus the Cartesian products $\Delta_{A(1)}$ and $\Delta_{A(2)}$ are singletons. More significantly, there are two proper nonempty ancestral subsets, namely, $A[1] = \{1\}$ and $A[2] = \{2\}$, and it follows from the discussion above that $F_i=G_i$ for each $i$, and the generalised wreath product $F=\prod_{(I,\preccurlyeq)} (G_i,\Delta_i) \cong G_1 \times G_2$.  The associated partitions of $\PP=\Delta_1\times \Delta_2$ are $\C_{A[1]} = \big\{ \{ \bm{\delta}\in\PP \ | \ \delta_1 = \nu_1 \} \ | \ \nu_1 \in \Delta_1 \big\}$ and $\C_{A[2]} = \big\{ \{ \bm{\delta}\in\PP \ | \ \delta_2 = \nu_2 \} \ | \ \nu_2 \in \Delta_2 \big\}$. We can visualise the poset block structure by arranging the points of $\PP$ in an $e_1 \times e_2$ grid, with the points $\bm{\delta}$ having the same $\delta_1$-coordinate lying in the same row, and those with the same $\delta_2$-coordinate lying in the same column. Then $\C_{A[1]}$ is the set of rows and $\C_{A[2]}$ is the set of columns, as shown in Figure \ref{fig:2grid}. This poset block structure is also called a \emph{grid} in \cite{grids22,Linsp09grid,BMSVV}.
\end{example}

\begin{example} \label{poset:2ch}
Let $(I,\preccurlyeq)$ be the chain $1 \prec 2$ (see Figure \ref{fig:2ch}), and let $G_i\leq \Sym(\Delta_i)$ for $i=1,2$. Then $A(2) = \varnothing$ is ancestral, and there is a unique proper nonempty ancestral subset, namely $A[2] =A(1) = \{2\}$, and the associated partition  of $\PP=\Delta_1\times \Delta_2$ is $\C_{A[2]} = \big\{ \{ \bm{\delta} \in\PP\ | \ \delta_2 = \nu_2 \} \ | \ \nu_2 \in \Delta_2 \big\}$.
It follows from the discussion above that  $F_1=G_1^{|\Delta_2|}$, $F_2=G_2$, and the generalised wreath product $F=\prod_{(I,\preccurlyeq)} (G_i,\Delta_i) \cong G_1 \wr G_2$.  The point set $\PP$ can again be visualised as the $e_1 \times e_2$ grid in Example \ref{poset:grid}, with the partition $\C_{A[2]}$ being the set of columns.
\end{example}

\subsection{Orbitals and orbits on unordered pairs}\label{s:orbs}

Let $(\PP,\CC)$ be  a poset block structure relative to $\I=(I,\preccurlyeq)$, with $\CC$ as in \eqref{posblkstr}  and point set $\PP=\Delta_I= \prod_{i \in I} \Delta_i$, such that $\I=(I,\preccurlyeq)$ satisfies the maximal condition (see Subsection~\ref{s:posets}). For each $i \in I$  let $G_i \leq \Sym(\Delta_i) = S_{e_i}$ where $|\Delta_i|=e_i\geq2$, and let $F=\prod_{(I,\preccurlyeq)} (G_i,\Delta_i)$ be the corresponding generalised wreath product as in \eqref{F}. In this subsection we assume that  each $G_i$ is $2$-transitive on $\Delta_i$, and we describe the $F$-orbits on ordered pairs (orbitals) and unordered pairs from $\PP$.

Recall that the border $\Bdy{J}$ of an ancestral subset $J\subseteq $ is the set of all maximal elements in $J^\c$. For example, for each $i\in I$, the border $\Bdy{A(i)}$ contains $i$, and by convention $\Bdy{I} = \varnothing$.
For each $i \in I$ let $D_i := \{ (\delta,\delta) \ | \ \delta \in \Delta_i \}$ and $E_i := \Delta_i \times \Delta_i$, and for any $J \subseteq I$ and $\Gamma_i \subseteq E_i$ define
	\[ 
    \otimes_{i \in J} \Gamma_i := \{ (\bm{\delta}, \bm{\varepsilon}) \in \PP \times \PP \ | \ \forall\,i \in J, \ (\delta_i, \varepsilon_i) \in \Gamma_i \}. \]
Since  $G_i$ is $2$-transitive on $\Delta_i$ for each $i$, it follows from \cite[Theorem C]{BPRS} that the $F$-orbits in $\PP \times \PP$ are precisely the sets $O_J$, for each $J \in \A(\I)$, where
	 \[ O_J = \left( \otimes_{i \in J} D_i \right) \otimes \left( \otimes_{i \in \Bdy{J}} (E_i \setminus D_i) \right) \otimes \left( \otimes_{i \in (J \cup \Bdy{J})^\c} E_i \right) 
     \]
(and this is not true if some $G_i$ is not $2$-transitive). Equivalently,
	\begin{align}
	O_J
	&= \left\{  (\bm{\delta}, \bm{\varepsilon})  \ | \ \delta_i = \varepsilon_i \ \forall\,i \in J \ \text{and} \ \delta_i \neq \varepsilon_i \ \forall\,i \in \Bdy{J} \right\} \label{O_J} \\
	&= \left\{ (\bm{\delta}, \bm{\varepsilon})  \ | \ \bm{\delta}\pi_J = \bm{\varepsilon} \pi_J \right\} \setminus \bigcup_{i \in \Bdy{J}} \left\{ (\bm{\delta}, \bm{\varepsilon}) \ | \ \bm{\delta} \pi_{J \cup \{i\}} = \bm{\varepsilon} \pi_{J \cup \{i\}} \right\} \notag \\
	&= \left\{  (\bm{\delta}, \bm{\varepsilon})  \ | \ \bm{\delta}, \bm{\varepsilon} \ \text{belong to the same $\C_J$-class but lie in different $\C_{J \cup \{i\}}$-classes} \right. \notag \\
	&\phantom{=}\;\, \left. \text{for all $i \in \Bdy{J}$} \right\}. \notag
	\end{align}
In particular, as noted above, $J = I$ is ancestral and $\Bdy{I} = \varnothing$, so $O_I = \{ (\bm{\delta}, \bm{\delta}) \mid \bm{\delta} \in \PP \}$, the diagonal orbital. Further, for each ancestral subset $J\subseteq I$, it follows from the expression for $O_J$ in \eqref{O_J} that $O_J$ is self-paired, that is to say, $(\bm{\delta}, \bm{\varepsilon})\in O_J$ if and only if $(\bm{\varepsilon}, \bm{\delta})\in O_J$. Thus the $F$-orbits on unordered pairs of distinct points are the sets 
\[
Q_J:= \{\ \{ \bm{\delta}, \bm{\varepsilon}\} \ \mid (\bm{\delta}, \bm{\varepsilon})\in O_J\}\quad \mbox{for proper ancestral subsets $J\subset I$, and $|Q_J|=|O_J|/2$.}
\]
Finally, in the case where $\I$ and $\PP$ are finite, we determine the cardinality of the orbital $O_J$ for each proper ancestral subset $J$, noting that $|O_I|=|\PP|$.

\begin{proposition} \label{p:orbit}
Suppose that $\I=(I,\preccurlyeq)$ is a finite poset, each $e_i=|\Delta_i|\geq 2$ is finite, and let $v := |\PP|=\prod_{i\in I} e_i$. Let $J \in \A(\I)\setminus\{I\}$ (possibly $J = \varnothing$), and let $O_J$ be as in \eqref{O_J}. Then
	 \begin{equation}\label{|O_J|}
	|O_J|
	= v \left( \prod_{i \in \Bdy{J}} (e_i - 1) \right) \left( \prod_{i \in (J \cup \Bdy{J})^\c} e_i \right)
	= v \sum_{S \subseteq \Bdy{J}} \Big( (-1)^{|S|} \cdot\prod_{i \in (J \cup S)^\c} e_i\Big).
	\end{equation}
\end{proposition}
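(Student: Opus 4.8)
The plan is to compute the cardinality of $O_J$ directly from its description in \eqref{O_J} as a set of ordered pairs $(\bm{\delta},\bm{\varepsilon})$, and then to recognise the resulting product as an instance of the elementary expansion $\prod_{i\in\Bdy{J}}(e_i-1)=\sum_{S\subseteq\Bdy{J}}(-1)^{|S|}\prod_{i\in S}e_i$. First I would fix $\bm{\delta}\in\PP$ arbitrarily (there are $v$ choices) and count the number of $\bm{\varepsilon}$ with $(\bm{\delta},\bm{\varepsilon})\in O_J$. By the first line of \eqref{O_J}, such $\bm{\varepsilon}$ are exactly those with $\varepsilon_i=\delta_i$ for all $i\in J$, with $\varepsilon_i\ne\delta_i$ for all $i\in\Bdy{J}$, and with $\varepsilon_i$ unconstrained for $i\in(J\cup\Bdy{J})^\c$ (here one uses that $J$, $\Bdy{J}$ and $(J\cup\Bdy{J})^\c$ partition $I$, which holds because $\Bdy{J}$ is the set of maximal elements of $J^\c$, so these three sets are pairwise disjoint with union $I$). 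Since the coordinates of $\bm{\varepsilon}$ are chosen independently, the number of admissible $\bm{\varepsilon}$ is $\prod_{i\in J}1\cdot\prod_{i\in\Bdy{J}}(e_i-1)\cdot\prod_{i\in(J\cup\Bdy{J})^\c}e_i$, which is independent of the choice of $\bm{\delta}$. Multiplying by $v$ gives the first equality in \eqref{|O_J|}.

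Next I would derive the second equality. Expanding $\prod_{i\in\Bdy{J}}(e_i-1)=\sum_{S\subseteq\Bdy{J}}(-1)^{|\Bdy{J}\setminus S|}\prod_{i\in S}e_i$; reindexing by the complementary subset $S':=\Bdy{J}\setminus S$ turns this into $\sum_{S'\subseteq\Bdy{J}}(-1)^{|S'|}\prod_{i\in\Bdy{J}\setminus S'}e_i$. Multiplying through by the fixed factor $\prod_{i\in(J\cup\Bdy{J})^\c}e_i$ and combining it with $\prod_{i\in\Bdy{J}\setminus S'}e_i$ gives $\prod_{i\in(J\cup S')^\c}e_i$, since $(J\cup S')^\c=(\Bdy{J}\setminus S')\cup(J\cup\Bdy{J})^\c$ as a disjoint union (again using that $J,\Bdy{J},(J\cup\Bdy{J})^\c$ partition $I$, and that $S'\subseteq\Bdy{J}$ so $S'$ is disjoint from $J$ and from $(J\cup\Bdy{J})^\c$). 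Renaming $S'$ back to $S$ and pulling the factor of $v$ inside the sum yields exactly the right-hand side of \eqref{|O_J|}.

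There is essentially no hard obstacle here; the content is purely combinatorial bookkeeping, and the only thing to be careful about is the identification of index sets under complementation — in particular verifying $(J\cup S)^\c=(\Bdy{J}\setminus S)\sqcup(J\cup\Bdy{J})^\c$ — and the harmless edge case $J=\varnothing$, where $\Bdy{\varnothing}$ is the set of maximal elements of $I$ (nonempty since $\I$ is finite, hence satisfies the maximal condition) and the formula still reads correctly. One should also note that the case $J=I$ is excluded by hypothesis (it is recorded separately that $|O_I|=v$), so $\Bdy{J}$ and $(J\cup\Bdy{J})^\c$ are handled uniformly for all $J\in\A(\I)\setminus\{I\}$. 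I would therefore present the argument as: (i) the coordinatewise count of $\bm{\varepsilon}$ for fixed $\bm{\delta}$; (ii) multiply by $v$; (iii) the binomial-type expansion and reindexing to obtain the alternating-sum form.
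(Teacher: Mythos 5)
Your argument is correct. For the first equality in \eqref{|O_J|} you proceed essentially as the paper does: the paper counts $|O_J|$ coordinatewise as $\bigl(\prod_{i\in J}e_i\bigr)\bigl(\prod_{i\in\Bdy{J}}e_i(e_i-1)\bigr)\bigl(\prod_{i\in(J\cup\Bdy{J})^\c}e_i^2\bigr)$, which is the same computation as your ``fix $\bm{\delta}$, count the admissible $\bm{\varepsilon}$, multiply by $v$'' (and both rely on the same observation that $J$, $\Bdy{J}$ and $(J\cup\Bdy{J})^\c$ partition $I$). Where you genuinely diverge is the second equality: you obtain it purely algebraically, by expanding $\prod_{i\in\Bdy{J}}(e_i-1)=\sum_{S\subseteq\Bdy{J}}(-1)^{|\Bdy{J}\setminus S|}\prod_{i\in S}e_i$ and reindexing by complements, using the disjoint decomposition $(J\cup S)^\c=(\Bdy{J}\setminus S)\,\dot\cup\,(J\cup\Bdy{J})^\c$; your bookkeeping here is correct. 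The paper instead derives the alternating-sum form set-theoretically, writing $O_J$ as $\{(\bm{\delta},\bm{\varepsilon})\mid\bm{\delta}\pi_J=\bm{\varepsilon}\pi_J\}$ minus a union over $i\in\Bdy{J}$ and applying the Principle of Inclusion--Exclusion to that union. Your route is shorter and more elementary for this proposition in isolation; the paper's route has the advantage that the same inclusion--exclusion argument is explicitly reused later in the proof of Proposition~\ref{p:2des} to evaluate $|(B\times B)\cap O_J|$, where the purely algebraic shortcut is not available because the summands there are array values rather than a fixed product. Either way, the proposition itself is fully established by your argument.
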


\begin{proof}
It follows from the first expression in \eqref{O_J} that
	\[
	|O_J|
	= \left( \prod_{i \in J} e_i \right) \left( \prod_{i \in \Bdy{J}} e_i(e_i - 1) \right) \left( \prod_{i \in (J \cup \Bdy{J})^\c} e_i^2 \right) 
	= v \left( \prod_{i \in \Bdy{J}} (e_i - 1) \right) \left( \prod_{i \in (J \cup \Bdy{J})^\c} e_i \right)
	\]
	with the convention that $\prod_{i \in \varnothing} e_i = \prod_{i \in \varnothing} (e_i-1)= 1$. Also we may use the second line of \eqref{O_J} to obtain the second expression for $|O_J|$. It follows from the Principle of Inclusion--Exclusion \cite[Chapter IV.2B Sieve Formulae]{Aig} that 
	\begin{align*}
	\Big|\bigcup_{i \in \Bdy{J}} \left\{  (\bm{\delta}, \bm{\varepsilon})  \ | \ \bm{\delta}\pi_{J \cup \{i\}} = \bm{\varepsilon} \pi_{J \cup \{i\}}  \right\}\Big|
	&= \sum_{\varnothing\ne S \subseteq \Bdy{J}} (-1)^{|S|-1} \Big| \bigcap_{i \in S} \{  (\bm{\delta}, \bm{\varepsilon})  \ | \ \bm{\delta}\pi_{J \cup \{i\}} = \bm{\varepsilon} \pi_{J \cup \{i\}}  \} \Big| \\
	&= \sum_{\varnothing\ne S \subseteq \Bdy{J}} (-1)^{|S|-1} \left| \{  (\bm{\delta}, \bm{\varepsilon})  \ | \ \bm{\delta}\pi_{J \cup S} = \bm{\varepsilon} \pi_{J \cup S}  \} \right| \\
	&= \sum_{\varnothing \ne S \subseteq \Bdy{J}} (-1)^{|S|-1} \cdot \Big(|\PP| \prod_{i \in (J \cup S)^\c} e_i\Big).
	\end{align*}
Moreover, the cardinality of $\left\{  (\bm{\delta}, \bm{\varepsilon})  \ | \ \bm{\delta}\pi_J = \bm{\varepsilon} \pi_J \right\}$ is $|\PP|\cdot \prod_{i \in (J \cup S)^\c} e_i$ with $S=\varnothing$. Thus we have 
	\begin{align}
	|O_J|
	&= \left|\left\{  (\bm{\delta}, \bm{\varepsilon})  \ | \ \bm{\delta}\pi_J = \bm{\varepsilon} \pi_J  \right\}\right| - \Big|\bigcup_{i \in \Bdy{J}} \left\{  (\bm{\delta}, \bm{\varepsilon})  \ | \ \bm{\delta}\pi_{J \cup \{i\}} = \bm{\varepsilon} \pi_{J \cup \{i\}}  \right\}\Big| \notag \\
	&= |\PP|\cdot \prod_{i \in (J \cup \varnothing)^\c} e_i - \sum_{\varnothing\ne S \subseteq \Bdy{J}} (-1)^{|S|-1} \cdot \Big(|\PP| \prod_{i \in (J \cup S)^\c} e_i\Big) \notag \\
		&= |\PP|\cdot  \sum_{S \subseteq \Bdy{J}} \Big( (-1)^{|S|} \cdot\prod_{i \in (J \cup S)^\c} e_i\Big) = v \sum_{S \subseteq \Bdy{J}} \Big( (-1)^{|S|} \cdot\prod_{i \in (J \cup S)^\c} e_i\Big) \label{eq:O_J2}
	\end{align}
which completes the proof.
\end{proof}

\subsection{Direct products of poset block structures}\label{s:dirpdt}

We generalise the observations in Examples~\ref{poset:grid} and~\ref{poset:2ch} to the cases where the elements $1$ and $2$ of $I$ are replaced with disjoint nonempty subsets $I_1$ and $I_2$. We continue to assume that the poset $\I=(I,\preccurlyeq)$ satisfies the maximal condition.
In this section suppose that  $I$ can be written as the disjoint union $I = I_1 \,\dot\cup\, I_2$  of two nonempty subsets such that:
    \begin{equation} \label{hyp:dir}
   \text{ \emph{there do not exist} $i_1\in I_1, i_2\in I_2$ with $i_1\prec i_2$ or $i_2\prec i_1$. }
    \end{equation}
Then $\PP = \Delta_{I_1} \times \Delta_{I_2}$ and, as noted in \cite[p. 74]{BPRS}, for subgroups $G_i\leq \Sym(\Delta_i)$ ($i\in I$), the generalised wreath product  $F = \prod_{(I,\preccurlyeq)} (G_i,\Delta_i)$ is the permutation direct product of the generalised wreath products $\ff[1] = \prod_{(I_1,\preccurlyeq)} (G_i,\Delta_i)$ and $\ff[2] = \prod_{(I_2,\preccurlyeq)} (G_i,\Delta_i)$. That is to say, $F = \ff[1] \times \ff[2]$. We call the poset block structure $(\PP,\CC)$ the \emph{direct product} of the poset block structures  $\big(\Delta_{I_1}, \cc[1]\big)$ and $\big(\Delta_{I_2}, \cc[2]\big)$, where, for each $i \in \{1,2\}$, $\cc[i] = \{ \C_J \ | \ J \in \A(I_i) \}$. We give more details and in particular we specify the ancestral sets explicitly. 

It follows from \eqref{hyp:dir} that the set $\mathcal{A}(I)$ of ancestral subsets of $I$ is as follows  (see Subsection~\ref{s:posets}):
    \begin{equation}\label{e:ancdir}
    \mathcal{A}(I) = \{ J_1 \ \dot\cup \ J_2 \ \mid \ J_1 \in \mathcal{A}(I_1), J_2 \in \mathcal{A}(I_2) \}. 
    \end{equation}
In particular, $I_1$ and $I_2$ are ancestral subsets of $I$. Also, for $j \in I_i$ we have $A(j)\subseteq I_i$, and hence $F_j$, as defined in \eqref{ptwise}, is the same  subgroup of $F$ and $\ff[i]$. For each  ancestral subset $J = J_1 \dot \cup J_2$ with each $J_i = J \cap I_i$ as in \eqref{e:ancdir}, the corresponding subgroup of $F$ is therefore: 
    \[
    F_J = \prod_{j\in J} F_j = \left( \prod_{j \in J_1} F_j \right) \times \left( \prod_{j \in J_2} F_j \right) = \ff[1]_{J_1} \times \ff[2]_{J_2}.
    \]
In particular, since $F_\varnothing = 1$ and each $\ff[i]_\varnothing = 1$,  taking $J=I_i$ we have $F_{I_i} = \ff[i]_{I_i}\times 1 =\ff[i]$, and taking $J=I$ yields $F=F_I= \ff[1]\times \ff[2]$.  

We also examine the partition $\C_{I_i}$ of $\PP$. By \eqref{partition}, the classes are $C_{\bm \nu} = \{ {\bm\delta} \in \PP \ | \ {\bm\delta}\pi_{I_i} = {\bm\nu} \}$, for ${\bm\nu} \in \Delta_{I_i}$. Clearly $\ff[3-i]$ fixes each $C_{\bm\nu}$ setwise and the $\ff[3-i]$-action on $C_{\bm \nu}$ is equivalent to its action on $\Delta_{I_{3-i}}$. Also the group $\ff[i]$ permutes the classes of $\C_{I_i}$, and the $\ff[i]$-action on $\C_{I_i}$ is equivalent to its action on $\Delta_{I_{i}}$. We summarise these remarks in the following theorem. 
\begin{theorem} \label{thm:dir}
Suppose that the poset $(I,\preccurlyeq)$ satisfies the maximal condition and that $I$ is the disjoint union $I = I_1 \,\dot\cup\, I_2$ of two non-empty subsets such that  \eqref{hyp:dir} holds. Then the set of ancestral subsets of $I$ is as in \eqref{e:ancdir}, $F = F_{I_1} \times F_{I_2} = \ff[1] \times \ff[2]$, and moreover:
\begin{enumerate}[(a)]
    \item $I_1$ and $I_2$ are ancestral subsets of $I$, and for $i \in \{1,2\}$, the partition $\C_{I_i}$ of $\PP$, with classes as in \eqref{partition}, is $F$-invariant and $F^{\C_{I_i}}$ is permutationally isomorphic to the $\ff[i]$-action on $\Delta_{I_i}$; 
    \item and for $i \in \{1,2\}$,  the group induced on each class $C \in \C_{I_i}$ by its setwise stabiliser $F_C$ is permutationally isomorphic to the $\ff[3-i]$-action on $\Delta_{I_{3-i}}$.
\end{enumerate}
\end{theorem}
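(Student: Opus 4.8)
The plan is to assemble the facts already recorded in this subsection together with the general properties of generalised wreath products recalled in Subsection~\ref{ss:genwr}; almost nothing new is needed, so the main task is to organise the argument cleanly.

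First I would establish \eqref{e:ancdir}. Given $J\subseteq I$, set $J_i:=J\cap I_i$; by \eqref{hyp:dir}, if $j\in J_i$ and $j\prec k$ then $k\in I_i$ (it cannot lie in $I_{3-i}$), so $J$ is ancestral in $I$ precisely when $J_1\in\A(I_1)$ and $J_2\in\A(I_2)$; taking $J_2=\varnothing$ (resp.\ $J_1=\varnothing$) shows that $I_1$ and $I_2$ are ancestral. The same observation shows that, for $j\in I_i$, the set $A(j)$ lies in $I_i$ and equals the corresponding ancestral set of the subposet $(I_i,\preccurlyeq)$; hence the group $F_j$ of \eqref{ptwise} is literally the same whether built inside $\I$ or inside $(I_i,\preccurlyeq)$. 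It follows that $F=\prod_{j\in I}F_j=\bigl(\prod_{j\in I_1}F_j\bigr)\times\bigl(\prod_{j\in I_2}F_j\bigr)=\ff[1]\times\ff[2]$ as abstract groups, and more generally $F_J=\ff[1]_{J_1}\times\ff[2]_{J_2}$ for each ancestral $J=J_1\,\dot\cup\,J_2$.

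The one step I expect to require genuine care --- and the only place where hypothesis \eqref{hyp:dir} enters essentially --- is verifying that this decomposition is compatible with the action \eqref{action}, i.e.\ that $F$ on $\PP=\Delta_{I_1}\times\Delta_{I_2}$ is permutationally the product action of $\ff[1]\times\ff[2]$. Writing $\bm{\delta}=(\bm{\alpha},\bm{\beta})$ and $f=(f^{(1)},f^{(2)})$ with $f^{(i)}\in\ff[i]$, for $j\in I_i$ the projection $\bm{\delta}\pi_{A(j)}$ depends only on $\bm{\alpha}$ when $i=1$ (only on $\bm{\beta}$ when $i=2$), since $A(j)\subseteq I_i$; so \eqref{action} yields $\bm{\delta}f=(\bm{\alpha}f^{(1)},\bm{\beta}f^{(2)})$, where $\bm{\alpha}f^{(1)}$ denotes the $\ff[1]$-action on $\Delta_{I_1}$ as in \eqref{action-J}, and similarly for $\bm{\beta}f^{(2)}$. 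In particular each factor $\ff[i]$ acts on $\Delta_{I_i}$ in the natural way and trivially on $\Delta_{I_{3-i}}$.

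Granting this, parts (a) and (b) are bookkeeping. For (a): $I_i$ is ancestral, and $f=(f^{(1)},f^{(2)})$ sends the class $C_{\bm{\nu}}$ of $\C_{I_i}$ (see \eqref{partition}) to $C_{\bm{\nu}f^{(i)}}$, so $\C_{I_i}$ is $F$-invariant; transporting along $\C_{I_i}\to\Delta_{I_i}$, $C_{\bm{\nu}}\mapsto\bm{\nu}$, the induced $F$-action on $\C_{I_i}$ becomes the $\ff[i]$-action on $\Delta_{I_i}$, with kernel the factor $\ff[3-i]$ (the faithfulness of $\ff[i]$ on $\Delta_{I_i}$ noted in Subsection~\ref{ss:genwr} leaving nothing more). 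Alternatively one may simply invoke \cite[Lemma 2]{BPRS} and \cite[Theorem A (ii)]{BPRS} together with $F_{I_i}=\ff[i]$. For (b): fix $C=C_{\bm{\nu}}\in\C_{I_i}$ with $\bm{\nu}\in\Delta_{I_i}$; by the product-action description $F_C=(\ff[i])_{\bm{\nu}}\times\ff[3-i]$, and identifying $C$ with $\Delta_{I_{3-i}}$ via $\pi_{I_{3-i}}$, an element $(f^{(i)},f^{(3-i)})\in F_C$ acts on $C$ by $\bm{\beta}\mapsto\bm{\beta}f^{(3-i)}$, since its $\ff[i]$-component fixes $\bm{\nu}$. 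Thus the $F_C$-action on $C$ factors through the surjective projection $F_C\to\ff[3-i]$, and since $\ff[3-i]$ is faithful on $\Delta_{I_{3-i}}$, the induced group $F_C^{C}$ is permutationally isomorphic to the $\ff[3-i]$-action on $\Delta_{I_{3-i}}$. Beyond the product-action check there is no real obstacle: everything else is a matter of matching notation with the results cited in Subsection~\ref{ss:genwr}.
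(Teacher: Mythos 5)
Your proposal is correct and follows essentially the same route as the paper, whose proof is the discussion preceding the theorem statement: identify the ancestral subsets via \eqref{hyp:dir}, note that $A(j)\subseteq I_i$ for $j\in I_i$ so that $F_j$ is unchanged, decompose $F_J=\ff[1]_{J_1}\times\ff[2]_{J_2}$, and then read off (a) and (b) from the product action on $\Delta_{I_1}\times\Delta_{I_2}$. The only (harmless) difference is that you verify the compatibility of the decomposition with the action \eqref{action} directly from the definition, whereas the paper outsources that step to the remark on \cite[p.~74]{BPRS}.
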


\subsection{Kronecker products of poset block structures}\label{s:kron}

Suppose now that the poset $(I,\preccurlyeq)$ satisfies the maximal condition and that $I$  can be written as the disjoint union $I = I_1 \,\dot\cup\, I_2$  of two nonempty subsets such that:
    \begin{equation} \label{hyp:wr}
  \text{ \emph{for all $i_1 \in I_1$ and $i_2 \in I_2$\quad we have} } \quad i_1 \prec i_2.
    \end{equation}
Then again $\PP = \Delta_{I_1} \times \Delta_{I_2}$ 
 and, as noted in \cite[p. 74]{BPRS}, {for subgroups $G_i\leq \Sym(\Delta_i)$ ($i\in I$), the generalised wreath product  $F = \prod_{(I,\preccurlyeq)}(G_i,\Delta_i)$ is the permutation wreath product of the generalised wreath products $\ff[1]=\prod_{(I_1,\preccurlyeq)}(G_i,\Delta_i)$ and $\ff[2]=\prod_{(I_2,\preccurlyeq)}(G_i,\Delta_i)$}. That is to say, $F=\ff[1]\wr \ff[2]$.
We call the poset block structure $(\PP,\CC)$ the \emph{Kronecker product} of the poset block structures  $(\Delta_{I_1}, \CC_{I_1})$ and   $(\Delta_{I_2}, \CC_{I_2})$. We give more details and in particular specify the ancestral sets explicitly. 

It follows from \eqref{hyp:wr} that the set  $\mathcal{A}(I)$ of ancestral subsets of $I$ is as follows  (see Subsection~\ref{s:posets}):
\begin{equation}\label{e:ancwr}
    \mathcal{A}(I) = \{ J \ \dot\cup \ I_2 \ \mid \ J \in \mathcal{A}(I_1), J \ne \varnothing \} \ \dot\cup \ \mathcal{A}(I_2). 
\end{equation}
In particular,  $I_2$ is an ancestral subset of $I$, and for each $j \in I_1$, we have $I_2 \subseteq A(j)$ and $A'(j) := A(j) \cap I_1$ is the set of all `ancestors' of $j$ in the sub-poset $(I_1,\preccurlyeq)$. Moreover $\Delta_{A(j)} = \Delta_{A'(j)} \times \Delta_{I_2}$. Thus
    \[ 
    \ff[1] = \prod_{(I_1,\preccurlyeq)} (G_i,\Delta_i) = \prod_{j \in I_1} G_j^{|\Delta_{A'(j)}|},\quad \mbox{and}\quad \ff[2] =  \prod_{(I_2,\preccurlyeq)} (G_i,\Delta_i) = \prod_{j \in I_2} G_j^{|\Delta_{A(j)}|} = F_{I_2} < F. 
    \]
Even though $I_1$ is not an ancestral subset of $I$, writing $F_{I_1} := \prod_{j \in I_1} F_j$ as in \eqref{F}, we have
    \[ 
    F_{I_1} = \prod_{j \in I_1} F_j = \prod_{j \in I_1} G_j^{|\Delta_{A(j)}|} = \prod_{j \in I_1} G_j^{|\Delta_{A'(j)} \times \Delta_{I_2}|} = \left( \prod_{j \in I_1} G_j^{|\Delta_{A'(j)}|} \right)^{|\Delta_{I_2}|} = \left( \ff[1] \right)^{|\Delta_{I_2}|}, 
    \]
    and as a set $F$ is the cartesian product
    \[
    F=  \prod_{j \in I} F_j = \left( \ff[1] \right)^{|\Delta_{I_2}|} \times \prod_{j \in I_2} F_j =  \left( \ff[1] \right)^{|\Delta_{I_2}|} \times \ff[2].
    \]
Now we examine the partition $\C_{I_2}$ of $\PP$. By \eqref{partition}, the classes are $C_{\bm{\nu}} =\{ {\bm \delta} \in \PP \ \mid \ {\bm\delta}\pi_{I_2} = {\bm \nu} \}$, for ${\bm \nu}\in  \Delta_{I_2}$. The group induced by $F$ on $\C_{I_2}$ is permutationally isomorphic to the action of $\ff[2] = F_{I_2}$ on $\Delta_{I_2}$, and the kernel of this $F$-action is $F_{I_1}$. To examine the action of this kernel, let $f \in F_{I_1}$,  $C_{\bm{\nu}} \in \C_{I_2}$, and ${\bm \delta} = (\delta_i)_{i\in I} \in C_{\bm{\nu}}$, so that $\delta_i = \nu_i$, and  $f_i = \iota_i$, for all $i\in I_2$. Also let ${\bm\varepsilon} = {\bm \delta} f$. Then it follows from the definition of the action \eqref{action} that, for each  $i \in I_2$,
    \[ 
    \varepsilon_i = \delta_i \left( {\bm \delta}\pi_{A(i)} f_i \right) = \delta_i \left(  {\bm \delta}\pi_{A(i)} \iota_i \right) = \delta_i 1_{G_i} = \delta_i = \nu_i 
    \]
so ${\bm \varepsilon} = {\bm \delta} f\in C_{\bm{\nu}}$. On the other hand, for $i \in I_1$, and using the fact that $A'(i) := A(i) \cap I_1$, we have
    \[ 
    \varepsilon_i = \delta_i \left(  {\bm \delta}\pi_{A(i)} f_i \right) = \delta_i \left(  {\bm \delta}\pi_{A'(i)} f'_i \right)  \quad \text{for some $f'_i \in \ff[1]$}, 
    \]
and each element of the group $\ff[1]$ arises for different choices of $f$ in $F_{I_1}$. Thus the group induced by $F_{I_1}$ on $C_{\bm{\nu}}$ is permutationally isomorphic to the $\ff[1]$-action on $\Delta_{I_1}$. Since $F_{I_1} \cong (\ff[1])^{|\Delta_{I_2}|}$, it follows that $F \cong \ff[1] \wr \ff[2]$ with the induced group $F^{\C_{I_2}}$ permutationally isomorphic to $(\ff[2])^{\Delta_{I_2}} = (F_{I_2})^{\Delta_{I_2}}$, and the group induced on each class of $\C_{I_2}$ permutationally isomorphic to $(\ff[1])^{\Delta_{I_1}}$. We summarise these observations in Theorem \ref{thm:wr}, using this notation.

\begin{theorem} \label{thm:wr}
Suppose that the poset $(I,\preccurlyeq)$ satisfies the maximal condition and that $I$ is the disjoint union $I = I_1 \,\dot\cup\, I_2$ of two non-empty subsets such that \eqref{hyp:wr} holds. Then  the set of ancestral subsets of $I$ is as in \eqref{e:ancwr},  $F \cong \ff[1] \wr \ff[2]$, and moreover:
\begin{enumerate}[(a)]
    \item $I_2$ is an ancestral subset of $I$,  the partition $\C_{I_2}$ of $\PP$ with classes as in \eqref{partition} is $F$-invariant, and $F^{\C_{I_2}}$ is permutationally isomorphic to the $\ff[2]$-action on $\Delta_{I_2}$; and 
    \item for $j \in I_1$, the ancestral set $A(j) = A'(j) \cup I_2$, where $A'(j) = A(j) \cap I_1$ is the ancestral set for $j$ relative to the sub-poset $(I_1,\preccurlyeq)$, and the  group induced on each class $C \in \C_{I_2}$ by its setwise stabiliser $F_C$ is permutationally isomorphic to the $\ff[1]$-action on $\Delta_{I_1}$.
\end{enumerate}
\end{theorem}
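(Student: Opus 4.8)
The plan is to organise, as a proof, the discussion that precedes the statement, in three steps: (1) describe the ancestral subsets of $I$; (2) exhibit a coordinate factorization identifying $F$ as the permutation wreath product $\ff[1]\wr\ff[2]$; and (3) deduce (a) and (b) from the general properties of generalised wreath products in Subsection~\ref{ss:genwr}, applied to the ancestral subset $I_2$. For Step 1, let $J\in\A(\I)$. If $J\cap I_1\ne\varnothing$, pick $j\in J\cap I_1$; by \eqref{hyp:wr} every $i\in I_2$ satisfies $j\prec i$, so ancestrality of $J$ forces $I_2\subseteq J$, and then $J_1:=J\cap I_1$ is ancestral in $(I_1,\preccurlyeq)$, since its defining condition involves only comparabilities inside $I_1$. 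Conversely, for any nonempty ancestral $J_1\subseteq I_1$ the set $J_1\,\dot\cup\,I_2$ is ancestral in $\I$ (the only new relations to check, those going from $I_1$ to $I_2$, land in $I_2$ by \eqref{hyp:wr}), and any ancestral $J\subseteq I_2$ is ancestral in $\I$ because no element of $I_1$ lies above an element of $I_2$. This is \eqref{e:ancwr}; in particular $I_2\in\A(\I)$, while $I_1\notin\A(\I)$ since any $j\in I_1$ has all of $I_2$ above it.

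For Step 2, note that for $j\in I_1$ we have $I_2\subseteq A(j)$, so $A(j)=A'(j)\,\dot\cup\,I_2$ with $A'(j):=A(j)\cap I_1$, whence $\Delta_{A(j)}=\Delta_{A'(j)}\times\Delta_{I_2}$ and $F_j=G_j^{|\Delta_{A(j)}|}\cong\bigl(G_j^{|\Delta_{A'(j)}|}\bigr)^{|\Delta_{I_2}|}$. Multiplying over $j\in I_1$ gives $F_{I_1}:=\prod_{j\in I_1}F_j\cong(\ff[1])^{|\Delta_{I_2}|}$. For $j\in I_2$, instead, $A(j)\subseteq I_2$, so each $F_j$ is literally the same subgroup inside $F$ as inside $\ff[2]$, and $\prod_{j\in I_2}F_j=\ff[2]=F_{I_2}$. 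Hence, as a set, $F=\prod_{j\in I}F_j=(\ff[1])^{|\Delta_{I_2}|}\times\ff[2]$; that this coincides with the wreath product \emph{as a permutation group} is checked via the action on $\C_{I_2}$ in Step 3.

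For Step 3, since $I_2$ is ancestral, the facts recalled in Subsection~\ref{ss:genwr} (from \cite[Lemma~2, Theorem~A]{BPRS}) give immediately that $\C_{I_2}$ is $F$-invariant, that $F^{\C_{I_2}}$ is permutationally isomorphic to $\ff[2]=F_{I_2}$ on $\Delta_{I_2}$, and that the kernel of this action is $F_{(\C_{I_2})}=F_{I_1}$ — which is part (a). For (b), fix $\bm{\nu}\in\Delta_{I_2}$ with class $C_{\bm{\nu}}$ and take $f\in F_{I_1}$, so $f_i=\iota_i$ for all $i\in I_2$. For $\bm{\delta}\in C_{\bm{\nu}}$, formula \eqref{action} yields $\bm{\varepsilon}:=\bm{\delta}f$ with $\varepsilon_i=\delta_i$ for $i\in I_2$ (so $\bm{\varepsilon}\in C_{\bm{\nu}}$) and $\varepsilon_i=\delta_i\bigl(\bm{\delta}\pi_{A(i)}f_i\bigr)$ for $i\in I_1$; since $\bm{\delta}$ agrees with the fixed tuple $\bm{\nu}$ on all of $I_2$, the projection $\bm{\delta}\pi_{A(i)}=\bm{\delta}\pi_{A'(i)\cup I_2}$ depends only on $\bm{\delta}\pi_{A'(i)}$. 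Thus the $F_{I_1}$-action on $C_{\bm{\nu}}$, identified with $\Delta_{I_1}$, is precisely the $\ff[1]$-action on $\Delta_{I_1}$, and every element of $\ff[1]$ is realised as $f$ ranges over $F_{I_1}$; together with the observation $A(j)=A'(j)\cup I_2$ for $j\in I_1$, this is part (b). Putting the pieces together: $F_{I_1}\cong(\ff[1])^{|\Delta_{I_2}|}$ fixes every class of $\C_{I_2}$ and induces the full $\ff[1]$ on each, while $F/F_{I_1}\cong\ff[2]$ permutes the classes as $\ff[2]$ permutes $\Delta_{I_2}$ — exactly the permutation wreath product $F\cong\ff[1]\wr\ff[2]$, completing Step 2 and the proof.

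I do not expect a genuine obstacle here: Subsection~\ref{ss:genwr} has already done the substantive work (invariance of $\C_J$ for ancestral $J$, the induced action, and the identification of its kernel), so what remains is organisation. The one point needing care is the bookkeeping in Steps 2–3 — the group $F_{I_1}=\prod_{j\in I_1}F_j$ must be handled as a genuine product of subgroups of $F$, not as a generalised wreath product attached to an ancestral set, since $I_1$ itself is not ancestral — and the factorization $\Delta_{A(j)}=\Delta_{A'(j)}\times\Delta_{I_2}$, which is the only place \eqref{hyp:wr} is used beyond Step 1, is precisely what turns $\prod_{j\in I_1}F_j$ into $(\ff[1])^{|\Delta_{I_2}|}$ and hence the whole group into the wreath product.
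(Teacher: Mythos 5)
Your proposal is correct and follows essentially the same route as the paper: the paper's "proof" is the discussion in Subsection~\ref{s:kron} immediately preceding the statement, which you have simply reorganised into the determination of $\A(\I)$ via \eqref{hyp:wr}, the factorisation $\Delta_{A(j)}=\Delta_{A'(j)}\times\Delta_{I_2}$ giving $F_{I_1}\cong(\ff[1])^{|\Delta_{I_2}|}$, and the computation of the induced actions on $\C_{I_2}$ and on each class via \eqref{action} and the cited results of \cite{BPRS}. Your explicit verification of \eqref{e:ancwr} is slightly more detailed than the paper's, which merely asserts it; otherwise the two arguments coincide.
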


It follows from Theorems~\ref{thm:dir} and \ref{thm:wr} that the groups $\prod_{(I,\preccurlyeq)}(G_i,\Delta_i)$ for the posets in  Figures \ref{fig:posets-2}, \ref{fig:posets-3}  are as given in the captions.

\section{Array function of a block}

Let $B\subseteq \PP$. Let $J$ be an ancestral subset and $\bm{\nu} \in \Delta_J$. 
 It follows from the definition \eqref{arrayfn} of the array function $\chi_B$ and the definition \eqref{partition} of the class $C_{\bm{\nu}}$ that
    \begin{equation}
    (\bm{\nu})\chi_B = |B \cap C_{\bm{\nu}}| \ \ \text{for every} \ C_{\bm{\nu}} \in \C_J.
    \end{equation}
This function describes how the points of $B$ are distributed among the parts of each partition $\C_J$ where $J$ is ancestral.

In particular, $J = I$ is an ancestral subset, with $\Delta_I = \PP$, so
    \begin{equation} \label{eq:array-I}
    \mbox{if $\bm{\delta} \in \PP = \Delta_I$\;\; then} \;\;
    (\bm{\delta})\chi_B =
        \begin{cases}
        1 &\text{if $\bm{\delta} \in B$} \\
        0 &\text{otherwise}.
        \end{cases}
    \end{equation}    
On the other hand, for $J = \varnothing$, recall that $\Delta_\varnothing$ is a singleton set and $\C_\varnothing = \{\PP\}$, so 
    \begin{equation} \label{eq:array-empty}
     \mbox{if $\bm{\nu} \in \Delta_\varnothing$\;\; then} \;\;
     (\bm{\nu})\chi_B = |B \cap \PP| = |B| = k.
    \end{equation}

Note that Equation \eqref{2des} involves the sum $\sum_{\bm{\nu} \in \Delta_{J \cup S}}  \left((\bm{\nu})\chi_B \right)^2$, where for every choice of $S$ the set $J\cup S$ is itself ancestral. Moreover, the formulae for various ancestral sets $J$ might use the same sums (as we will see in the Section \ref{sec:ex}).
It is thus convenient to introduce the function $\mu_B$ from $\A(\I)$ to $\mathbb{Z}$, defined by
\begin{equation}\label{eq:sum-squares}
\Sq[J]=\sum_{\bm{\nu} \in \Delta_{J}}  \left((\bm{\nu})\chi_B \right)^2.
\end{equation}
We have the following lemma. 

\begin{lemma} \label{lem:array-sqs}
    For any $k$-subset $B$ of $\PP = \Delta_I$, and $\mu_B$ as in \eqref{eq:sum-squares},
        \[ 
        \Sq[I] = k \quad \mbox{and} \quad \Sq[\varnothing] = k^2. 
        \]
\end{lemma}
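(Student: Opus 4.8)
The plan is to unwind the definition \eqref{eq:sum-squares} of $\mu_B$ at the two extreme ancestral subsets $I$ and $\varnothing$, feeding in the explicit values of the array function $\chi_B$ recorded in \eqref{eq:array-I} and \eqref{eq:array-empty}.

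First I would handle $J = I$. Since $I$ is ancestral with $\Delta_I = \PP$, equation \eqref{eq:array-I} says that $(\bm{\delta})\chi_B \in \{0,1\}$ for every $\bm{\delta} \in \Delta_I$, taking the value $1$ precisely when $\bm{\delta} \in B$. Consequently $\big((\bm{\delta})\chi_B\big)^2 = (\bm{\delta})\chi_B$ for all such $\bm{\delta}$, and summing over $\Delta_I$ as in \eqref{eq:sum-squares} gives
\[
\Sq[I] \;=\; \sum_{\bm{\delta} \in \Delta_I} \big((\bm{\delta})\chi_B\big)^2 \;=\; \sum_{\bm{\delta} \in \Delta_I} (\bm{\delta})\chi_B \;=\; |\{\bm{\delta} \in \PP \ | \ \bm{\delta} \in B\}| \;=\; |B| \;=\; k.
\]

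Next I would handle $J = \varnothing$. Here $\varnothing$ is ancestral and $\Delta_\varnothing$ is a singleton set, so the sum in \eqref{eq:sum-squares} consists of the single term indexed by the unique $\bm{\nu} \in \Delta_\varnothing$. By \eqref{eq:array-empty} that term is $\big((\bm{\nu})\chi_B\big)^2 = |B|^2 = k^2$, hence $\Sq[\varnothing] = k^2$.

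I do not anticipate a genuine obstacle: the lemma is an immediate consequence of the definitions, the only mildly noteworthy point being the observation that $\chi_B$ is $\{0,1\}$-valued on $\Delta_I$, so that replacing each $\chi_B$-value by its square does not change the sum in the $J=I$ case.
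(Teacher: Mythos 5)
Your proposal is correct and follows essentially the same route as the paper: both simply evaluate the defining sum \eqref{eq:sum-squares} using \eqref{eq:array-I} for $J=I$ and \eqref{eq:array-empty} for $J=\varnothing$. The only cosmetic difference is that the paper sums $1^2$ over $\bm{\delta}\in B$ directly, whereas you sum over all of $\Delta_I$ and note that the $\{0,1\}$-valued function is unchanged by squaring; these are trivially equivalent.
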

\begin{proof}
It follows from \eqref{eq:array-I} and \eqref{eq:sum-squares} that
    $\Sq[I] = \sum_{\bm{\delta} \in B} 1^2 =  k $, and from \eqref{eq:array-empty} that
    $\Sq[\varnothing] = \sum_{\bm{\nu} \in \Delta_\varnothing} k^2 = k^2$. \qedhere 
\end{proof}

\section{Proof of Theorem \ref{t:gwp-design}} \label{s:proof}

We prove Theorem~\ref{t:gwp-design} in two stages. First we formalise the hypotheses of this result.

\begin{hypothesis}\label{h:2des}
    Let $(I,\preccurlyeq)$ be a finite partially ordered set  with $|I|\geq 2$, and  for each $i \in I$ let $\Delta_i$ be a finite set of size $|\Delta_i|=e_i \geq 2$, and let $G_i \leq \Sym(\Delta_i)$ such that $G_i$ is $2$-transitive on $\Delta_i$. Let $\PP = \prod_{i \in I} \Delta_i$, of size $v = \prod_{i \in I} e_i$, and let $F = \prod_{(I,\preccurlyeq)} (G_i,\Delta_i)$ be the generalised wreath product as described in Section~\ref{ss:genwr}. Let $B$ be a $k$-subset of $\PP$, let $\B := B^F$, and let $\D = (\PP,\B)$ be the corresponding incidence structure. 
\end{hypothesis}

Next we prove a  slightly weaker version of Theorem~\ref{t:gwp-design}. The only differences are that Proposition \ref{p:2des} involves the function $\mu_B$ from \eqref{eq:sum-squares} while Theorem~\ref{t:gwp-design} involves the array function $\chi_B$, and that Proposition \ref{p:2des} includes a condition for $J=\varnothing$. Recall that for any ancestral subset $J$ and any subset $S \subset \Bdy{J}$, the union $J \cup S$ is also an ancestral subset.

\begin{proposition} \label{p:2des} 
Suppose that Hypothesis~\ref{h:2des} holds, and let $\mu_B$ be as  in \eqref{eq:sum-squares}. Then $\D = (\PP,\B)$ is a $2$-design if and only if the following condition holds for each proper ancestral subset $J$ of $I$:
    \begin{equation} \label{2deswithmu}
       \sum_{S \subseteq \Bdy{J}}(-1)^{|S|} \Sq[J\cup S]
    = \frac{k(k-1)}{v-1} \left( \prod_{i \in \Bdy{J}} (e_i - 1) \right) \left( \prod_{j \in (J \cup \Bdy{J})^\c} e_j \right),
    \end{equation}
where $J^\c$ denotes the complement of $J$ in $I$, and $\Bdy{J}$ is the set of all maximal elements in $J^\c$.
\end{proposition}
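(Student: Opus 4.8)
The plan is to verify that $\D = (\PP,\B)$ is a $2$-design precisely when the incidence structure is ``regular'' on pairs of points, and then to express that pair-regularity in terms of $\mu_B$. Since $F$ is transitive on $\PP$ and on $\B$ (blocks are a single $F$-orbit), by Block's theorem it suffices to check $2$-transitivity-type counting: $\D$ is a $2$-design if and only if, for every pair of distinct points $\{\bm\delta,\bm\varepsilon\}$, the number $\lambda(\bm\delta,\bm\varepsilon)$ of blocks containing both depends only on... well, it need not be constant a priori, but it will be constant iff it takes the same value on each $F$-orbit $Q_J$ on unordered pairs \emph{and} those values coincide. So the first step is to compute, for each proper ancestral subset $J$, the ``block count on $Q_J$'', call it $\lambda_J$, via a standard double-counting: count incident triples $(\{\bm\delta,\bm\varepsilon\}, C)$ with $\{\bm\delta,\bm\varepsilon\}$ a pair from a fixed $F$-orbit lying inside a block $C \in \B$, in two ways.

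**Key counting step.** The plan is to count pairs $(\{\bm\delta,\bm\varepsilon\},g)$ with $g \in F$, $\{\bm\delta,\bm\varepsilon\} \subseteq B^g$, and $(\bm\delta,\bm\varepsilon) \in O_J$. Summing $\lambda(\bm\delta,\bm\varepsilon)$ over $(\bm\delta,\bm\varepsilon)\in O_J$ and over the multiplicity with which each block arises, one gets, on the one hand, $|O_J|$ times the (putative constant) value times $|F|/|\B|$, and on the other hand $|F|$ times the number of ordered pairs $(\bm\delta,\bm\varepsilon)\in O_J$ with both in $B$ — which is exactly $\sum_{\bm\nu\in\Delta_J}((\bm\nu)\chi_B)((\bm\nu)\chi_B - 1)\cdot(\text{correction for the }\Bdy{J}\text{ conditions})$. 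Here is where the inclusion--exclusion over $S\subseteq\Bdy J$ enters: the number of ordered pairs of distinct points of $B$ agreeing on $\pi_J$ but differing on each coordinate in $\Bdy J$ is
\[
\sum_{S\subseteq\Bdy{J}}(-1)^{|S|}\sum_{\bm\nu\in\Delta_{J\cup S}}\big((\bm\nu)\chi_B\big)^2 \;-\; (\text{the }S=\varnothing,\text{ diagonal term }k),
\]
and one checks the diagonal (``$\bm\delta=\bm\varepsilon$'') contributions cancel in the alternating sum when $J \ne I$, so the count is exactly $\sum_{S\subseteq\Bdy J}(-1)^{|S|}\mu_B(J\cup S)$. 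Equating the two counts and using Proposition~\ref{p:orbit} for $|O_J|$ gives: $\D$ has a constant pair-count $\lambda$ iff
\[
\sum_{S\subseteq\Bdy{J}}(-1)^{|S|}\mu_B(J\cup S) \;=\; \frac{\lambda\,|\B|}{|F|}\cdot |O_J|
\]
for all proper ancestral $J$, where $\lambda|\B|/|F| = \lambda/|\Stab_F(B)|$.

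**Pinning down $\lambda$ and closing the loop.** To convert ``constant pair-count'' into the explicit right-hand side of \eqref{2deswithmu}, the plan is to determine the proportionality constant by a global count (Fisher-type identity): summing the pair-equation over \emph{all} $\binom{v}{2}$ pairs, or equivalently summing $|O_J|$ over all proper ancestral $J$ (which totals $v(v-1)$, the number of ordered pairs of distinct points), and noting $\sum_J\sum_{S}(-1)^{|S|}\mu_B(J\cup S)$ telescopes to $\mu_B(\varnothing) - \mu_B(I) = k^2 - k = k(k-1)$ by Lemma~\ref{lem:array-sqs}. This forces $\lambda|\B|/|F| = k(k-1)/(v(v-1))$, i.e. $\lambda = k(k-1)\cdot\binom{v}{k\text{-worth}}$... more precisely $\lambda/|\Stab_F(B)| = k(k-1)/(v(v-1))$, so that $\frac{\lambda|\B|}{|F|}|O_J| = \frac{k(k-1)}{v-1}\big(\prod_{i\in\Bdy J}(e_i-1)\big)\big(\prod_{j\in(J\cup\Bdy J)^\c}e_j\big)$ using the first formula of \eqref{|O_J|}. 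That is exactly \eqref{2deswithmu}, including the $J=\varnothing$ instance (which, via Lemma~\ref{lem:array-sqs}, reduces to the tautology $k^2 - k\cdot(\text{something}) = k(k-1)$ and so is automatically satisfied, matching the remark that Proposition~\ref{p:2des} carries the extra $J=\varnothing$ condition harmlessly).

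**Main obstacle.** The genuinely delicate point is the bookkeeping in the inclusion--exclusion: one must be careful that the $F$-orbits $O_J$ on ordered pairs (from Subsection~\ref{s:orbs}) are correctly matched with the sets $\{(\bm\delta,\bm\varepsilon): \bm\delta\pi_{J\cup S} = \bm\varepsilon\pi_{J\cup S}\}$, that the diagonal/degenerate terms are handled consistently (they survive in $\mu_B(I) = k$ but must be shown to drop out of every alternating sum for $J\subsetneq I$), and that ``the pair-count is constant on each orbit'' together with ``the orbit-values satisfy the displayed identities'' really is equivalent to ``the pair-count is globally constant'' — this last equivalence is where one needs the telescoping/Fisher argument rather than just orbit-by-orbit reasoning, since a priori the $|O_J|+1$ numbers $\{\lambda_J\}$ need not agree. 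Everything else is a routine (if lengthy) application of Proposition~\ref{p:orbit}, Lemma~\ref{lem:array-sqs}, and double counting.
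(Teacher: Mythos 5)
Your proposal is correct and follows essentially the same route as the paper: identify the $F$-orbits $O_J$ on ordered pairs, reduce ``$2$-design'' to the constancy of $|(B\times B)\cap O_J|/|O_J|$ over proper ancestral $J$, pin down the constant as $k(k-1)/(v(v-1))$ by summing over all orbits, and evaluate $|(B\times B)\cap O_J|$ by inclusion--exclusion over $S\subseteq\Bdy{J}$ together with Proposition~\ref{p:orbit}; the only difference is that you re-derive the orbit criterion by double counting where the paper cites \cite[Proposition 1.3]{CP93}. One aside is wrong, though not load-bearing here: the $J=\varnothing$ instance of \eqref{2deswithmu} is \emph{not} a tautology --- its left side involves $(S)\mu_B$ for nonempty $S\subseteq\Bdy{\varnothing}$ and genuinely constrains $B$; it is only \emph{implied by} the conditions for nonempty $J$, and that implication requires the separate argument of Corollary~\ref{c:gwp-design} (summing $|(B\times B)\cap O_J|$ over all other orbits), which is exactly why the paper states Proposition~\ref{p:2des} with the $J=\varnothing$ condition and Theorem~\ref{t:gwp-design} without it.
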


\begin{proof}
By \cite[Theorem C]{BPRS}, the orbits of the group $F$ on $\PP\times \PP$ are precisely the sets $O_J$ defined in \eqref{O_J}, for each ancestral subset $J$ of $I$ (including the set $J=I$). Moreover, it follows from \eqref{O_J} that each of these sets $O_J$ is self-paired, and  (as we observed above) the set $O_I=\{(\bm{\delta}, \bm{\delta})\mid \bm{\delta}\in\PP\}$ is the diagonal orbit. Thus the $F$-orbits on unordered pairs of distinct points of $\PP$ are the sets $Q_J:=\{\{\bm{\delta},\bm{\varepsilon}\}\mid (\bm{\delta},\bm{\varepsilon})\in O_J\}$, for the proper ancestral subsets $J$. Note that $|O_J|=2 |Q_J|$ for each such $J$.

Therefore, by the criteria given in  \cite[Proposition 1.3]{CP93}, $\D = (\PP,\B)$ is a $2$-design  if and only if there is a constant $c$ such that, for each proper ancestral subset $J$, 
    \begin{equation*}
    p_J := \frac{|\{ \{\bm{\delta},\bm{\varepsilon}\} \ | \ \bm{\delta}, \bm{\varepsilon} \in B; \  \{\bm{\delta},\bm{\varepsilon}\} \in Q_J \}|}{|Q_J|} = \frac{|\{ (\bm{\delta},\bm{\varepsilon}) \ | \ \bm{\delta}, \bm{\varepsilon} \in B; \  (\bm{\delta},\bm{\varepsilon}) \in O_J \}|}{|O_J|}\ \ \mbox{is equal to $c$.}
    \end{equation*}
Assuming that there is such a constant, that is, $p_J=c$ for all $J\ne I$, it is readily checked, by summing  $p_J \cdot |O_J|$ over $J\ne I$, that
	\begin{align*}
	cv(v-1) &= c\sum_{J\in\A(\I)\setminus\{I\} }|O_J| = \sum_{J\in\A(\I)\setminus\{I\} } p_J \cdot |O_J|\\
    &= \sum_{J\in\A(\I)\setminus\{I\} }|\{ (\bm{\delta},\bm{\varepsilon}) \ | \ \bm{\delta}, \bm{\varepsilon} \in B; \ (\bm{\delta},\bm{\varepsilon}) \in O_J \}| = k(k-1).
	\end{align*}
Thus the constant must be $c = k(k-1)/v(v-1)$. Hence $\D = (\PP,\B)$ is a $2$-design  if and only if  
    \begin{equation}\label{eq:ancestralcondition}
    |(B\times B) \cap O_J| = \frac{ k(k-1)}{v(v-1)}|O_J| \ \ \text{ for each proper ancestral subset }J.
    \end{equation}

Our next task is to determine, for each proper ancestral subset  $J$, the cardinality
\[
|(B\times B) \cap O_J|
    = |\{ (\bm{\delta},\bm{\varepsilon}) \ | \ \bm{\delta},\bm{\varepsilon} \in B; \ (\bm{\delta},\bm{\varepsilon}) \in O_J \}|.
\]

Consider an arbitrary element $\bm{\delta}\in\PP$.
The number of  pairs in $(B\times B) \cap O_J$ with first entry $\bm{\delta}$ is 
$|\{ \bm{\varepsilon} \in B \ | \ (\bm{\delta},\bm{\varepsilon}) \in O_J \}|$ if $\bm{\delta}\in B$, and there are no such pairs if $\bm{\delta}\notin B$. 
To evaluate the size of this subset, assume that $\bm{\delta}\in B$. Then the elements $\bm{\varepsilon}$ contributing to pairs $(\bm{\delta},\bm{\varepsilon})$ in $(B\times B) \cap O_J$ are, by \eqref{O_J}, those elements such that (i) $\bm{\varepsilon}\pi_J=\bm{\delta}\pi_J$, and (ii) for each $i\in \Bdy{J}$ (so $J\cup\{i\}$ is ancestral),  $\bm{\varepsilon} \pi_{J \cup \{i\}} \ne \bm{\delta} \pi_{J \cup \{i\}}$. Recalling the definition  of the partition $\C_J$ in \eqref{partition}, condition (i) is equivalent to requiring $\bm{\varepsilon}, \bm{\delta}$ to lie in the same part $C_{\bm{\nu}}$  of  $\C_J$ as defined in \eqref{partition}, where  ${\bm{\nu}} = \bm{\delta}\pi_J\in \Delta_J$.  Moreover,  by the definition of the array function in 
\eqref{arrayfn},  $|B\cap C_{\bm{\nu}}| = ({\bm{\nu}})\chi_B$. Similarly, condition (ii) requires us to exclude the union of the sets $B\cap C_{\bm{\delta}\pi_{J\cup \{i\}}}$ for the parts $C_{\bm{\delta}\pi_{J\cup \{i\}}}$ of $\C_{J\cup\{i\}}$, with $i\in\Bdy{J}$, and by \eqref{arrayfn}, $|B\cap C_{\bm{\delta}\pi_{J\cup \{i\}}}| = (\bm{\delta}\pi_{J\cup \{i\}})\chi_B$.
Thus, noting that $J\cup S$ is an ancestral subset for each $S\subseteq \Bdy{J}$,
    \begin{align*}
    |(B\times B) \cap O_J|
    &= \sum_{\bm{\delta} \in B}  |\{ \bm{\varepsilon} \in B \ | \ \bm{\delta},\bm{\varepsilon}) \in O_J \}| \\
    &= \sum_{\bm{\delta} \in B}  \Big| \{ \bm{\varepsilon} \in B \ | \ \bm{\varepsilon} \pi_J = \bm{\delta} \pi_J \} \setminus \bigcup_{i \in \Bdy{J}} \{ \bm{\varepsilon} \in B \ | \ \bm{\varepsilon} \pi_{J \cup \{i\}} = \bm{\delta} \pi_{J \cup \{i\}} \} \Big| \\
    &= \sum_{\bm{\delta} \in B} \Big( \sum_{S \subseteq \Bdy{J}} (-1)^{|S|} (\bm{\delta}\pi_{J \cup S})\chi_B \Big)\\
    &= \sum_{S \subseteq \Bdy{J}} (-1)^{|S|} \Big( \sum_{\bm{\delta} \in B} (\bm{\delta}\pi_{J \cup S})\chi_B \Big)\\
    &=\sum_{S \subseteq \Bdy{J}} (-1)^{|S|}  \Big(\sum_{\bm{\delta} \in B} |B \cap C_{\bm{\delta}\pi_{J \cup S}}| \Big)
    \end{align*}
where, for the third equality, we use an analogous argument to that given above to prove \eqref{eq:O_J2}.

Consider $\bm{\nu} \in \Delta_{J \cup S}.$ If $\bm{\nu} \notin B\pi_{J \cup S}$, then  $(\bm{\nu})\chi_B =|B \cap C_{\bm{\nu}}|= 0$. If $\bm{\nu}=\bm{\delta}\pi_{J \cup S} \in  B\pi_{J \cup S}$, then  $(\bm{\nu})\chi_B=|B \cap C_{\bm{\nu}}|=|B \cap C_{\bm{\delta}\pi_{J \cup S}}|$, and there are $(\bm{\nu})\chi_B$ distinct $\bm{\delta'}\in B$ such that $\bm{\delta'}\pi_{J \cup S} = \bm{\nu}$. Thus
    \begin{equation}\label{arraysums}
   \sum_{\bm{\delta} \in B} |B \cap C_{\bm{\delta}\pi_{J \cup S}}|
    = \sum_{\bm{\nu} \in \Delta_{J \cup S}} \left((\bm{\nu})\chi_B\right)^2=\Sq[J\cup S].  
    \end{equation}
It follows that 
    \[|(B\times B) \cap O_J|=  \sum_{S \subseteq \Bdy{J}} (-1)^{|S|}\Sq[J\cup S].\]
Thus, by \eqref{eq:ancestralcondition}, $\D = (\PP,\B)$ is a $2$-design  if and only if  
    \[
    \sum_{S \subseteq \Bdy{J}} (-1)^{|S|}\Sq[J\cup S] = \frac{ k(k-1)}{v(v-1)}|O_J| \ \ \text{ for each proper ancestral subset }J.
    \]
Recalling  the expression for $|O_J|$ in Proposition~\ref{p:orbit}, and the fact that $|\Delta_i| = e_i$ for each $i$ and $v = |\PP| = \prod_{i\in I}|\Delta_i|$, this condition for $J$ becomes
	\begin{align*}
	 \sum_{S \subseteq \Bdy{J}} (-1)^{|S|}\Sq[J\cup S]
          &=  \frac{ k(k-1)}{(v-1)}{ \left( \prod_{i \in \Bdy{J}} (e_i - 1) \right) \left( \prod_{i \in (J \cup \Bdy{J})^\c} e_i \right)} \\
		\end{align*}
This completes the proof of Proposition~\ref{p:2des}.
\end{proof}

We now deduce Theorem \ref{t:gwp-design}.

\begin{corollary} \label{c:gwp-design}
Suppose that Hypothesis~\ref{h:2des} holds. Then $\D = (\PP,\B)$ is a $2$-design if and only  condition \eqref{2des} holds for each proper non-empty ancestral subset $J$ of $I$. Thus the assertion of Theorem \ref{t:gwp-design} is proved.
\end{corollary}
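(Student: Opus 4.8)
The plan is to derive Corollary~\ref{c:gwp-design}, and hence Theorem~\ref{t:gwp-design}, directly from Proposition~\ref{p:2des}. There are only two differences between the two statements: Proposition~\ref{p:2des} is phrased with the function $\mu_B$ of \eqref{eq:sum-squares}, whereas Theorem~\ref{t:gwp-design} uses the array function $\chi_B$; and Proposition~\ref{p:2des} asserts \eqref{2deswithmu} for \emph{all} proper ancestral subsets $J$ (including $J=\varnothing$), whereas Theorem~\ref{t:gwp-design} requires \eqref{2des} only for the proper \emph{non-empty} ones. The first difference is purely notational: by the definition \eqref{eq:sum-squares} of $\mu_B$ we have $\Sq[J'] = \sum_{\bm{\nu}\in\Delta_{J'}}\big((\bm{\nu})\chi_B\big)^2$ for every ancestral $J'$ (in particular for each $J\cup S$ with $J$ ancestral and $S\subseteq\Bdy J$), so for a fixed ancestral $J$ the equations \eqref{2deswithmu} and \eqref{2des} are identical. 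Hence the only real task is to show that, under Hypothesis~\ref{h:2des}, the $J=\varnothing$ instance of \eqref{2deswithmu} is a consequence of the instances for proper non-empty ancestral $J$.

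First I would re-use two identities from the proof of Proposition~\ref{p:2des} that hold with no hypothesis on $B$: the orbitals $O_J$ with $J\in\A(\I)\setminus\{I\}$ partition the set of ordered pairs of distinct points of $\PP$, giving $\sum_{J\in\A(\I)\setminus\{I\}}|O_J| = v(v-1)$; and their traces on $B\times B$ partition the set of ordered pairs of distinct points of $B$, giving $\sum_{J\in\A(\I)\setminus\{I\}}\big|(B\times B)\cap O_J\big| = k(k-1)$. I would also recall, from the same proof together with Proposition~\ref{p:orbit}, that for a proper ancestral subset $J$,
\[
\big|(B\times B)\cap O_J\big| = \sum_{S\subseteq\Bdy J}(-1)^{|S|}\Sq[J\cup S], \qquad |O_J| = v\Big(\prod_{i\in\Bdy J}(e_i-1)\Big)\Big(\prod_{i\in(J\cup\Bdy J)^\c}e_i\Big),
\]
so that condition \eqref{2deswithmu} for $J$ is exactly the assertion $\big|(B\times B)\cap O_J\big| = \frac{k(k-1)}{v(v-1)}\,|O_J|$.

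Assume now that \eqref{2des}, equivalently \eqref{2deswithmu}, holds for every proper non-empty ancestral subset $J$ of $I$; thus $\big|(B\times B)\cap O_J\big| = \frac{k(k-1)}{v(v-1)}\,|O_J|$ for all such $J$. Subtracting these equalities from the two unconditional identities above leaves
\[
\big|(B\times B)\cap O_\varnothing\big| = k(k-1) - \frac{k(k-1)}{v(v-1)}\Big(v(v-1) - |O_\varnothing|\Big) = \frac{k(k-1)}{v(v-1)}\,|O_\varnothing|,
\]
which is precisely \eqref{2deswithmu} for $J=\varnothing$. Hence \eqref{2deswithmu} holds for every proper ancestral subset of $I$, and Proposition~\ref{p:2des} shows that $\D$ is a $2$-design. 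For the converse, if $\D$ is a $2$-design then Proposition~\ref{p:2des} gives \eqref{2deswithmu} for all proper ancestral subsets, in particular the non-empty ones, which by the notational identity above is \eqref{2des}. This establishes Corollary~\ref{c:gwp-design}, and with it Theorem~\ref{t:gwp-design}.

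I do not expect a genuine obstacle here; the argument is a short bookkeeping deduction built on Proposition~\ref{p:2des}. The one point needing a little care is confirming that the $J=\varnothing$ case of Proposition~\ref{p:2des} carries no extra information — which is exactly what the inclusion--exclusion counting of the orbitals $O_J$ supplies — together with the routine check that the $\mu_B$- and $\chi_B$-versions of the condition agree term by term.
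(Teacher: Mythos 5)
Your proposal is correct and follows essentially the same route as the paper: both reduce the corollary to Proposition~\ref{p:2des}, note that the $\mu_B$/$\chi_B$ translation is purely notational via \eqref{eq:sum-squares}, and recover the $J=\varnothing$ instance from the non-empty ones by using that the orbitals $O_J$ ($J\in\A(\I)\setminus\{I\}$) partition the ordered pairs of distinct points, so that $\sum_J|(B\times B)\cap O_J|=k(k-1)$ and $\sum_J|O_J|=v(v-1)$ force $|(B\times B)\cap O_\varnothing|=\frac{k(k-1)}{v(v-1)}|O_\varnothing|$. No gaps.
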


\begin{proof}
By Proposition~\ref{p:2des}, if $\D = (\PP,\B)$ is a $2$-design then \eqref{2deswithmu} holds for each proper ancestral subset $J$, or equivalently, \eqref{2des}  holds for each proper ancestral subset $J$ (using \eqref{eq:sum-squares}). 

Conversely suppose that  \eqref{2des} holds for each proper non-empty ancestral subset $J$. 
In the proof of Proposition~\ref{p:2des}, it was shown  that  $\D = (\PP,\B)$ is a $2$-design if and only if the equation in  \eqref{eq:ancestralcondition} holds for each proper ancestral subset $J$ of $I$, and by \eqref{eq:sum-squares} this equation for such a subset $J$ is equivalent to  condition \eqref{2des} for $J$.
Thus by our assumption, the equation in  \eqref{eq:ancestralcondition} holds for  all proper non-empty ancestral subsets $J$, and it remains to prove that it holds for the subset $J=\varnothing$. 

By \cite[Theorem C]{BPRS}, the union of the $O_J$ over all $J \in\A(\I)\setminus\{I\}$ (the proper ancestral subsets) is equal to the set of all ordered pairs of distinct points from $\PP$, a set of size $v(v-1)$. Hence the sum of the quantity $|(B\times B)\cap O_J|$ in \eqref{eq:ancestralcondition} over all subsets $J \in\A(\I)\setminus\{I\}$ is equal to  $k(k-1)$. Thus the first equality below holds, and using the equation in \eqref{eq:ancestralcondition} for all $J \in\A(\I)\setminus\{I,\varnothing\}$, we obtain 
 \begin{align*}
    |(B\times B) \cap O_{\varnothing}|
    &= k(k-1) - \sum_{J \in \A(\I), J\ne I, \varnothing} |(B\times B) \cap O_J| \\
    &= k(k-1) - \frac{k(k-1)}{v(v-1)} \sum_{J \in \A(\I), J\ne I, \varnothing} |O_J| \\
    &= k(k-1) - \frac{k(k-1)}{v(v-1)} \left( v(v-1) - |O_{\varnothing}| \right)\\
    &= \frac{k(k-1)}{v(v-1)} |O_{\varnothing}|.
    \end{align*}
Therefore the equation in \eqref{eq:ancestralcondition} holds also for $J=\varnothing$, and hence the equation in \eqref{eq:ancestralcondition} holds  for all $J \in\A(\I)\setminus\{I\}$, so  $\D = (\PP,\B)$ is a $2$-design.
Thus the proof is complete.
\end{proof}

\subsection{Application to chains and antichains}

Here we show that our main result Theorem~\ref{t:gwp-design} specialises to earlier results in the literature for the special cases where $\I$ is a chain (that is, $\I$ is a totally ordered set) and an anti-chain (that is, the relation $\prec$ is empty). See
\cite[Theorem 1.3]{chainspaper}  and \cite[Theorem 1.3]{multigrids} for these special cases.

\begin{lemma} \label{l:2des-antich}
If $(I,\preccurlyeq)$ is an $s$-antichain for some $s \geq 2$, then Theorem \ref{t:gwp-design} is equivalent to \cite[Theorem 1.3]{multigrids}.
\end{lemma}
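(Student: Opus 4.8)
The plan is to show that when $(I,\preccurlyeq)$ is an $s$-antichain, the general condition \eqref{2des} in Theorem~\ref{t:gwp-design} reduces exactly to the system of equations appearing in \cite[Theorem 1.3]{multigrids}, so that the two statements have the same hypotheses and the same conclusion. First I would unpack the combinatorics of ancestral subsets for an antichain: since $\prec$ is empty, every subset $J\subseteq I$ is ancestral, so $\A(\I)$ is the full power set of $I$, and for any $J$ the complement $J^\c$ is itself an antichain, whence every element of $J^\c$ is maximal. Therefore $\Bdy{J}=J^\c$ for every ancestral (i.e.\ every) $J$, and consequently $(J\cup\Bdy{J})^\c=\varnothing$, so the product $\prod_{j\in(J\cup\Bdy J)^\c}e_j$ collapses to the empty product~$1$. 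This already simplifies the right-hand side of \eqref{2des} to $\frac{k(k-1)}{v-1}\prod_{i\in J^\c}(e_i-1)$.

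Next I would simplify the left-hand side. With $\Bdy{J}=J^\c$, the inner sum ranges over all $S\subseteq J^\c$, and $J\cup S$ ranges over all subsets of $I$ containing $J$. Writing $T=J\cup S$ (so $S=T\setminus J$ and $|S|=|T|-|J|$), the left side becomes $(-1)^{|J|}\sum_{T\supseteq J}(-1)^{|T|}\,\mu_B(T)$, i.e.\ up to the global sign $(-1)^{|J|}$ it is the standard Möbius-type alternating sum over the up-set of $J$ in the Boolean lattice. I would then match this, index by index, with the parametrisation used in \cite{multigrids}: there the point set is the $s$-dimensional grid $\Delta_1\times\cdots\times\Delta_s$, the relevant partitions $\C_J$ are exactly the ``coordinate'' partitions indexed by subsets $J\subseteq I$, and the array/parameters of \cite{multigrids} are precisely the values $\mu_B(J)=\sum_{\bm\nu\in\Delta_J}((\bm\nu)\chi_B)^2$. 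One checks that the $2$-design conditions stated there (one for each nonempty proper $J$) are term-for-term the equations obtained above after clearing the sign $(-1)^{|J|}$ and substituting $\Bdy{J}=J^\c$. Since Theorem~\ref{t:gwp-design} and \cite[Theorem 1.3]{multigrids} have identical standing hypotheses in this case (finite antichain, $e_i\ge2$, each $G_i$ $2$-transitive, $F\cong G_1\times\cdots\times G_s$ by Theorem~\ref{thm:dir} applied iteratively), establishing that the two systems of equations coincide proves the equivalence.

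The main obstacle I anticipate is purely bookkeeping rather than conceptual: one must line up the indexing conventions of \cite{multigrids}—which may phrase its conditions in terms of a chosen coordinate, or in terms of ``partitions'' rather than subsets $J$, and may absorb the alternating signs differently—with the clean $\A(\I)=2^I$, $\Bdy J=J^\c$ picture here, and verify that no condition is lost or duplicated (in particular that the $J=\varnothing$ condition, which Theorem~\ref{t:gwp-design} omits but Proposition~\ref{p:2des} includes, corresponds to the normalisation $\mu_B(\varnothing)=k^2$ recorded in Lemma~\ref{lem:array-sqs}, and is automatically implied, exactly as in Corollary~\ref{c:gwp-design}). I would also double-check that \cite{multigrids} expresses its right-hand sides using the same normalising constant $k(k-1)/(v-1)$; if it instead uses $\lambda$ or $r$, a short computation using $\lambda(v-1)=r(k-1)$ and $r v=bk$ reconciles the two. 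Once these dictionary issues are settled, the equivalence is immediate.
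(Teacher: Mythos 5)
Your reduction of condition \eqref{2des} for an antichain is correct and agrees with the paper: since $\prec$ is empty, every subset of $I$ is ancestral, $\Bdy{J}=J^\c$, and $(J\cup\Bdy{J})^\c=\varnothing$, so \eqref{2des} specialises to
\begin{equation*}
\sum_{S\subseteq J^\c}(-1)^{|S|}\sum_{\bm{\nu}\in\Delta_{J\cup S}}\big((\bm{\nu})\chi_B\big)^2
=\frac{k(k-1)}{v-1}\prod_{i\in J^\c}(e_i-1),
\end{equation*}
and you correctly note that the $J=\varnothing$ instance is implied, via Proposition~\ref{p:2des} and Corollary~\ref{c:gwp-design}. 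The gap is in the matching step. You claim the conditions of \cite[Theorem 1.3]{multigrids} are ``term-for-term the equations obtained above after clearing the sign.'' They are not: condition (6) of \cite{multigrids} is, for each nonempty proper $J$, the single non-alternating equation
\begin{equation*}
\sum_{\bm{\nu}\in\Delta_J}\big((\bm{\nu})\chi_B\big)^2
=k+\frac{k(k-1)}{v-1}\Big(\prod_{i\in J^\c}e_i-1\Big),
\end{equation*}
so for $|J^\c|\geq 2$ no individual equation of one system coincides with an individual equation of the other. The entire content of the lemma is that the two \emph{systems} are equivalent, and that is a genuine inclusion--exclusion/M\"obius-inversion step over the Boolean lattice, not indexing bookkeeping: substituting the single-sum formulas into the alternating sum and using $\sum_{S\subseteq J^\c}(-1)^{|S|}=0$ gives one direction, while the converse requires a downward induction on $|J^\c|$ anchored at $(I)\mu_B=k$ from Lemma~\ref{lem:array-sqs}. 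Your plan flags ``absorbing the alternating signs differently'' as a possible issue but never carries out this argument.

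The paper sidesteps the inversion by observing that the specialised alternating-sum system is precisely the intermediate condition (21) appearing \emph{inside the proof} of \cite[Theorem 1.3]{multigrids}, and then citing that paper's own demonstration that (21) is equivalent to (6). So either you perform the inversion yourself, or you cite the intermediate condition as the paper does; one of the two must actually be done before the equivalence is established. The rest of your outline (the identification $\A(\I)=2^I$, the collapse of the right-hand side, the treatment of $J=\varnothing$, and the remark that the standing hypotheses agree with $F\cong G_1\times\cdots\times G_s$) is sound.
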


\begin{proof}
It is enough to show that the set of conditions \eqref{2des}, for all proper nonempty ancestral subsets $J$ of $I$, is equivalent to the set of conditions (6) in \cite[Theorem 1.3]{multigrids}, for all proper nonempty ancestral subsets $J$ of $I$. In \cite{multigrids} the set $\Delta_{J \cup S}$ is denoted as $\mathscr{E}_{J \cup S}$, so using our current notation, \cite[Theorem 1.3, conditions (6)]{multigrids} can be written as 
    \begin{equation}
    \sum_{\bm{\nu} \in \Delta_J} \big((\bm{\nu})\chi_B\big)^2 = k + \frac{k(k-1)}{v-1} \left( \bigg( \prod_{i \in J^\c} e_i \bigg) - 1 \right) \quad \text{for all $J \subsetneq I$ with $J \ne \varnothing$.}
    \end{equation}    
Since $(I,\preccurlyeq)$ is an antichain, any subset $J$ of $I$ is ancestral, with border $\Bdy{J} = J^\c$. Thus, for any subset $J$ of $I$, we have $(J \cup \Bdy{J})^\c = (J \cup J^\c)^\c = I^\c = \varnothing$ and we can write \eqref{2des} as
    \begin{equation} \label{2des-antich}
    \sum_{S \subseteq J^\c} (-1)^{|S|} \left(\sum_{\bm{\nu} \in \Delta_{J \cup S}} (\bm{\nu})\chi_B\right)^2 = \frac{k(k-1)}{v-1} \left( \prod_{i \in J^\c} (e_i - 1) \right).
    \end{equation}
It follows from Theorem \ref{t:gwp-design} and Proposition \ref{p:2des} that the set of conditions \eqref{2des-antich}, for all proper nonempty subsets $J$, is equivalent to the set of conditions \eqref{2des-antich} for all proper subsets of $J$, including $\varnothing$.
So the set of conditions \eqref{2des-antich}, for all proper subsets $J$, is equivalent to condition (21) in \cite{multigrids}. It is shown in the proof of \cite[Theorem 1.3]{multigrids} that \cite[condition (21)]{multigrids} is equivalent to \cite[condition (6)]{multigrids}. This proves that, if $(I,\preccurlyeq)$ is an antichain, Theorem \ref{t:gwp-design} is equivalent to \cite[Theorem 1.3]{multigrids}.
\end{proof}

\begin{lemma} \label{l:2des-ch}
If $(I,\preccurlyeq)$ is an $s$-chain, where $s \geq 2$, then Theorem \ref{t:gwp-design} is equivalent to \cite[Theorem 1.3]{chainspaper}.
\end{lemma}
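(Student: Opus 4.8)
```latex
The plan is to follow exactly the template of Lemma~\ref{l:2des-antich}: show that, when $\I$ is an $s$-chain, the system of conditions \eqref{2des} ranging over all proper nonempty ancestral subsets $J$ is precisely the system of conditions appearing in \cite[Theorem 1.3]{chainspaper}. The key structural simplification is that the ancestral subsets of an $s$-chain are easy to enumerate. If we label the chain as $1 \prec 2 \prec \cdots \prec s$, then a subset $J \subseteq I$ is ancestral if and only if it is of the form $J_m := \{m, m+1, \ldots, s\}$ for some $m$ with $1 \le m \le s+1$ (with $J_{s+1} = \varnothing$ and $J_1 = I$). For such $J = J_m$ with $2 \le m \le s$ (the proper nonempty case), the complement $J^\c = \{1, \ldots, m-1\}$ has a unique maximal element, namely $m-1$, so $\Bdy{J} = \{m-1\}$. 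Hence the subsets $S \subseteq \Bdy{J}$ are just $\varnothing$ and $\{m-1\}$, and $(J \cup \Bdy{J})^\c = \{1,\ldots,m-2\}$.

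With this in hand, condition \eqref{2des} for $J = J_m$ collapses to a two-term alternating sum:
\begin{equation*}
\Sq[J_m] - \Sq[J_{m-1}] = \frac{k(k-1)}{v-1}(e_{m-1}-1)\prod_{i=1}^{m-2} e_i.
\end{equation*}
I would then translate the array function $\chi_B$ and the sets $\Delta_{J_m}$ into whatever notation is used in \cite{chainspaper} (there the relevant partitions in a chain are typically indexed by the partition depth, and the quantities $\Sq[J_m]$ correspond to the sums of squares of the block's intersection numbers with the parts of the $m$-th partition), and verify term-by-term that the displayed identity coincides with the condition stated in \cite[Theorem 1.3]{chainspaper}. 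As in the antichain case, one must also account for the fact that \eqref{2des} omits $J = \varnothing$ whereas the proof of Proposition~\ref{p:2des} shows the $J = \varnothing$ condition is automatically implied by the others; if \cite[Theorem 1.3]{chainspaper} phrases its conditions including an analogue of the top level, this is handled by the same redundancy argument used in Lemma~\ref{l:2des-antich} via Proposition~\ref{p:2des} and Corollary~\ref{c:gwp-design}.

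The main obstacle is purely bookkeeping: matching the indexing conventions of \cite{chainspaper} (order of the chain, whether partitions are indexed by parts-from-the-top or from-the-bottom, and how the $e_i$ and cumulative products $c_J$, $d_J$ are named there) against the conventions of the present paper, and checking that the products $\prod_{i \in \Bdy{J}}(e_i-1)$ and $\prod_{j \in (J \cup \Bdy{J})^\c} e_j$ on the right-hand side of \eqref{2des} reproduce exactly the combinatorial coefficient in \cite[Theorem 1.3]{chainspaper}. There is no genuine mathematical difficulty here beyond what is already contained in Theorem~\ref{t:gwp-design} and Proposition~\ref{p:2des}; the content of the lemma is the observation that a chain has, at each proper nonempty ancestral level, a singleton border, which is what reduces the inclusion--exclusion sum over $S \subseteq \Bdy{J}$ to the simple two-term difference matching the chain paper's formulation.
```
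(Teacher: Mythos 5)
Your outline takes the same route as the paper's proof: you enumerate the ancestral subsets of the $s$-chain as the up-sets $A[m]=\{m,\dots,s\}$ together with $\varnothing$ and $I$, observe that every proper ancestral subset has a singleton border, so that the inclusion--exclusion sum in \eqref{2des} collapses to the two-term difference $\Sq[J]-\Sq[J^*]$ with $J^*=J\cup\Bdy{J}$, you compute the right-hand side correctly, and you invoke the redundancy of the $J=\varnothing$ condition via Proposition~\ref{p:2des} to reconcile the fact that \cite[Theorem 1.3]{chainspaper} has one condition per level of the chain while \eqref{2des} ranges only over proper \emph{nonempty} ancestral subsets. All of this agrees with what the paper does.

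What you defer as ``purely bookkeeping'' is, however, where the only real work of the lemma sits, and it is not a literal term-by-term match of formulas. The conditions of \cite[Theorem 1.3]{chainspaper} are not phrased as differences of sums of squares: they have the form $\sum_{C}x_C(x_{C^+}-x_C)=\cdots$, together with $\sum_C x_C(x_C-1)=\cdots$ at the finest level, where $C$ runs over the classes of one partition in the chain and $C^+$ denotes the class of the next coarser partition containing $C$. To identify your two-term difference with these you need the double-counting identity
\[
\Sq[J]-\Sq[J^*]\;=\;\sum_{\bm{\nu}\in\Delta_{J^*}}(\bm{\nu})\chi_B\Bigl((\bm{\nu}\pi_{J})\chi_B-(\bm{\nu})\chi_B\Bigr),
\]
obtained by writing the left side as $\sum_{\bm{\delta}\in B}\bigl((\bm{\delta}\pi_J)\chi_B-(\bm{\delta}\pi_{J^*})\chi_B\bigr)$ and grouping the summands according to the $\C_{J^*}$-class of $\bm{\delta}$, each distinct term occurring exactly $(\bm{\delta}\pi_{J^*})\chi_B$ times. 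One also needs the resulting index shift: the condition at $J=A[i]$ pairs with the chain paper's condition one level down, since the outer sum runs over $\Delta_{J^*}=\Delta_{A[i-1]}$, while $J=\varnothing$ pairs with the top level $J^*=A[s]$; and the case $J=A[2]$, $J^*=I$ is special, because there $(\bm{\nu})\chi_B\in\{0,1\}$ for $\bm{\nu}\in\Delta_I$, which is what produces the $x_C(x_C-1)$ form of condition (2) of \cite[Theorem 1.3]{chainspaper}. None of this is difficult, but it is the entire content of the equivalence, so it must be carried out rather than asserted.
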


\begin{proof}
As in Lemma \ref{l:2des-antich}, we will show that the set of conditions \eqref{2des}, for all proper nonempty ancestral subsets $J$ of $I$, is equivalent to the set of conditions (2) and (3) in \cite[Theorem 1.3]{chainspaper}.

Suppose that $I = \{1, \ldots, s\}$ with $1 \prec 2 \prec \ldots \prec s$. Then the proper ancestral subsets $J$ of $I$ are $\varnothing$ and the sets $A[i]$ for $2 \leq i \leq s$.  We first recall the conditions (2) and (3) in \cite[Theorem 1.3]{chainspaper}. For any $i \in \{1, \ldots, s-1\}$, the partition $\C_i$ in \cite{chainspaper} is, in our current notation, the partition $\C_{A[i+1]}$. Thus any class $C \in \C_i$ corresponds to a class $C_{\bm{\nu}}$ for some $\bm{\nu} \in \Delta_{A[i+1]}$, and the parameter $x_C$ in \cite{chainspaper} is equal to $(\bm{\nu})\chi_B$. As the class $C$ varies over the partition $\C_i$, the tuple $\bm{\nu}$ varies over $\Delta_{A[i+1]}$. Thus, when $i = 1$, condition (2) of \cite[Theorem 1.3]{chainspaper} can be written in our current notation as
    \begin{equation} \label{2des-chains-1}
    \sum_{\bm{\nu} \in \Delta_{A[2]}} (\bm{\nu})\chi_B \left( (\bm{\nu})\chi_B - 1 \right)
    = \frac{k(k-1)}{v-1} (e_1 - 1)
    \end{equation}
The class $C^+$ in \cite{chainspaper} is the unique $\C_{i-1}$-class that contains $C$, so $C^+ = C_{\bm{\nu}\pi_{A[i]}}$, the unique $\C_{A[i]}$-class that contains $C$. Hence conditions (3) of \cite[Theorem 1.3]{chainspaper} can be written as
    \begin{equation} \label{2des-chains-i>1}
    \sum_{\bm{\nu} \in \Delta_{A[i]}} (\bm{\nu})\chi_B \left( (\bm{\nu}\pi_J)\chi_B - (\bm{\nu})\chi_B \right) = \frac{k(k-1)}{v-1} (e_i - 1) \prod_{j \leq i-1} e_j \quad \text{for $i \in \{2, \ldots, s\}$}.
    \end{equation}

We now prove that the set of conditions \eqref{2des}, for all proper nonempty ancestral subsets $J$, is equivalent to conditions \eqref{2des-chains-1} and \eqref{2des-chains-i>1}. Note that for each proper ancestral subset $J$ the border $\Bdy{J}$ is a singleton set, and in particular,
    \begin{equation} \label{border-ch}
    \Bdy{J} = \begin{cases} \{i-1\} &\text{if $J = A[i]$, $2 \leq i \leq s$} \\
    \{s\} &\text{if $J = \varnothing$} \end{cases}
    \end{equation}
For convenience let
    \[
    J^* := J \cup \Bdy{J}.
    \]

It follows from Theorems \ref{t:gwp-design} and Proposition \ref{p:2des} that the set of conditions \eqref{2des}, for all proper nonempty ancestral subsets $J$ of $I$, is equivalent to the set of conditions \eqref{2des}, for all proper ancestral subsets $J$ of $I$ (that is, including $J = \varnothing$). So the left side of \eqref{2des}, which is equivalent to \eqref{2deswithmu}, is $\Sq[J] - \Sq[J^*]$. Using \eqref{arraysums} we can write this as
    \begin{equation} \label{arraysums2}
    \Sq[J] - \Sq[J^*]
    = \sum_{\bm{\delta} \in B}  \left( (\bm{\delta}\pi_J)\chi_B - (\bm{\delta}\pi_{J^*})\chi_B \right).
    \end{equation}

If $J = A[2]$ then $J^* = A[1] = I$, and \eqref{arraysums2} can be written as
    \[
    \Sq[{A[2]}] - \Sq[I]
    = \sum_{\bm{\delta} \in B}  \left( (\bm{\delta}\pi_{A[2]})\chi_B - (\bm{\delta})\chi_B \right)
    = \sum_{\bm{\delta} \in B} \left( (\bm{\delta}\pi_{A[2]})\chi_B - 1 \right).
    \]
 The number of times the term $(\bm{\delta}\pi_{A[2]})\chi_B - 1$ appears in the sum on the right is equal to the number of points $\bm{\delta'} \in B$ satisfying $\bm{\delta'}\pi_{A[2]} = \bm{\delta}\pi_{A[2]}$. That is, the number of times it appears is equal to $|B \cap C_{(\bm{\delta}\pi_{A[2]})}| = (\bm{\delta}\pi_{A[2]})\chi_B$. Since $\bm{\delta}\pi_{A[2]}$ varies over $\Delta_{A[2]}$ as $\bm{\delta}$ varies over $\PP$,
    \[
    \sum_{\bm{\delta} \in B}  \left( (\bm{\delta}\pi_{A[2]})\chi_B - 1 \right)
    = \sum_{\bm{\nu} \in \Delta_{A[2]}} (\bm{\nu})\chi_B \left( (\bm{\nu})\chi_B - 1 \right).
    \]
So condition \eqref{2des} for $J = A[2]$ is equivalent to condition \eqref{2des-chains-1}.

Suppose now that $J = A[i]$, $3 \leq i \leq s$, or $J = \varnothing$. Note that the number of times the term $(\bm{\delta}\pi_J)\chi_B - (\bm{\delta}\pi_{J^*})\chi_B$ appears in the sum on the right of \eqref{arraysums2} is equal to the number of points $\bm{\delta'} \in B$ satisfying $\bm{\delta'}\pi_{J^*} = \bm{\delta}\pi_{J^*}$. That is, the number of times it appears is equal to the number of points in $B \cap C_{\bm{\delta}\pi_{J^*}}$, which is equal to $(\bm{\delta}\pi_{J^*})\chi_B$. Note also that, since $J \subseteq J^*$, we have $\pi_J = \pi_{J^*}\pi_J$ by \eqref{eq:proj}. As $\bm{\delta}$ varies over $\PP$, the projection $\bm{\delta}\pi_{J^*}$ varies over $\Delta_{J^*}$, so that \eqref{arraysums2} can be written as
    \[
       \Sq[J] - \Sq[J^*]
    = \sum_{\bm{\nu} \in \Delta_{J^*}} (\bm{\nu})\chi_B \left( (\bm{\nu}\pi_J)\chi_B - (\bm{\nu})\chi_B \right).
    \]
If $J = A[i]$, $3 \leq i \leq s$, then $J^* = A[i] \cup \{i-1\} = A[i-1]$, and if $J = \varnothing$ then $J^* = \{s\} = A[s]$. Hence
    \begin{align*}
    \Sq[J] - \Sq[J^*]
    &=  \begin{cases}
        \sum_{\bm{\nu} \in \Delta_{A[i-1]}} (\bm{\nu})\chi_B \left( (\bm{\nu}\pi_J)\chi_B - (\bm{\nu})\chi_B \right) &\text{if $J = A[i]$, $3 \leq i \leq s$} \\
        \sum_{\bm{\nu} \in \Delta_{A[s]}} (\bm{\nu})\chi_B \left( (\bm{\nu}\pi_J)\chi_B - (\bm{\nu})\chi_B \right) &\text{if $J = \varnothing$}
        \end{cases} \\
    &=  \begin{cases}
        \sum_{\bm{\nu} \in \Delta_{A[i]}} (\bm{\nu})\chi_B \left( (\bm{\nu}\pi_J)\chi_B - (\bm{\nu})\chi_B \right) &\text{if $J = A[i]$, $2 \leq i \leq s-1$} \\
        \sum_{\bm{\nu} \in \Delta_{A[s]}} (\bm{\nu})\chi_B \left( (\bm{\nu}\pi_J)\chi_B - (\bm{\nu})\chi_B \right) &\text{if $J = \varnothing$}.
        \end{cases}
    \end{align*}
Substituting each of these expressions into the left side of \eqref{2des}, we see that condition \eqref{2des} for $J = A[i]$, $3 \leq i \leq s$, is equivalent to condition \eqref{2des-chains-i>1} for $i \in \{2, \ldots, s-1\}$, and condition \eqref{2des} for $J = \varnothing$ is equivalent to condition \eqref{2des-chains-i>1} for $i = s$.

This shows that the set of conditions \eqref{2des}, for all proper ancestral subsets $J$, is equivalent to the set of conditions (2) and (3) in \cite[Theorem 1.3]{chainspaper}.
\end{proof}

\section{Families of examples  for posets of small order} \label{sec:ex}

We illustrate the concepts in Section \ref{s:prelims} for some small posets $(I,\preccurlyeq)$. Examples \ref{poset:grid} and \ref{poset:2ch} correspond to \cite[structures (2) and (3)]{N} and to \cite[Figure 1, diagrams (3) and (4)]{Thr}. Figure \ref{fig:posets-4} is listed in \cite[Figure 3, diagram (7)]{Thr} and is described in \cite[Example 1 and Figure 3]{BPRS}.

The partitions $\CC^*$ corresponding to small posets $(I,\preccurlyeq)$ with $|I| \leq  3$ are shown in Figures \ref{fig:posets-2}, \ref{fig:posets-3},   and one example with $|I|=4$  is shown in Figure \ref{fig:posets-4}.  We reproduce these below with more details for those examples that are not chains or antichains. We label the partitions $\C_{A[i]}$ for each $i \in I$; all other invariant partitions can be obtained by taking intersections of classes from two or more partitions, as discussed in Section~\ref{s:posetbkstr}. For example, in Figure \ref{fig:3grid}, each class in the partition $\C_{\{1,2\}}$ is the intersection of a $\C_{\{1\}}$-class and a $\C_{\{2\}}$-class; since each $\C_{\{1\}}$- and $\C_{\{2\}}$-class is a vertical plane, each $\C_{\{1,2\}}$-class is a vertical line. The groups $\prod_{(I,\preccurlyeq)}(G_i,\Delta_i)$ for Figures \ref{fig:posets-2} and \ref{fig:posets-3} are obtained by applying Theorems \ref{thm:dir} and \ref{thm:wr}, as appropriate. 

Examples of $2$-designs whose point set admits a poset of partitions as shown in Figures \ref{fig:posets-2}, \ref{fig:3ch}, and \ref{fig:3grid} can be found in \cite{chainspaper} (for Figures \ref{fig:2ch} and \ref{fig:3ch}) and \cite{multigrids} (for Figures \ref{fig:2grid} and \ref{fig:3grid}), as described in Section \ref{sec:chains}. In this section we thus give examples for the remaining posets in Figure \ref{fig:posets-3} (that is, all such posets that are not chains nor antichains), and for the poset in Figure \ref{fig:posets-4}.

\subsection{Posets $(I,\preccurlyeq)$ with $|I| = 3$} \label{ss:I=3}

The posets considered here are those in Figure~\ref{fig:posets-3}(c--e).
In each of the constructions below $p \geq 2$ is an integer and
    \begin{equation} \label{eq:e-3}
    e_1 = p^2 + p + 1, \ e_2 = p^2 - p + 1, \ e_3 = p^4 - p^2 + 1.
    \end{equation}
   Now $|\Delta_i|=e_i$ and it is convenient to take $\Delta_i=\{0,1,2,\ldots, e_i-1\}\subset\mathbb{Z}$. Note that $e_2<e_1<e_3$ and, as integer subsets we have $\Delta_2\subset \Delta_1\subset \Delta_3$. 
Thus $\PP = \Delta_1 \times \Delta_2 \times \Delta_3$, so that
    \[ v = |\PP| = e_1e_2e_3 = q^2 + q + 1, \quad \text{where} \ q = p^4. \]
Furthermore, each block will have size $k = q + 1$, so that 
    \[ \frac{k(k-1)}{v-1} = \frac{(q+1)q}{q^2 + q} = 1 \]
and condition \eqref{2deswithmu},  for a proper non-empty ancestral subset $J$, becomes
    \begin{equation} \label{2des-examples}
    \sum_{S \subseteq \Bdy{J}} (-1)^{|S|} \Sq[J\cup S]
    = \left( \prod_{i \in \Bdy{J}} (e_i - 1) \right) \left( \prod_{j \in (J \cup \Bdy{J})^\c} e_j \right)= \frac{|O_J|}{v}
     \end{equation}
where the last equality follows from \eqref{|O_J|}. In each construction the block $B$ will be described as a disjoint union of subsets $B_1$, $B_2$, and $B_3$. Note that in this notation the subsets $B_1$, $B_2$, and $B_3$ have no association with the nodes $1$, $2$, and $3$ of the poset $(I,\preccurlyeq)$.

\subsubsection{Example for Figure \ref{fig:ch-grid}}

Suppose that the point set $\PP$ admits a poset of partitions that corresponds to the disconnected poset $(I,\preccurlyeq)$ with nodes $1$, $2$, and $3$, as shown in Figure \ref{fig:ch-grid}, which we show again below. 

\begin{center}
    \includegraphics{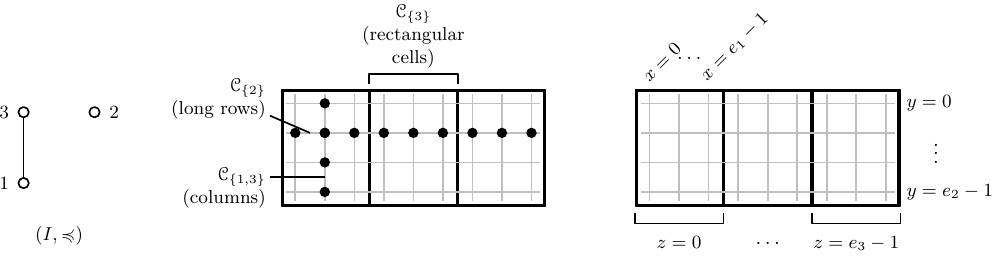}
\end{center}

Let $G = (G_1 \wr G_3) \times G_2$, where $G_i$ is a $2$-transitive subgroup of $S_{e_i}$, for each $i$.  It follows from Theorems~\ref{thm:dir} and~\ref{thm:wr} that $G$ is a generalised wreath product group preserving this poset of partitions. In particular, these results imply that  the proper nonempty ancestral subsets of $I$ are $\{2\}$, $\{3\}$, $\{1,3\}$, and $\{2,3\}$, so the nontrivial point partitions of $\PP$ preserved by $G$ are:
    \begin{itemize}
    \item $\C_{\{2\}}$ with classes $C_y=\{(x,y,z) \mid x\in\Delta_1, \ z\in\Delta_3\}$, for $y \in \Delta_2$; the $\C_{\{2\}}$-classes correspond to the long rows in Figure~\ref{fig:ch-grid};
    \item $\C_{\{3\}}$ with classes $C_z=\{ (x,y,z) \mid x\in\Delta_1, \ y\in\Delta_3\}$, for $z\in \Delta_3$; the $\C_{\{3\}}$-classes correspond to the rectangular cells in Figure~\ref{fig:ch-grid};
    \item $\C_{\{1,3\}}$ with classes $C_{(x,z)}=\{(x,y,z) \mid y\in\Delta_2\}$ for $(x,z)\in \Delta_{\{1,3\}}=\Delta_1\times \Delta_3$; the $\C_{\{1,3\}}$-class $C_{(x,z)}$ corresponds to the small $x$-column in the rectangular cell $C_z\in\C_{\{3\}}$ in Figure~\ref{fig:ch-grid};
    \item $\C_{\{2,3\}}$ with classes $C_{(y,z)}=\{(x,y,z) \mid x\in\Delta_1\}$ for $(y,z)\in \Delta_{\{2,3\}}=\Delta_2\times \Delta_3$; the $\C_{\{2,3\}}$-class $C_{(y,z)}$ corresponds to the part of the long row $C_y$ in the rectangular cell $C_z$ in Figure~\ref{fig:ch-grid}.
    \end{itemize}

Thus for a point $\bm{\delta}=(x,y,z)\in\PP$, the third coordinate represents the rectangular cell $C_{z}\in\C_{\{3\}}$ containing $\bm{\delta}$, the second coordinate represents the long row $C_{y}\in\C_{\{2\}}$ containing $\bm{\delta}$, and the first coordinate represents the small column $C_{(x,z)}$ inside $C_{z}$ containing $\bm{\delta}$.

 Note that the special ancestral subsets $A[j]$ defined in Subsection~\ref{s:posets} are as follows: $A[1]=\{1,3\},A[2]=\{2\},A[3]=\{3\}$, so the sub-poset  $ \CC^* = \{ \C_{A[i]} \ | \ i \in I \}$ in \eqref{C*} contains partitions for three of the four proper non-empty ancestral sets. The remaining ancestral set $\{2,3\}$ has the property that 
every $\C_{\{2,3\}}$-class is the intersection of a $\C_{\{2\}}$-class and a $\C_{\{3\}}$-class, as explained in Section \ref{s:posetbkstr}.

\begin{example} \label{ex:ch-grid}
Take $B ={\color{amber} B_1} \cup {\color{red} B_2} \cup {\color{blue} B_3}$ where ${\color{red}B_2} = {\color{red}B_{2,1}} \cup {\color{red}B_{2,2}}$ and, recalling that $\Delta_2\subset \Delta_1\subset \Delta_3\subset \mathbb{Z}$, and that $e_1=p^2+p+1, e_2=p^2-p+1, e_3=p^4-p^2+1$ with $q=p^4$:
    \begin{align*}
    {\color{amber} B_1} &:= \{ (x,0,0) \ | \ \ 0 \leq x \leq p \} , \\
    {\color{red} B_{2,1}} &:= \{ (p+1,\, y, \,0) \ | \ 1 \leq y \leq p \}, \\
    {\color{red} B_{2,2}} &:= \{ (x,\, x - 1,\, 0) \ | \ p+2 \leq x \leq e_2\}, \\
    {\color{blue} B_3} &:= \{ (0,y,z) \ | \ 1\leq y\leq e_2-1, \ (y - 1)(p^2 + p) + 1 \leq z \leq y (p^2 + p) \}.
    \end{align*}

Note that in {\color{blue} $B_3$}, $z$ takes all values from $(1 - 1)(p^2 + p) + 1=1 $ to $(e_2-1) (p^2 + p)=p^4-p^2=e_3-1$, that is, all the values in $\Delta_3\setminus\{0\}$, and each such $z$ corresponds to a unique value of $y$. In ${\color{red} B_{2,2}}$, the first coordinate $x\leq e_2<e_1$ so $x\in \Delta_1$, and the second coordinate $x-1\leq e_2-1$ so $x-1\in \Delta_2$.
    
Thus $|{\color{amber} B_1}| = p+1$, $|{\color{red} B_2}| = p + (e_2 - p - 1) = e_2 - 1 = p^2 - p$, and $|{\color{blue} B_3}| = (e_2 - 1)(p^2 + p) = (p^2 - p)(p^2 + p) = p^4 - p^2$. It follows that
    \[ |B| = (p+1) + (p^2 - p) + (p^4 - p^2) = p^4 + 1 = q+1. \]
The distribution of the points of $B$ among the partition classes is illustrated in Figure \ref{fig:ex-ch-grid}, where classes are numbered from left to right and top to bottom. For instance long row $C_0\in\C_{\{2\}}$ is the top row and rectangular cell $C_0\in\C_{\{3\}}$ is the leftmost rectangle.

\begin{figure}[ht]
    \centering
    \includegraphics[width=\textwidth]{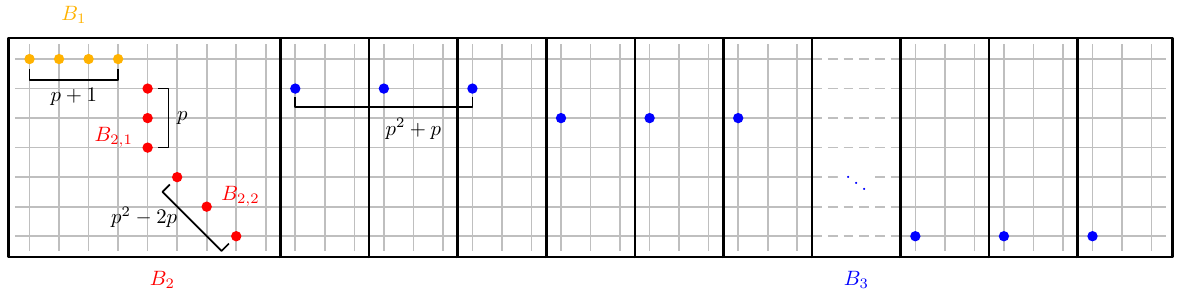}
    \caption{Distribution of points of $B$ in Example \ref{ex:ch-grid}}
    \label{fig:ex-ch-grid}
\end{figure}

The array of $B$ consists of the parameters $(\bm{\nu})\chi_B$ for each $\bm{\nu} \in \Delta_J$ and each proper ancestral subset $J$. By Theorem \ref{t:gwp-design}, the structure $\D = (\PP,B^G)$ is a $2$-design if and only if condition \eqref{2des-examples} holds for any proper nonempty ancestral subset $J$ of $I$. As we mentioned above, the proper nonempty ancestral subsets $J$ of $I$ are $\{2\}$, $\{3\}$, $\{2,3\}$ and $\{1,3\}$.
    
Table \ref{tab:ex-chgrid} lists, for each $J$, the possible non-zero values $(\bm{\nu})\chi_B$ (for ${\bm \nu}\in \Delta_J$), together with the number $\# C_{\bm{\nu}}$ of classes of $\C_J$ giving each such value $(\bm{\nu})\chi_B$. Following this, the value of $\Sq[J]$
is computed by adding, over these values,  the product of $\#C_{\bm{\nu}}$ and $\big((\bm{\nu})\chi_B\big)^2$. The result is listed in the last row of Table \ref{tab:ex-chgrid}.

For instance, for $J = \{2\}$, the class $C_{0} \in \C_{\{2\}}$ contains all $p+1$ points in $B_1$, so that $(0)\chi_B = p+1$,while each of the remaining $p^2 - p$ classes in $\C_{\{2\}}$ contains one point in $B_2$ and $p^2 + p$ points in $B_3$, so that $(\bm{\nu})\chi_B = p^2 + p + 1$ for each $\bm{\nu} \in \{1, \ldots, p^2 - p\}$. This gives the values in the first two rows of column $\{2\}$ of Table \ref{tab:ex-chgrid}, and thus
    \begin{align*}
   \Sq[\{2\}]= \sum_{\bm{\nu} \in \Delta_2} \big((\bm{\nu})\chi_B\big)^2
    &= 1 \cdot (p+1)^2 + (p^2 - p)(p^2 + p + 1)^2 
    \end{align*} 
which is equal to the entry in the   $\Sq[J]$-row of column $\{2\}$. This verifies that all entries in column $\{2\}$ of Table~\ref{tab:ex-chgrid} are correct. The other entries of Table~\ref{tab:ex-chgrid} are computed similarly.

\begin{table}[ht]
     \centering
     \begin{tabular}{r||r|r||r|r||r|r||r|r}
     \hline
     \multirow{2}{1.5cm}{\parbox[r]{1.5cm}{\raggedleft $J$ s.t. \\ $\bm{\nu} \in \Delta_J$}} & \multicolumn{2}{c||}{${\{2\}}$} & \multicolumn{2}{c||}{${\{3\}}$} & \multicolumn{2}{c||}{${\{2,3\}}$} &\multicolumn{2}{c}{${\{1,3\}}$} \\
     \cline{2-9}
     & $\# C_{\bm{\nu}}$ & $(\bm{\nu})\chi_B$ & $\# C_{\bm{\nu}}$ & $(\bm{\nu})\chi_B$ & $\# C_{\bm{\nu}}$ & $(\bm{\nu})\chi_B$ & $\# C_{\bm{\nu}}$ & $(\bm{\nu})\chi_B$\\
     \hline\hline
     & $1$         & $p+1$ & $1$                    & $p^2+1$         & $1$         & $p + 1$&$1$&$p$ \\
     & $p^2 - p$   & $p^2+p+1$   &  $p^4 - p^2$ & $ 1$ & $p^4 - p$ & $1$ &$p^4-p+1$&$1$\\
     \hline
     $\Sq[J]$\phantom{\Big|} & \multicolumn{2}{c||}{\parbox[c]{3cm}{\vspace{3pt} $p^6+p^5+p^4-p^3$ \\ $+ p + 1$}} & \multicolumn{2}{c||}{$2p^4 + p^2 + 1$} & \multicolumn{2}{c||}{$p^4 + p^2 + p + 1 $}& \multicolumn{2}{c}{$p^4 + p^2-p + 1$} \\
     \hline
     \end{tabular}
     \caption{Values of $\Sq[J]$ for $J =  \{2\}, \{3\},\{2,3\},\{1,3\}$}
     \label{tab:ex-chgrid}
 \end{table}

Note that the right side of \eqref{2des-examples} is $|O_J|/v$ and this value together with the boundary $\Bdy{J}$ is given, for each $J$, in Table \ref{tab:chgrid-orb}. Recall that the $|O_J|$ are the $G$-orbit lengths on ordered pairs of distinct points, see Section~\ref{s:orbs}.  

\begin{table}[ht]
    \centering
    \begin{tabular}{lll}
    \hline
    $J$ & $\Bdy{J}$ & $|O_J|/v$ \\
    \hline  
    $\{2\}$ & $\{3\}$  & $(e_3 - 1)e_1=(p^4-p^2)(p^2+p+1)$ \\
    $\{3\}$ & $\{1,2\}$  & $(e_1 - 1)(e_2-1)=p^4-p^2$ \\
    $\{2,3\}$ & $\{1\}$  & $(e_1 - 1)=p^2+p$ \\
    $\{1,3\}$ & $\{2\}$  & $(e_2 - 1)=p^2-p$ \\   
      \hline
    \end{tabular}
    \caption{Orbit sizes of $G = (G_1 \wr G_3) \times G_2$ on ordered pairs of distinct points}
    \label{tab:chgrid-orb}
\end{table}

For each proper nonempty ancestral subset $J$, we now compute the left side of \eqref{2des-examples},  and show, on checking Table~\ref{tab:chgrid-orb}, that  \eqref{2des-examples} holds for each proper non-empty ancestral set $J$. Recall from Lemma~\ref{lem:array-sqs} that $\Sq[I]=k=p^4+1$.

If $J = \{2\}$ then  $\Bdy{J}=\{3\}$, so 
\begin{align*} 
    \sum_{S \subseteq \Bdy{J}} (-1)^{|S|} \Sq[J\cup S]
    &=\Sq[\{2\}]-\Sq[\{2,3\}]\\
       &=p^6+p^5+p^4-p^3+p+1-\left( p^4 + p^2 + p + 1\right)\\
     &= p^6+p^5-p^3-p^2\\
               &= \left(p^4 - p^2\right)\left(p^2 + p + 1\right)
    \end{align*}
    
If $J = \{3\}$ then $\Bdy{J}=\{1,2\}$, so 
    \begin{align*} 
    \sum_{S \subseteq \Bdy{J}} (-1)^{|S|} \Sq[J\cup S]
    &= \Sq[\{3\}]-\Sq[\{2,3\}]-\Sq[\{1,3\}]+\Sq[I]\\    
       &=\left(2p^4 + p^2 + 1\right)-\left(p^4+p^2+p+1\right)-\left(p^4+p^2-p+1\right)+p^4+1\\
    &=p^4-p^2
    \end{align*}

If $J = \{2,3\}$, $\Bdy{J}=\{1\}$, so 
    \begin{align*} 
    \sum_{S \subseteq \Bdy{J}} (-1)^{|S|} \Sq[J\cup S]
    &=\Sq[\{2,3\}]-\Sq[I]\\
    &=\left(p^4+p^2+p+1\right)-\left(p^4+1\right)=p^2+p
    \end{align*}

If $J = \{1,3\}$, $\Bdy{J}=\{2\}$, so 
    \begin{align*} 
    \sum_{S \subseteq \Bdy{J}} (-1)^{|S|} \Sq[J\cup S]
    &=\Sq[\{1,3\}]-\Sq[I]\\
    &=\left(p^4+p^2-p+1\right)-\left(p^4+1\right)=p^2-p
    \end{align*} 

Thus \eqref{2des-examples} holds for each proper non-empty ancestral subset $J$ of $I$ and hence, by Theorem~\ref{t:gwp-design}, $\D = (\PP,B^G)$ is a $2$-design.
\end{example}

\subsubsection{Example for Figure \ref{fig:V}}

Suppose that the point set $\PP$ admits a poset of partitions that correspond to the V-shaped poset $(I,\preccurlyeq)$ with nodes $1$, $2$, and $3$, as shown in Figure \ref{fig:V}, which for convenience we show again below. 
    \begin{center}
        \includegraphics{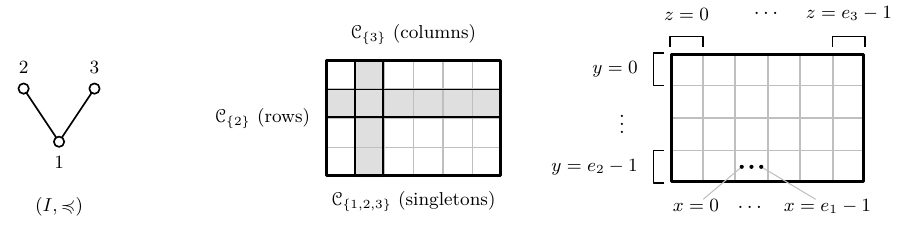}
    \end{center}
Let $G = G_1 \wr (G_2 \times G_3)$, where each $G_i$ is a $2$-transitive subgroup of $S_{e_i}$. 
It follows from Theorems~\ref{thm:dir} and~\ref{thm:wr} that $G$ is a generalised wreath product group preserving this poset of partitions. In particular, these results imply that  the proper nonempty ancestral subsets of $I$ are $\{2\}$, $\{3\}$, and $\{2,3\}$, so the nontrivial point partitions of $\PP$ preserved by $G$ are:
\begin{itemize}
    \item $\C_{\{2\}}$ with classes $C_y=\{(x,y,z) \mid x\in\Delta_1, \ z\in\Delta_3\}$, for $y\in \Delta_2$; the  $\C_{\{2\}}$-classes correspond to the  rows  in Figure~\ref{fig:V};
    \item $\C_{\{3\}}$ with classes $C_z=\{(x,y,z) \mid x\in\Delta_1, \ y\in\Delta_2\}$, for $z\in \Delta_3$; the $\C_{\{3\}}$-classes correspond to the columns in Figure~\ref{fig:V};
    \item $\C_{\{2,3\}}$ with classes $C_{(y,z)}=\{(x,y,z) \mid x\in\Delta_1\}$ for $(y,z)\in \Delta_{\{2,3\}}=\Delta_2\times \Delta_3$; the $\C_{\{2,3\}}$-class $C_{(y,z)}$ is the intersection of row $C_y$ and column $C_z$ in Figure~\ref{fig:V}.
\end{itemize} 

Thus for a point $\bm{\delta}=(x,y,z)\in\PP$, the second coordinate represents the row $C_{y}\in\C_{\{2\}}$ containing $\bm{\delta}$, the third coordinate represents the column $C_{z}\in\C_{\{3\}}$ containing $\bm{\delta}$, and the first coordinate identifies which point of $C_{(y,z)}=C_y\cap C_z$  is equal to $\bm{\delta}$.

Note $\CC^* = \{ \C_{A[i]} \ | \ i \in I \}$ where $A[1]=\{1,2,3\}=I$, $A[2]=\{2\}$, $A[3]=\{3\}$. In this case $\C_{A[1]}=\C_I$ consists of singletons. 

\begin{example} \label{ex:V}
Take $B = {\color{amber} B_1} \cup {\color{red} B_2} \cup {\color{blue} B_3}$ where
    \begin{align*}
    {\color{amber} B_1} &:= \{ (x, 0, 0) \ | \ 0 \leq x \leq p \}, \\
    {\color{red} B_2} &:= \{ (0,y, 0) \ | \ 1 \leq y \leq e_2-1 \}, \ \text{and} \\
       {\color{blue} B_3} &:= \{ (0,y,z) \ | \ 1 \leq y \leq e_2-1, \  (y - 1)(p^2 + p) + 1 \leq z \leq y (p^2 + p) \}     
    \end{align*}
Note that in {\color{blue} $B_3$}, $z$ takes all values from $(1 - 1)(p^2 + p) + 1=1 $ to $(e_2-1) (p^2 + p)=p^4-p^2=e_3-1$, that is, all the values in $\Delta_3\setminus\{0\}$, and each such $z$ corresponds to a unique value of $y$.
     
 Then $|{\color{amber} B_1}| = p+1$, $|{\color{red} B_2}| = e_2 - 1$, and $|{\color{blue} B_3}| = (e_2 - 1)(e_1 - 1) = (p^2 - p)(p^2 + p) = p^4 - p^2 = e_3 - 1$. Hence
      \[ |B| = p + 1 + (e_2 - 1) + (e_3 - 1) = (p + 1)  + (p^2 - p)+ (p^4 - p^2) = p^4 + 1 = q + 1. \]
The distribution of the points in $B$ among the $\C_{\{2,3\}}$-classes, as well as the points that comprise the subsets ${\color{amber} B_1}$, ${\color{red} B_2}$, and ${\color{blue} B_3}$, is shown in Figure \ref{fig:ex-V}, where classes are numbered from left to right and top to bottom.

\begin{figure}[ht]
    \centering
    \includegraphics{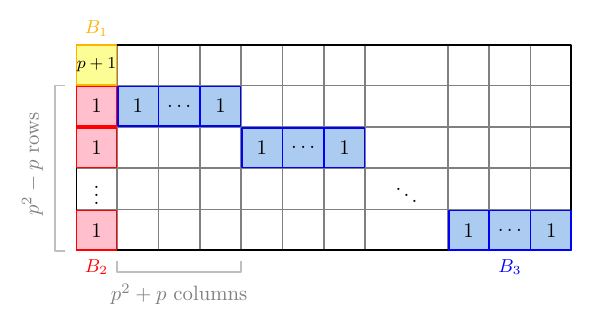}
    \caption{Distribution of points of $B$ in Example \ref{ex:V}}
    \label{fig:ex-V}
\end{figure}

In order to check that the structure $\D = (\PP,B^G)$ is a $2$-design, we will follow similar methods to Example~\ref{ex:ch-grid}:
we need to check that condition \eqref{2des-examples} holds for any proper nonempty ancestral subset $J$ of $I$. As we mentioned above, the  proper nonempty ancestral subsets $J$ of $I$ are $\{1,3\}$, $\{2,3\}$ and $\{3\}$.
    Table \ref{tab:ex-V-inv} lists 
the relevant information to compute $\Sq[J]$ for each such $J$.

For instance, for $J = \{2\}$ the class $C_{0} \in \C_{\{2\}}$ contains all $p+1$ points in $B_1$, so that $(0)\chi_B = p+1$, while each of the remaining $p^2 - p$ classes in $\C_{\{2\}}$ contains one point in $B_2$ and $p^2 + p$ points in $B_3$, so that $(\bm{\nu})\chi_B = p^2 + p + 1$ for each $\bm{\nu} \in \{1, \ldots, p^2 - p\}$. This gives the values in the first two lines of the column $\{2\}$ of Table \ref{tab:ex-V}, and thus
    \begin{align*}
    \Sq[\{2\}]
    &= 1 \cdot (p+1)^2 + (p^2 - p)(p^2 + p + 1)^2 \\
      &= p + (p^4 - p + 1)(p^2 + p + 1)
    \end{align*} 

\begin{table}[ht]
     \centering
     \begin{tabular}{r||r|r||r|r||r|r}
     \hline
     \multirow{2}{1.5cm}{\parbox[r]{1.5cm}{\raggedleft $J$ s.t. \\ $\bm{\nu} \in \Delta_J$}} & \multicolumn{2}{c||}{${\{2,3\}}$} & \multicolumn{2}{c||}{${\{2\}}$} & \multicolumn{2}{c}{${\{3\}}$} \\
     \cline{2-7}
     & $\# C_{\bm{\nu}}$ & $(\bm{\nu})\chi_B$ & $\# C_{\bm{\nu}}$ & $(\bm{\nu})\chi_B$ & $\# C_{\bm{\nu}}$ & $(\bm{\nu})\chi_B$ \\
     \hline\hline
     & $1$         & $p+1$ & $1$                    & $p+1$         & $1$         & $p^2 + 1$ \\
     & $p^4 - p$   & $1$   & \hspace{1cm} $p^2 - p$ & $p^2 + p + 1$ & $p^4 - p^2$ & $1$ \\
     \hline
     $\Sq[J]$\phantom{\Big|} & \multicolumn{2}{c||}{$p^4 + p^2 + p + 1$} & \multicolumn{2}{c||}{$p + (p^4 - p + 1)(p^2 + p + 1)$} & \multicolumn{2}{c}{$2p^4 + p^2 + 1$} \\
     \hline
     \end{tabular}
  \caption{Values of $\Sq[J]$ for $J = \{2,3\}, \{2\}, \{3\}$}
     \label{tab:ex-V}
 \end{table}

Note that the right side of \eqref{2des-examples} is $|O_J|/v$. This value, together with the boundary $\Bdy{J}$, is given for each $J$ in Table \ref{tab:V-orb}.

 \begin{table}[ht]
    \centering
    \begin{tabular}{llll}
    \hline
    $J$ & $\Bdy{J}$ & Pairs in $O_J$ & $|O_J|/v$ \\
    \hline
    $\{2,3\}$ & $\{1\}$ & same $\C_1$-class & $(e_1 - 1)=p^2+p$ \\
     $\{2\}$ & $\{3\}$ & same row, different columns & $(e_3 - 1)e_1=(p^4-p^2)(p^2+p+1)$ \\
    $\{3\}$ & $\{2\}$ & same column, different rows & $(e_2 - 1)e_1=(p^2-p)(p^2+p+1)$ \\
      \hline
    \end{tabular}
    \caption{Orbits of $G = G_1 \wr (G_2 \times G_3)$ on ordered pairs of distinct points}
    \label{tab:V-orb}
\end{table}

For each proper nonempty ancestral subset $J$, we compute the left side of \eqref{2des-examples} (recalling \eqref{eq:sum-squares}), and record the value in Table~\ref{tab:left}. We see that in each case the left side of \eqref{2des-examples} is equal to the right side, listed in the last column of Table \ref{tab:V-orb}. This concludes the proof that $\D = (\PP,B^G)$ is a $2$-design, by Theorem~\ref{t:gwp-design}.

 \begin{table}[ht]
    \centering
    \begin{tabular}{llll}
    \hline
    $J$ & $\Bdy{J}$ & $\sum_{S \subseteq \Bdy{J}} (-1)^{|S|} \Sq[J\cup S]$ & left side of \eqref{2des-examples} \\
    \hline
    $\{2,3\}$ & $\{1\}$ & $\Sq[\{2,3\}]-\Sq[I]$ & $p^2+p$ \\
    $\{2\}$ & $\{3\}$ & $\Sq[\{2\}]-\Sq[\{2,3\}]$ & $\left(p^4 - p^2\right)\left(p^2 + p + 1\right)$ \\
    $\{3\}$ & $\{2\}$ & $\Sq[\{3\}]-\Sq[\{2,3\}]$ & $(p^2-p)(p^2+p+1)$ \\   
 
    \hline
    \end{tabular}
    \caption{Values of $\sum_{S \subseteq \Bdy{J}} (-1)^{|S|} \Sq[J\cup S]$}
    \label{tab:left}
\end{table}
%
%
\end{example}

\subsubsection{Example for Figure \ref{fig:V-inv}}

Suppose that the point set $\PP$ admits a poset of partitions that corresponds to the inverted V-shaped poset $(I,\preccurlyeq)$ with nodes $1$, $2$, and $3$, as shown in Figure \ref{fig:V-inv}, which we show again below. 
    \begin{center}
    \includegraphics{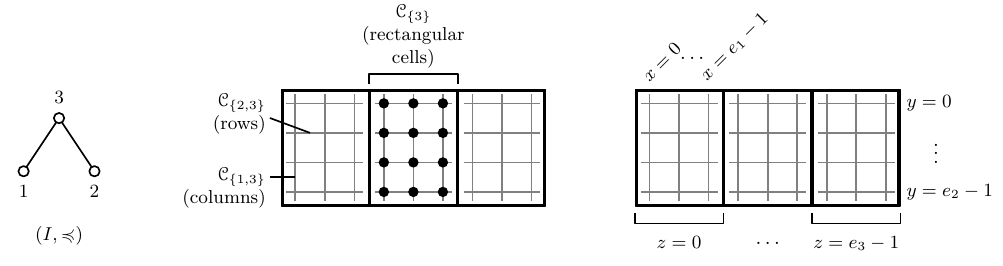}
    \end{center}
Let $G = (G_1 \times G_2) \wr G_3$, where each $G_i$ is a $2$-transitive subgroup of $S_{e_i}$. 
It follows from Theorems~\ref{thm:dir} and~\ref{thm:wr} that $G$ is a generalised wreath product group preserving this poset of partitions. In particular, these results imply that  the proper nonempty ancestral subsets of $I$ are $\{3\}$, $\{1,3\}$, and $\{2,3\}$, so the nontrivial point partitions of $\PP$ preserved by $G$ are:
\begin{itemize}
     \item $\C_{\{3\}}$ with classes $C_z=\{(x,y,z) \mid x\in\Delta_1, \ y\in\Delta_2\}$, for $z\in \Delta_3$; the $\C_{\{3\}}$-classes correspond to the rectangular cells in Figure~\ref{fig:V-inv};
      \item $\C_{\{1,3\}}$ with classes $C_{(x,z)}=\{(x,y,z) \mid y\in\Delta_2\}$ for $(x,z)\in \Delta_{\{1,3\}}=\Delta_1\times \Delta_3$; the $\C_{\{1,3\}}$-class $C_{(x,z)}$ represents a column in rectangular cell $C_z$ in Figure~\ref{fig:V-inv}.
     \item $\C_{\{2,3\}}$ with classes $C_{(y,z)}=\{(x,y,z) \mid x\in\Delta_1\}$ for $(y,z)\in \Delta_{\{2,3\}}=\Delta_2\times \Delta_3$; the $\C_{\{2,3\}}$-class $C_{(y,z)}$ represents a row in rectangular cell $C_z$ in Figure~\ref{fig:V-inv}.
\end{itemize} 

Thus for a point $\bm{\delta}=(x,y,z)\in\PP$, the third coordinate represents the rectangular cell $C_{z}\in\C_{\{3\}}$ containing $\bm{\delta}$, the second coordinate represents the row $C_{(y,z)}\in\C_{\{2,3\}}$ inside $C_{z}$ containing $\bm{\delta}$, and the first coordinate represents the column $C_{(x,z)}$ inside $C_{z}$ containing $\bm{\delta}$.

Note $ \CC^* = \{ \C_{A[i]} \ | \ i \in I \}$ where $A[1]=\{1,3\}=I$, $A[2]=\{2,3\}$, $A[3]=\{3\}$. 

\begin{example} \label{ex:V-inv}
Take $B = {\color{amber} B_1} \cup {\color{red} B_2} \cup {\color{blue} B_3}$ where ${\color{red} B_2} ={\color{red} B_{2,1} }\cup {\color{red}B_{2,2}}$ and
    \begin{align*}
    {\color{amber} B_1} &:= \{ (x,0,0) \ | \ 0 \leq x \leq p \}, \\
  {\color{red}  B_{2,1} }&:= \{ (p+1,\, y, \, 0) \ | \ 1 \leq y \leq p \}, \\
    {\color{red} B_{2,2}} &:= \{ (x, \, x - 1, \, 0) \ | \ p+2 \leq x \leq e_2\}, \ \text{and} \\
    {\color{blue} B_3 }&:= \{ (0,0,z) \ | \ 1\leq z\leq e_3-1\}.
    \end{align*}
Then $|{\color{amber} B_1}| = p+1$, $|{\color{red} B_2}| = p + (p^2 - 2p) = e_2 - 1$, and $|{\color{blue} B_3}| = e_3 - 1$, and thus
    \[ |B| = (p+1) + (e_2 - 1) + (e_3 - 1) = p^4 + 1 = q+1. \]
The points that comprise the block $B$ are shown in Figure \ref{fig:ex-V-inv}, as well as the subsets ${\color{amber} B_1}$, $ {\color{red} B_{2,1}}$, $ {\color{red} B_{2,2}}$, and ${\color{blue} B_3}$. Note that classes are numbered from left to right and top to bottom.

\begin{figure}[ht]
    \centering
    \includegraphics{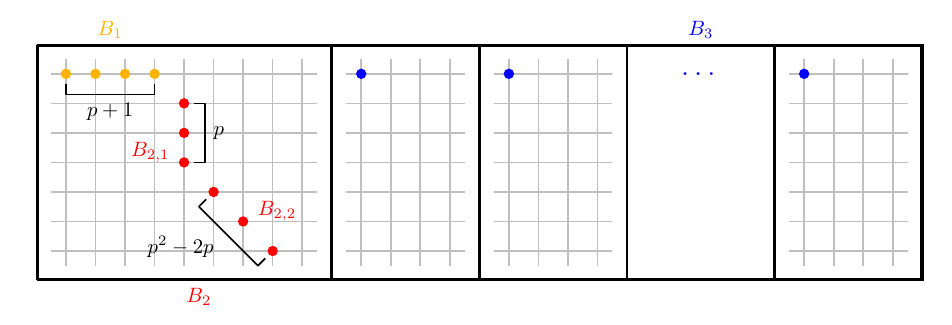}
    \caption{Distribution of points of $B$ in Example \ref{ex:V-inv}}
    \label{fig:ex-V-inv}
\end{figure}

In order to check that the structure $\D = (\PP,B^G)$ is a $2$-design, we will follow similar methods to  Example~\ref{ex:ch-grid}:
we need to check that condition \eqref{2des-examples} holds for any proper nonempty ancestral subset $J$ of $I$. As we mentioned above, the  proper nonempty ancestral subsets $J$ of $I$ are $\{1,3\}$, $\{2,3\}$ and $\{3\}$.
    Table \ref{tab:ex-V-inv} lists 
the relevant information to compute $\Sq[J]$ for each such $J$.

For instance, for $J = \{3\}$, the class $C_{0} \in \C_{\{3\}}$ contains all the points in ${\color{amber} B_1} \cup {\color{red} B_2}$, so that $(0)\chi_B = |B_1 \cup B_2| = (p+1) + (p^2 - p) = p^2 + 1$, while each of the remaining $p^4 - p^2$ classes in $\C_{\{3\}}$ contains one point in ${\color{blue} B_3}$, so that $(\bm{\nu})\chi_B = 1$ for each $\bm{\nu} \in \{1, \ldots, p^4 - p^2\}$. This gives the values in the first two rows under column $\{3\}$ of Table \ref{tab:ex-V-inv}, and thus
    \[
    \Sq[\{3\}]
    = 1 \cdot \big( (p + 1) + (p^2 - p) \big)^2 + (p^4 - p^2) \cdot 1^2
    = 2p^4 + p^2 + 1
    \]

\begin{table}[ht]
     \centering
     \begin{tabular}{r||r|r||r|r||r|r}
     \hline
     \multirow{2}{1.5cm}{\parbox[r]{1.5cm}{\raggedleft $J$ s.t. \\ $\bm{\nu} \in \Delta_J$}} & \multicolumn{2}{c||}{${\{1,3\}}$} & \multicolumn{2}{c||}{${\{2,3\}}$} & \multicolumn{2}{c}{${\{3\}}$} \\
     \cline{2-7}
     & $\# C_{\bm{\nu}}$ & $(\bm{\nu})\chi_B$ & $\# C_{\bm{\nu}}$ & $(\bm{\nu})\chi_B$ & $\# C_{\bm{\nu}}$ & $(\bm{\nu})\chi_B$ \\
     \hline\hline
     & $p^4 - p + 1$  & $1$  & $1$        & $p+1$ & $1$         & $p^2 + 1$ \\
     & $1$                  & $p$ & $p^4 - p$   & $1$   & $p^4 - p^2$ & $1$ \\
     \hline
     $\Sq[J]$\phantom{\Big|} & \multicolumn{2}{c||}{$p^4 + p^2 - p + 1$} & \multicolumn{2}{c||}{$p^4 + p^2 + p + 1$} & \multicolumn{2}{c}{$2p^4 + p^2 + 1$} \\
     \hline
     \end{tabular}
  \caption{Values of $\Sq[J]$ for $J = \{1,3\}, \{2,3\}, \{3\}$}
     \label{tab:ex-V-inv}
 \end{table}

Note that the right side of \eqref{2des-examples} is $|O_J|/v$ and this value together with the boundary $\Bdy{J}$ is given, for each $J$, in Table \ref{tab:Vinv-orb}.

 \begin{table}[ht]
    \centering
    \begin{tabular}{llll}
    \hline
    $J$ & $\Bdy{J}$ & Pairs in $O_J$ & $|O_J|/v$ \\
    \hline
    $\{1,3\}$ & $\{2\}$ & same $\C_3$-class, same column, different rows & $e_2 - 1=p^2-p$ \\
    $\{2,3\}$ & $\{1\}$ & same $\C_3$-class, same row, different columns & $e_1 - 1=p^2+p$ \\
    $\{3\}$ & $\{1,2\}$ & same $\C_3$-class, different rows and different columns & $(e_1 - 1)(e_2 - 1)=p^4-p^2$ \\
    \hline
    \end{tabular}
    \caption{Orbits of $G = (G_1 \times G_2) \wr G_3$ on ordered pairs of distinct points}
    \label{tab:Vinv-orb}
\end{table}

For each proper nonempty ancestral subset $J$, we compute the left side of \eqref{2des-examples} (recalling \eqref{eq:sum-squares}), and record the value in Table~\ref{tab:left-Vinv}. We see that in each case the left side of \eqref{2des-examples} is equal to the right side, listed in the last column of Table \ref{tab:Vinv-orb}.
This concludes the proof that $\D = (\PP,B^G)$ is a $2$-design, by Theorem~\ref{t:gwp-design}.

 \begin{table}[ht]
    \centering
    \begin{tabular}{llll}
    \hline
    $J$ & $\Bdy{J}$ & $\sum_{S \subseteq \Bdy{J}} (-1)^{|S|} \Sq[J\cup S]$ & left side of \eqref{2des-examples} \\
    \hline
    $\{1,3\}$ & $\{2\}$ & $\Sq[\{1,3\}]-\Sq[I]$ & $p^2-p$ \\
     $\{2,3\}$ & $\{1\}$ & $\Sq[\{2,3\}]-\Sq[I]$ & $p^2+p$ \\
    $\{3\}$ & $\{1,2\}$ & $\Sq[\{3\}]-\Sq[\{1,3\}]-\Sq[\{2,3\}]+\Sq[I]$ & $p^4-p^2$ \\
   
    \hline
    \end{tabular}
    \caption{Values of $\sum_{S \subseteq \Bdy{J}} (-1)^{|S|} \Sq[J\cup S]$}
    \label{tab:left-Vinv}
\end{table}
%
%
\end{example}

\subsection{The N-poset}

In this section, we give an infinite family of examples for just one poset $\I=(I,\preccurlyeq)$ with $|I|=4,$ namely for the \emph{N-poset}  in Figure~\ref{fig:posets-4}.  Thus the point set $\PP$ admits a poset of partitions corresponding to $\I$ with nodes $1$, $2$, $3$, and $4$, as shown in Figure \ref{fig:posets-4}, which we display again below. We have $\PP = \prod_{i=1}^4\Delta_i$ with each $|\Delta_i|=e_i\geq 2$, and we take the $e_i$ as follows, for some integer $p\geq2$: 
    \[ 
    e_1 = p^2 + p + 1, \ e_2 = p^2 - p + 1, \ e_3 = p^4 - p^2 + 1, \ e_4 = p^8 - p^4 + 1. 
    \]
For the $\I$-imprimitive group we take the  generalised wreath product  $G = \prod_{(I,\preccurlyeq)} (G_i,\Delta_i)$ corresponding to $\I$, where each $G_i$ is a $2$-transitive subgroup of $S_{e_i}$.  Note that  it is not possible to describe $G$ simply in terms of direct products and wreath products as was possible for the examples with $|I|=3$, since the hypotheses of neither Theorems~\ref{thm:dir} nor~\ref{thm:wr} are satisfied for $\I$.

We take $\Delta_i=\{0,1,2,\ldots, e_i-1\}\subset\mathbb{Z}$, and we note that $e_2<e_1<e_3<e_4$, and  
    \begin{equation}\label{v-forN}
       v = e_1e_2e_3e_4 = q^2 + q + 1, \quad \text{where} \ q = p^8.  
    \end{equation} 

\begin{center}
\includegraphics{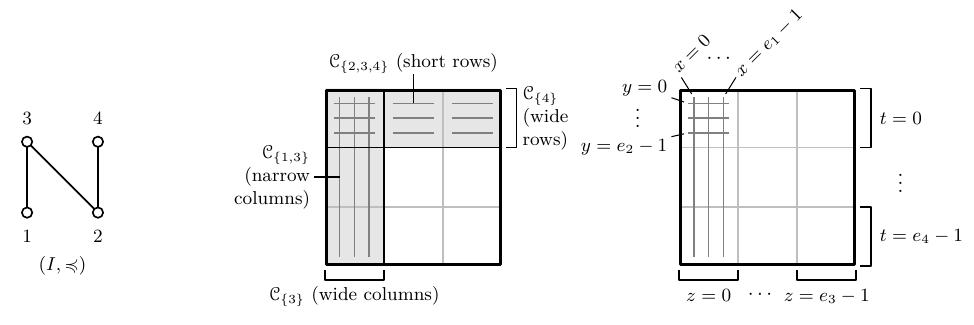}
\end{center}

As in the previous examples, each block has size $k = q + 1$, so that
    \[ \frac{k(k-1)}{v-1} = \frac{(q+1)q}{q^2 + q} = 1 \]
and for each ancestral subset $J$,  condition \eqref{2deswithmu} becomes condition \eqref{2des-examples}.

The proper nonempty ancestral subsets of $I$ are $\{3\}$, $\{4\}$, $\{3,4\}$, $\{1,3\}$, $\{1,3,4\}$ and $\{2,3,4\}$. These subsets together with $\varnothing$ and $I$ correspond to partitions in $(\CC,\preccurlyeq)$. On the other hand,
 \[
 \mbox{$A[1]=\{1,3\}, \ A[2]=\{2,3,4\}, \ A[3]=\{3\}, \ A[4]=\{4\}$},
 \]
so the partition-poset $\CC^* = \{ \C_{A[i]} \ | \ i \in I \}$. Classes in the partitions corresponding to the remaining ancestral sets $\{3,4\}$ and $\{1,3,4\}$ are  intersections of two classes from distinct partitions in $ \CC^*$, as explained in Section \ref{s:posetbkstr}. 
We now describe classes in each of the partitions in $(\CC,\preccurlyeq)$, noting that each of these partitions is preserved by $G$. 
 
\begin{itemize}
    \item $\C_{\{3\}}$ with classes $C_z = \{(x,y,z,t) \mid x\in\Delta_1, \ y\in\Delta_2, \ t\in\Delta_4\}$, for $z \in \Delta_3$; the $\C_{\{3\}}$-classes correspond to the wide columns in Figure~\ref{fig:posets-4}.
    \item $\C_{\{4\}}$ with classes $C_t = \{(x,y,z,t) \mid x\in\Delta_1, \ y\in\Delta_2, \ z\in\Delta_3\}$, for $t \in \Delta_4$; the $\C_{\{4\}}$-classes correspond to the wide rows in Figure~\ref{fig:posets-4}.
    \item $\C_{\{3,4\}}$ with classes $C_{(z,t)} = \{(x,y,z,t) \mid x\in\Delta_1, \  y\in\Delta_2\}$, for $(z,t)\in \Delta_3 \times \Delta_4$; the $\C_{\{3,4\}}$-classes correspond to cells that are intersections of wide rows and wide columns in Figure~\ref{fig:posets-4}.
    \item $\C_{\{1,3\}}$ with classes $C_{(x,z)} = \{(x,y,z,t) \mid y\in\Delta_2, \ t\in\Delta_4\}$ for $(x,z)\in \Delta_1 \times \Delta_3$; the $\C_{\{1,3\}}$-class $C_{(x,z)}$ represents a narrow column in wide column $C_z$ in Figure~\ref{fig:posets-4}.
    \item $\C_{\{1,3,4\}}$ with classes $C_{(x,z,t)} = \{(x,y,z,t) \mid y\in\Delta_2\}$ for $(x,z,t)\in \Delta_1\times \Delta_3\times\Delta_4$; the $\C_{\{1,3,4\}}$-classes correspond to intersections of $\C_{\{1,3\}}$-classes (narrow columns) with $\C_{\{4\}}$-classes (wide rows) in Figure~\ref{fig:posets-4}.
    \item $\C_{\{2,3,4\}}$ with classes $C_{(y,z,t)} = \{(x,y,z,t) \mid x\in\Delta_1\}$ for $(y,z,t)\in \Delta_2\times \Delta_3\times\Delta_4$; the $\C_{\{2,3,4\}}$-class $C_{(y,z,t)}$ corresponds to a short row inside class $C_{(z,t)}\in \C_{\{3,4\}}$, in Figure~\ref{fig:posets-4}.
\end{itemize}

Thus for a point $\bm{\delta} = (x,y,z,t)\in\PP$,  the fourth coordinate represents the wide row $C_{t}\in\C_{\{4\}}$ containing $\bm{\delta}$, the third coordinate represents the wide column $C_{z}\in\C_{\{3\}}$ containing $\bm{\delta}$, the first coordinate represents the narrow column $C_{(x,z)}\in\C_{\{1,3\}}$ inside the wide column $C_{z}$ containing $\bm{\delta}$, and the second coordinate represents short row $C_{(y,z,t)}$ inside cell $C_{(z,t)}$ containing $\bm{\delta}$. Now we define the base block $B$ with reference to Figure~\ref{fig:ex-N-block}.

\begin{example} \label{ex:N}
Take $B = \bigcup_{i=1}^4 B_i$, where ${\color{red} B_2} ={\color{red} B_{2,1}} \cup {\color{red}B_{2,2}}$, ${\color{green}B_4} = {\color{green}B_{4,1}} \cup {\color{green}B_{4,2}} $ and
    \begin{align*}
    {\color{amber} B_1} &:= \{ (x,\, 0,\, 0,\, 0) \ | \ 0 \leq x \leq p \}, \\ 
   {\color{red} B_{2,1}} &:= \{ (p+1,\, y,\, 0,\, 0) \ | \ 1 \leq y \leq p \}, \\
    {\color{red} B_{2,2}} &:= \{ (x,\, x - 1,\, 0,\, 0) \ | \ p+2 \leq x \leq e_2 \}, \\
    {\color{blue} B_3} &:= \left\{ (0,\, 0,\, z,\, 0) \ | \ 1\leq z\leq e_3-1 \right\}, \\
    {\color{green}B_{4,1}} &:= \left\{ (x,\, 0,\, z,\, (z - 1)\left(p^4 + p^2\right)+t) \ | \ 0 \leq x \leq 2p-1, \ 1\leq z\leq e_3-1  , \right. \\[-3pt]
        &\phantom{:=}\;\; \left. x p^2 + 1 \leq t \leq (x + 1) p^2 \right\}, \ \text{and} \\
    {\color{green} B_{4,2}} &:= \left\{ (x,\, 0,\, z,\, (z - 1)\left(p^4 + p^2\right)+t) \ | \ 2p \leq x \leq p^2 + p - 1, \ 1 \leq z \leq e_3-1, \right. \\[-3pt]
        &\phantom{:=}\;\; \left.2p^3 + (x - 2p)(p^2 - p) + 1 \leq t \leq 2p^3 + (x - 2p + 1)(p^2 - p) \right\}.\\ 
     \end{align*}
    The points that comprise the block $B$ are shown in Figure \ref{fig:ex-N-block}, as well as the subsets ${\color{amber} B_1}$, ${\color{red} B_{2,1}}$, ${\color{red} B_{2,2}}$,  ${\color{blue} B_3}$ and ${\color{green}B_4}$. Note that classes are numbered from left to right and top to bottom. 
    Observe that $ {\color{amber} B_1},  {\color{red} B_2}, {\color{blue} B_3}$ are all contained in $C_0\in\C_{\{4\}}$ (the wide row at the top of Figure~\ref{fig:posets-4}), and are the same subsets as those in Example~\ref{ex:V-inv}. The `zoom in' on the right in Figure~\ref{fig:ex-N-block} represents the intersection of ${\color{green}B_4}$ with each wide column $C_z$, which we now explain in more detail.
    
To understand ${\color{green}B_4}$, we consider a fixed value of $z\in\Delta_3\setminus\{0\}$, that is, a fixed wide column $C_z$. The fourth coordinates of the  points of  ${\color{green}B_{4}} $  with third coordinate $z$ are all of the form $(z - 1)\left(p^4 + p^2\right)+t$;  there is one point in $ {\color{green}B_{4,1}} $ for each value of $t$ from $1$ to $2p^3$; and  there is one point in $ {\color{green}B_{4,2}} $ for each value of $t$ from $2p^3+1$ to $p^4+p^2$.
Recall that distinct fourth coordinates correspond to distinct wide rows. The second coordinate of all these points is $0$ indicating that the points are all in the top short row of the cell containing them; and the first coordinates  of all these points indicate which narrow column within $C_z$ the points belong to.
 
Thus from top to bottom, and starting at wide row $C_{(z - 1)\left(p^4 + p^2\right)+1}$, we have  one point in narrow column $C_{(0,z)}$ of the first $p^2$ wide rows, then one point in narrow column $C_{(1,z)}$ of the next $p^2$ wide rows, and so on, up to one point in narrow  column $C_{(2p-1,\,z)}$ of $p^2$ wide rows, finishing in the $2p^3$-th wide row. 
Then for the next narrow column  $C_{(2p,z)}$,  we have  one point from each of the next $p^2-p$ wide rows, continuing to the last column $C_{(p^2+p-1,\,z)}$, where we have one point from each of the next  $p^2-p$ wide rows finishing at the $(p^4+p^2)$-th wide row. The last wide row containing a point of ${\color{green}B_4}$ corresponds to $z=e_3-1=p^4-p^2$ and is $C_u\in\C_{\{4\}}$ where  $u=(z - 1)\left(p^4 + p^2\right)+p^4+p^2=(p^4-p^2 - 1)\left(p^4 + p^2\right)+p^4+p^2=p^8-p^4=e_4-1$, that is to say, $C_u$ is in fact the last wide row.
    
Now $|{\color{amber} B_1}| = p + 1$, $|{\color{red} B_2}| = p + (e_2 - (p+1)) = e_2 - 1= p^2 - p$, $|{\color{blue} B_3}| = e_3 - 1=p^4-p^2$, and
    \begin{align*}
    |{\color{green}B_4}|
    &= (e_3 - 1)\left[ 2p\cdot p^2 + (p^2-p)^2\right]  \\
    &= (p^4-p^2)(p^4+p^2)\\
    &= p^8-p^4= e_4 - 1,
    \end{align*}
so that
    \[ |B| = (p+1)+(p^2-p)+(p^4-p^2)+(p^8-p^4) 
    = p^8+1=q + 1. 
    \]

\begin{figure}[ht]
    \centering
    \includegraphics[width=\textwidth]{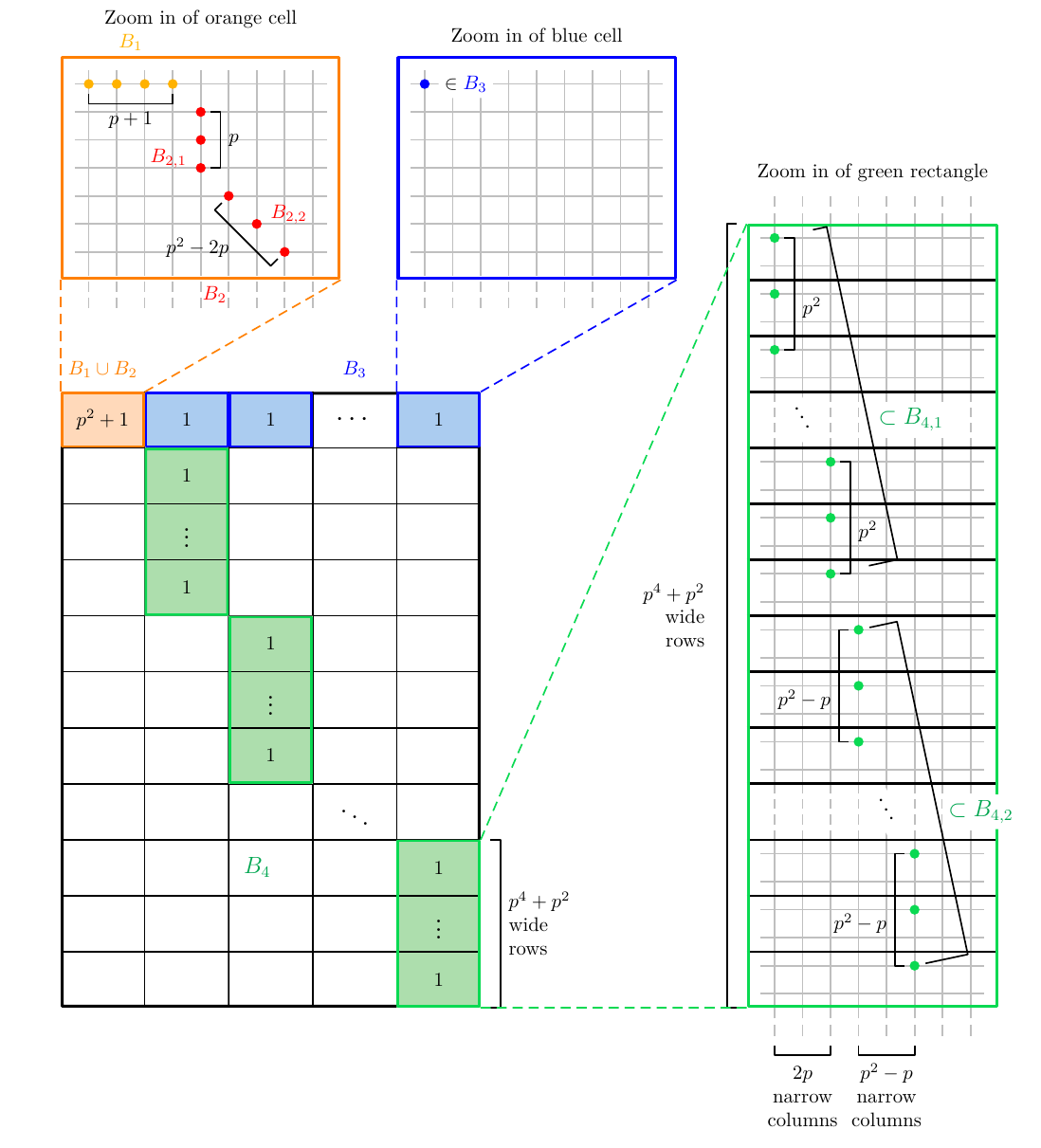}
    \caption{Figure for Example \ref{ex:N}}
    \label{fig:ex-N-block}
\end{figure}

In order to check that the incidence structure $\D = (\PP,B^G)$ is a $2$-design, we follow similar methods to  Example~\ref{ex:ch-grid}:
we need to check that condition \eqref{2des-examples} holds for each proper nonempty ancestral subset $J$ of $I$, which we recall are   $\{3\}$, $\{4\}$, $\{3,4\}$, $\{1,3\}$, $\{1,3,4\}$ and $\{2,3,4\}$.
Recalling \eqref{eq:sum-squares}, Table \ref{tab:ex-N} lists the relevant information needed to compute $\Sq[J]$ for each of these subsets $J$.    

\begin{table}[ht]
     \centering
     \begin{tabular}{r||r|r||r|r||r|r}
     \hline
     \multirow{2}{1.5cm}{\parbox[r]{1.5cm}{\raggedleft $J$ s.t. \\ $\bm{\nu} \in \Delta_J$}} & \multicolumn{2}{c||}{${\{1,3,4\}}$} & \multicolumn{2}{c||}{${\{2,3,4\}}$} & \multicolumn{2}{c}{${\{1,3\}}$} \\
     \cline{2-7}
     & $\# C_{\bm{\nu}}$ & $(\bm{\nu})\chi_B$ & $\# C_{\bm{\nu}}$ & $(\bm{\nu})\chi_B$ & $\# C_{\bm{\nu}}$ & $(\bm{\nu})\chi_B$ \\
     \hline\hline
     & $1$           & $p$  & $1$       & $p+1$ & $1$                    & $p$ \\
     & $p^8 - p + 1$ & $1$& $p^8 - p$ & $1$   & $p^2 - p + 1$          & $1$ \\
     &               &     &           &       & $p^4 - p^2$            & $p^2 + 1$ \\
     &               &     &           &       & $(2p - 1)(p^4 - p^2)$  & $p^2$ \\
     &               &     &           &       & $(p^2 - p)(p^4 - p^2)$ & $p^2 - p$ \\
     \hline
     $\Sq[J]$\phantom{\Big|} & \multicolumn{2}{c||}{$p^8 + p^2 - p + 1$} & \multicolumn{2}{c||}{$p^8 + p^2 + p + 1$} & \multicolumn{2}{c}{\parbox{4.5cm}{\centering $e_4(e_2 + 1) + p^4-1$}} \\
     \hline
     \hline
     \multirow{2}{1.5cm}{\parbox[r]{1.5cm}{\raggedleft $J$ s.t. \\ $\bm{\nu} \in \Delta_J$}} & \multicolumn{2}{c||}{${\{3,4\}}$} & \multicolumn{2}{c||}{${\{3\}}$} & \multicolumn{2}{c}{${\{4\}}$} \\
     \cline{2-7}
     & $\# C_{\bm{\nu}}$ & $(\bm{\nu})\chi_B$ & $\# C_{\bm{\nu}}$ & $(\bm{\nu})\chi_B$ & $\# C_{\bm{\nu}}$ & $(\bm{\nu})\chi_B$ \\
     \hline\hline
     & $1$         & $p^2 + 1$ & $1$         & $p^2 + 1$       & $1$         & $p^4 + 1$ \\
     & $p^8 - p^2$ & $1$       & $p^4 - p^2$ & $p^4 + p^2 + 1$ & $p^8 - p^4$ & $1$ \\
     \hline
     $\Sq[J]$\phantom{\Big|} & \multicolumn{2}{c||}{$p^8 + p^4 + p^2 + 1$} & \multicolumn{2}{c||}{\parbox{3.25cm}{\centering $e_1e_2(e_4 - 1) + p^8 + p^4 + p^2 + 1$}} & \multicolumn{2}{c}{$2p^8 + p^4 + 1$} \\
     \hline
     \end{tabular}
  \caption{Values of $\Sq[J]$ for ancestral $J$}
     \label{tab:ex-N}
 \end{table}

We give details for the case $J = \{1,3\}$, which is the most complicated case. Recall that $\C_{\{1,3\}}$-classes are narrow columns inside a wide column.
Inside wide column $C_0$ we have one $\C_{\{1,3\}}$-class $C_{(p+1,\,0)}$ containing $p$ points of $B$ forming the set ${\color{red} B_{2,1}}$, and an additional $p^2-p+1$ points coming from pairwise distinct $\C_{\{1,3\}}$-classes $C_{(x,0)}$, namely $p+1$ points in the classes with $0\leq x \leq p$ forming ${\color{amber} B_1}$, and $p^2-2p$ points in the classes with $p+2\leq x\leq e_2$  forming ${\color{red} B_{2,2}}$. This explains the first two rows for case $J = \{1,3\}$ in Table \ref{tab:ex-N}.
Now consider an arbitrary wide column $C_z$ with $z\neq 0$. (There are $e_3-1=p^4-p^2$ of these.) The narrow column $C_{(0,z)}$ in $C_z$ contains one point in ${\color{blue} B_3}$ and $p^2$ points in ${\color{green}B_{4,1}}$, corresponding to the third  row for case $J = \{1,3\}$ in Table \ref{tab:ex-N}. Among the narrow columns $C_{(x,z)}$ in $C_z$ with $x\ne0$: for $1\leq x\leq 2p-1$, $C_{(x,z)}$ contains $p^2$
points of ${\color{green}B_{4,1}}$, corresponding to the fourth row for case $J = \{1,3\}$ in Table \ref{tab:ex-N}. Finally, for $2p\leq x\leq p^2+p-1$, $C_{(x,z)}$ contains $p^2-p$
points of ${\color{green}B_{4,2}}$, corresponding to the fifth row for case $J = \{1,3\}$ in Table \ref{tab:ex-N}. To compute $\Sq[\{1,3\}]$ from \eqref{eq:sum-squares} using the information in Table~\ref{tab:ex-N} obtained so far, we proceed as follows (and obtain the entry for $\Sq[\{1,3\}]$ in Table~\ref{tab:ex-N}):

\begin{align*}
\Sq[\{1,3\}]
&= 1 \cdot p^2 + (p^2 - p + 1) \cdot 1^2 + (p^4 - p^2)\big( (p^2 + 1)^2 + (2p-1)(p^2)^2 + (p^2 - p)(p^2 - p)^2\big) \\
&= p^2 + e_2 + (p^4 - p^2) \big( p^4 + 2p^2 + 1 + 2p^5 - p^4 + p^6 - 3p^5 + 3p^4 - p^3 \big) \\
&= p^2 + e_2 + (p^4 - p^2) \big( p^6 - p^5 + 3p^4 - p^3 + 2p^2 + 1 \big) \\
&= p^2 + e_2 + (p^4 - p^2) \big( (p^4 + p^2)(p^2 - p + 2) + 1 \big) \\
&= p^2 + e_2 + (p^8 - p^4)(p^2 - p + 2) + (p^4 - p^2) \\
&= e_2 + (e_4 - 1)(e_2 + 1) + p^4\\
&= e_4(e_2 + 1) + p^4-1
\end{align*}


Note that the right side of \eqref{2des-examples} is $|O_J|/v$ and this value together with the boundary $\Bdy{J}$ is given, for each non-empty ancestral subset  $J$, in Table \ref{tab:N-orb}.

\begin{table}[ht]
    \centering
    \begin{tabular}{lll}
    \hline
    $J$ & $\Bdy{J}$ & $|O_J|/v$ \\
    \hline\hline
    $\{1,3,4\}$ & $\{2\}$ & $e_2 - 1=p^2-p$  \\
    $\{2,3,4\}$ & $\{1\}$ & $e_1 - 1=p^2+p$ \\
    $\{1,3\}$ & $\{4\}$ & $(e_4 - 1)e_2=(p^8-p^4)(p^2-p+1)$ \\
    $\{3,4\}$ & $\{1,2\}$ & $(e_1 - 1)(e_2 - 1)=p^4-p^2$ \\
    $\{3\}$ & $\{1,4\}$ & $(e_1 - 1)(e_4 - 1)e_2=(p^2+p)(p^8-p^4)(p^2-p+1)$ \\
    $\{4\}$ & $\{3\}$ & $(e_3 - 1)e_1e_2=p^8-p^2$ \\
    \hline
    \end{tabular}
    \caption{Orbit sizes of $G$ on ordered pairs of distinct points}
    \label{tab:N-orb}
\end{table}

For each proper nonempty ancestral subset $J$, we compute the left side of \eqref{2des-examples}, and record the value in Table~\ref{tab:left-N}. Note that this uses $\Sq[I]=k=p^8+1$ (see Lemma~\ref{lem:array-sqs}). We see that in each case the left side of \eqref{2des-examples} is equal to the right side, listed in the last column of Table \ref{tab:N-orb}. Note that for $J=\{3,4\}$ and $J=\{3\}$ we may use results from other cases to shorten the computation. Thus equation \eqref{2des-examples} holds for each proper nonempty ancestral subset $J$, and hence  $\D = (\PP,B^G)$ is a $2$-design, by Theorem~\ref{t:gwp-design}.

 \begin{table}[ht]
    \centering
    \begin{tabular}{llll}
    \hline
    $J$ & $\Bdy{J}$ & $\sum_{S \subseteq \Bdy{J}} (-1)^{|S|} \Sq[J\cup S]$ & left side of \eqref{2des-examples} \\
    \hline
    $\{1,3,4\}$ & $\{2\}$ & $\Sq[\{1,3,4\}]-\Sq[I]$ & $p^2-p$ \\
     $\{2,3,4\}$ & $\{1\}$  & $\Sq[\{2,3,4\}]-\Sq[I]$ & $p^2+p$ \\
    $\{1,3\}$ & $\{4\}$ & $\Sq[\{1,3\}]-\Sq[\{1,3,4\}]$ & $(e_4 - 1)e_2$ \\
     $\{3,4\}$ & $\{1,2\}$ &$\Sq[\{3,4\}]-\Sq[\{1,3,4\}]-\Sq[\{2,3,4\}]+\Sq[I]$&$p^4-p^2$ \\
    $\{3\}$ & $\{1,2\}$ &$\Sq[\{3\}]-\Sq[\{1,3\}]-\Sq[\{3,4\}]+\Sq[\{1,3,4\}]$ &  $(e_1 - 1)(e_4 - 1)e_2$\\
    $\{4\}$ & $\{3\}$ &$\Sq[\{4\}]-\Sq[\{3,4\}]$ & $p^8-p^2$\\  
    \hline
    \end{tabular}
    \caption{Values of $\sum_{S \subseteq \Bdy{J}} (-1)^{|S|} \Sq[J\cup S]$}
    \label{tab:left-N}
\end{table}

\end{example}

\section{Declarations}
All three authors wrote and reviewed the main manuscript text, and C.A. prepared all figures.

{\bf Conflict of interest:} The authors have no competing interests to declare that are relevant to the content of this article.

{}

\end{document}